\newcommand{\ds}{\displaystyle}
\newcommand{\ben}{\begin{enumerate}}
\newcommand{\een}{\end{enumerate}}
\newcommand{\eq}[2][label]{\begin{equation}\label{#1}#2\end{equation}}
\newcommand{\av}[2]{\langle #1\rangle_{_{\scriptstyle #2}}}
\newcommand{\ave}[1]{\langle #1\rangle}
\newcommand{\ve}{\varepsilon}
\newcommand{\bel}[1]{\boldsymbol{#1}}
\newcommand{\df}{\stackrel{\mathrm{def}}{=}}
\newcommand{\ma}{Monge--Amp\`{e}re }
\newcommand{\Bf}{Bellman function}
\newcommand{\Bfs}{Bellman functions}
\newcommand{\BMO}{{\rm BMO}}
\newcommand{\Oe}{\Omega_\varepsilon}
\newcommand{\const}{{\rm const}}
\def\sgn{\operatorname{sgn}}
\newcommand{\rn}{\mathbb{R}^n}
\newtheorem{theorem}{Theorem}[section]
\newtheorem{lemma}[theorem]{Lemma}
\newtheorem{corollary}[theorem]{Corollary}
\newtheorem*{theorem*}{Theorem}{\bf}{\it}
\newtheorem*{proposition*}{Proposition}{\bf}{\it}
\newtheorem*{observation*}{Observation}{\bf}{\it}
\newtheorem*{lemma*}{Lemma}{\bf}{\it}
\theoremstyle{definition}
\theoremstyle{remark}
\newtheorem{remark}[theorem]{Remark}
\numberwithin{equation}{section}
\begin{document}

\title{Sharp $L^p$ estimates on $\BMO$}

\author{Leonid Slavin}
\address{University of Cincinnati}
\email{leonid.slavin@uc.edu}

\author{Vasily Vasyunin}
\address{St. Petersburg Department of the V.~A.~Steklov
Mathematical Institute, RAS}
\email{vasyunin@pdmi.ras.ru}
\thanks{L. Slavin's research supported in part by the NSF (DMS-1041763)}

\thanks{V. Vasyunin's research supported in part by RFBR (08-01-00723-a)}

\subjclass[2000]{Primary 42A05, 42B35, 49K20}

\keywords{BMO, norm equivalence, explicit Bellman function, \ma equation}

\date{May 26, 2010}

\begin{abstract}
We construct the upper and lower Bellman functions for the $L^p$
(quasi)-norms of BMO functions. These appear as solutions to a series of 
\ma boundary value problems on a non-convex plane domain. The knowledge of the \Bfs\
leads to sharp constants in inequalities relating average oscillations of BMO functions and various BMO norms.
\end{abstract}
\maketitle
\section{Introduction}
For a measurable set $E\subset\mathbb{R}^n$ with finite, non-zero Lebesgue measure $|E|,$  and a locally integrable real-valued function $\varphi,$ let
$\av{\varphi}E$ denote the average of $\varphi$ over $E,$
$$
\av{\varphi}E=\frac1{|E|}\int_E\varphi.
$$
For a function $\varphi,$ a cube $Q,$ and $p>0$, let $\|\varphi\|_{L^p(Q)}$ denote the $Q$-normalized $L^p$ (quasi-)norm of $\varphi,$
$$
\|\varphi\|_{L^p(Q)}=\av{|\varphi|^p}Q^{1/p}.
$$
Observe that $\|\varphi\|_{L^p(Q)}$ is increasing in $p:$
\eq[d0.2]{
\|\varphi\|_{L^{p_1}(Q)}\le\|\varphi\|_{L^{p_2}(Q)}
}
for $p_1\le p_2,$ with equality happening if and only if $\varphi=\const$
on $Q.$

Fix a cube $Q$ in $\rn$ and let $\BMO^p(Q)$ be the (factor-)space
\eq[d1]{
\BMO^p(Q)=\{\varphi\in L^1(Q)\colon
\av{|\varphi-\av{\varphi}J|^p}J\le C^p<\infty,\
\forall~\text{cube}~J\subset Q\},
}
with the smallest such $C$ being the corresponding norm
(quasi-norm for $0<p<1$),
\eq[b21]{
\|\varphi\|_{\BMO^p(Q)}=
\sup_{\text{cube}~J\subset Q}\av{|\varphi-\av{\varphi}J|^p}J^{1/p}.
}

It is known that all $p$-based norms defined by \eqref{b21} are
equivalent and so \eqref{d1} defines the same space for all $p>0.$
This fact is usually seen as a consequence of the John--Nirenberg
inequality, although using that inequality to prove it will produce
suboptimal constants of norm equivalence. One of the primary motivations
of this work is to quantify this equivalence precisely, in dimension $1.$
To this end, we relate all $\BMO^p$ norms to the $\BMO^2$ norm. 
The reason $\BMO^2$ norm plays a central role here is that it allows
us to take advantage of the self-duality of $L^2(Q).$ From now on, we
reserve the name $\BMO$ for $\BMO^2:$
$$
\BMO(Q)=\{\varphi\in L^1(Q)\colon\av{\varphi^2}J-\av{\varphi}J^2\le
C^2<\infty,\ \forall~\text{cube}~J\subset Q\}.
$$
Thus, we would like to find the best
constants $c_p,\,C_p$ in the double inequalities
\eq[b22]{
c_p\|\varphi\|_{\BMO(Q)}\;\le\!\!\sup_{\text{interval}~J\subset Q}
\av{|\varphi-\av{\varphi}J|^p}J^{1/p}\;\le\; C_p\|\varphi\|_{\BMO(Q)}\,,
\qquad p>0.
}
Some cases are trivial: \eqref{d0.2} implies that $C_p=1$ for $p\le 2,$
and $c_p=1$ for $p\ge 2$ (the equalities hold, for instance, for any
$\varphi$ with zero average and constant modulus on $Q$). 

To find $c_p$ and $C_p,$ we estimate, for any $\varphi\in\BMO(Q),$ the quantity $\av{|\varphi-\av{\varphi}Q|^p}Q$ 
(we will refer to this as the $p$-oscillation of $\varphi$ over $Q$) in terms of its $2$-oscillation
$\av{\varphi^2}Q-\av{\varphi}Q^2$ and $\|\varphi\|_{\BMO(Q)}.$ Our estimates are sharp for any $p>0$ 
and one can use them to estimate the $p$-oscillation of a function in terms of its $q$-oscillation and BMO norm,
for any $0<p,q<\infty.$ 
What is surprising is that, due to the nature of the optimizers in the $p\leftrightarrow2$
inequalities, we obtain {\it sharp} $p\leftrightarrow q$ inequalities,
whenever $p\in[1,2]$ and $q\in[2,\infty).$

The oscillation estimates immediately yield the norm equivalence statement~\eqref{b22}.
However, the norm equivalence itself may or may not be sharp; at this point we can only show sharpness for $p>2.$

The principal step in getting oscillation estimates is to obtain sharp two-sided inequalities for the $p$-th power of
$L^p(Q)$-norms of $\BMO$ functions, i.e. expressions of the form
$\av{|\varphi|^p}Q.$ It turns out that these quantities
are always finite, meaning that $\BMO(Q)\subset L^p(Q),$ as sets.
These sharp $L^p$ estimates on BMO are our main goal.

Following the template of the John--Nirenberg project \cite{sv},
we define the {\it upper and lower \Bfs} for the problem: for $p>0$
and $\ve>0,$ let
\eq[b23]{
\bel{B}_{\ve,p}(x)=\sup_{\varphi\in \BMO_{\ve}(Q)}\left\{
\av{|\varphi|^p}Q\colon \av{\varphi}Q=x_1,\av{\varphi^2}Q=x_2\right\},
}
\eq[b24]{
\;\bel{b}_{\ve,p}(x)\;=\inf_{\varphi\in \BMO_{\ve}(Q)}\left\{
\av{|\varphi|^p}Q\colon \av{\varphi}Q=x_1,\av{\varphi^2}Q=x_2\right\},
}
where $\BMO_\ve(Q)$ is the $\ve$-ball in $\BMO(Q),$
$$
\BMO_\ve(Q)=\{\varphi\in\BMO(Q)\colon\|\varphi\|_{\BMO(Q)}\le\ve\}.
$$
It easy to check that these functions are independent of the interval
$Q.$ Their domain is
$$
\Oe=\{x:x_1^2\le x_2\le x_1^2+\ve^2\}.
$$
Indeed, for every $\varphi\in\BMO_\ve(Q)$ and every subinterval $J$
of $Q$ the corresponding {\it Bellman point}
$(\av{\varphi}J,\av{\varphi^2}J)$ is in $\Oe:$ the first inequality
is just H\"{o}lder's inequality and the second one holds since
$\|\varphi\|_{\BMO(Q)}\le\ve.$ This is the same domain as in~\cite{sv},
and, as was the case there, each \Bf\ will satisfy a \ma equation on
$\Oe.$ However, unlike the \Bfs\ in~\cite{sv}, the functions defined
by~\eqref{b23} and~\eqref{b24} do not have additive homogeneity, while
the domain $\Oe$ does not allow for multiplicative homogeneity. Thus we
cannot hope to reduce the partial differential equations to ordinary
ones, as was done in the previous work. Instead, we construct special
foliations of $\Oe$ by straight-line characteristics along which each
\Bf\ must be linear. These foliations, different for various ranges
of $p,$ allow us to solve the PDE, thus obtaining the \Bfs, and,
simultaneously, construct optimizers in the inequalities being proved.

Having introduced the main objects of study, let us say a bit about
the method. The term ``Bellman functions'' alludes to similar extremal
constructs in the dynamic programming of R.~Bellman~\cite{be}. There
are deep parallels between the Bellman frameworks of optimal stochastic control and harmonic
analysis (see, for example,~\cite{ntv3} and~\cite{vol}),
although we make no use of those connections here. In the mid- to
late 1980s, R.~Burkholder~(\cite{b1,b2}) started using specially
designed functions with delicate size and concavity properties to
prove sharp inequalities for martingales by induction on scales.
In the 1990s, the method took its modern form in the work of
F.~Nazarov, S.~Treil, and A.~Volberg, starting with~\cite{ntv1},
\cite{nt}, and \cite{ntv2}. However, the first exact \Bfs, as explicit
solutions of extremal problems such as~\eqref{b23} and~\eqref{b24},
did not appear until~\cite{v1}. Bellman analysis on $\BMO$ originated
with~\cite{sv} and continued in~\cite{v2}. Other notable explicit
\Bfs\ appeared in the work of A.~Melas and co-authors (see, for
instance,~\cite{melas1,melas2,mn}), although those functions were
found using combinatorial analysis of the operator in question,
the dyadic maximal function, as opposed to solving the Bellman
partial differential equation. The \Bfs\ for the maximal operator
were taken up again in~\cite{sst} and~\cite{ssv}. Those works,
together with~\cite{vv2}, initiated the study of the relationship
among explicit \Bfs, the \ma geometry of Bellman domains, and the
structure of optimizers in the corresponding inequalities. Most
recently, that line of investigation continued in~\cite{vv1}. 
The current work also presents that unified vision.

How do we proceed from estimates on $\av{|\varphi|^p}Q$ to the norm
equivalence statement \eqref{b22}? By the definitions of
$\bel{B}_\ve$ and $\bel{b}_\ve,$ we have, for every $\varphi\in\BMO_\ve(Q)$
and every subinterval $I$ of $Q,$
$$
\bel{b}_{\ve,p}(\av{\varphi}I,\av{\varphi^2}I)\le
\av{|\varphi|^p}I\le\bel{B}_{\ve,p}(\av{\varphi}I,\av{\varphi^2}I).
$$
Replacing $\varphi$ with $\varphi-\av{\varphi}I$ gives sharp oscillation estimates:
$$
\bel{b}_{\ve,p}(0,\av{\varphi^2}I-\av{\varphi}I^2)\le
\av{|\varphi-\av{\varphi}I|^p}I\le\bel{B}_{\ve,p}(0,\av{\varphi^2}I-
\av{\varphi}I^2).
$$
Analyzing each inequality separately, we get the desired norm inequalities:
$$
\bel{b}_{\|\varphi\|,p}(0,\|\varphi\|)\le
\sup_{\text{interval}~I\subset Q}\av{|\varphi-\av{\varphi}I|^p}I\le
\bel{B}_{\|\varphi\|,p}(0,\|\varphi\|),
$$
where $\|\cdot\|\df\|\cdot\|_{\BMO(Q)}.$

The functions $\bel{B}_{\ve,p}$ and $\bel{b}_{\ve,p}$ can be viewed
as special cases in a more general framework. Namely, take a function
$f$ on $\mathbb{R}$ and define, formally, the \Bfs\
\eq[b23.1]{
\bel{B}_{\ve,f}(x)=\sup_{\varphi\in \BMO_{\ve}(Q)}\left\{
\av{f(\varphi)}I\colon\av{\varphi}I=x_1,\av{\varphi^2}I=x_2\right\}
}
and
\eq[b24.1]{
\quad\bel{b}_{\ve,f}(x)\;=\inf_{\varphi\in \BMO_{\ve}(Q)}\left\{
\av{f(\varphi)}I\colon\av{\varphi}I=x_1,\av{\varphi^2}I=x_2\right\}.
}
(Here we explicitly allow for the possibility that one or both of
these functions take on infinite values.) We will see that such a
general view is beneficial: we will develop several canonical building
blocks, each defined in a sub-domain of $\Oe;$ these blocks, when
appropriately arranged and glued, produce the functions~\eqref{b23.1},
\eqref{b24.1} for various choices of $f,$ including the power function.
Equally important, the optimizers in the \Bfs\ --- those functions
$\varphi$ on which supremum or infimum is attained --- turn out to be
determined locally by the canonical blocks, rather than by what specific $f$
is being considered. In a very tangible sense, many $\BMO$ inequalities
have the same optimizers.

Let us outline several important choices of $f:$ $f(s)=|s|^p$ yields
definitions~\eqref{b23} and~\eqref{b24}; $f(s)=e^s$ produces the
Bellman setup for the integral John--Nirenberg inequality from~\cite{sv};
since $\lim_{p\to0}\av{|\varphi|^p}Q^{1/p}=e^{\av{\log |\varphi|}Q},$
the choice $f(s)=\log|s|$ gives the appropriate limiting setup
for~\eqref{b23} and~\eqref{b24}, although we will see that the resulting
``$\BMO^0,$'' is not, in fact, $\BMO.$

There are at least two more choices of importance:
$f(s)=\chi_{(-\infty,-\lambda]\cup[\lambda,\infty)}(s)$ for
$\lambda\ge0$ and $f(s)=e^{|s|}.$ The former yields the setup for
the classical, weak-form John--Nirenberg inequality and the latter,
for the two-sided integral John--Nirenberg inequality.  Each of these
two cases requires a slight modification of our building blocks.
The first was considered in~\cite{v2}, in a much more limited context,
and the second will be considered elsewhere.

What are the assumed conditions on $f?$ Since our focus in the present
work is on two-sided inequalities, we will assume that $f$ is even.
In order to simplify exposition we will also assume that $f$ is non-negative and smooth, except,
possibly, at $0.$

Although all definitions in this section are valid in any dimension,
at present we are only able to find the \Bfs\ in the one-dimensional
case, where all cubes in the definition of $\BMO$ are intervals.
We can obtain meaningful dimension-dependent estimates of the
norm-equivalence constants in higher dimensions, but it is apart from
our main interest here, which is in sharpness and explicit \Bfs.
In fact, we hold out the possibility that the \Bfs\ --- and so the norm
estimates --- are {\it dimension-free}, which would have major
implications for analysis on $\BMO$ and related function classes
(such as $A_p$).  At this time, we cannot show it, since a key geometric
ingredient in our proofs works only in dimension $1;$ we hope to be able
to give a definitive answer for higher dimensions in the future.

The geometry of \ma foliations plays a central role in the construction
of \Bfs\ and their optimizers. As such, it is given a central role in
our exposition. The picture of a foliation concisely captures the
nature of the extremal problem at hand, and it is those foliations,
first built locally and then carefully glued together, that we would
like the reader to remember. While we provide precise algebraic
descriptions of the \Bfs, our proofs often appeal to their geometric
nature. For example, when building optimizers, it is certainly possible
to show that a given function is in $\BMO_\ve$ by a direct calculation.
However, it is more geometrically meaningful --- and often simpler --- to show that
all of its Bellman points are in $\Oe.$

Lastly, proving sharp norm estimates, while a lofty goal, does not
require one to know the origin of the \Bfs\ or of their optimizers;
a relatively straightforward verification that these are, in fact,
optimal would suffice. However, we choose to present their construction
in considerable detail. This serves our second major goal: to provide
a complete account of the modern Bellman--\ma approach to problems with
non-convex Bellman domains. Many other traditional harmonic analysis
questions give rise to such domains and much of what follows should
be applicable there.

The paper is organized as follows: in Section~\ref{inequalities}, we
state the sharp inequalities proving which was our main motivation;
in Section~\ref{equations}, we derive the boundary value problems
our \Bfs\ should solve and outline the geometric approach to
finding the solutions; in Section~\ref{local} we construct local Bellman {\it candidates} for several subdomains of $\Oe;$ these are glued together to produce global solutions in Section~\ref{global};
in Sections~\ref{induction} and~\ref{converse} we use induction on scales and optimizers to prove that the global candidates are, in fact, the true \Bfs; in
Section~\ref{proofs} we provide the proofs of the inequalities from
Section~\ref{inequalities}; finally, in Section~\ref{other}, we briefly consider several additional
choices of $f$ in~\eqref{b23.1} and \eqref{b24.1} and state the
corresponding \Bfs.
\section{Sharp inequalities}
\label{inequalities}
Although the main results of this work are the explicit expressions for
the functions~\eqref{b23} and~\eqref{b24} for various ranges of $p,$
these are relatively complicated and stated in Section~\ref{global},
after the relevant notation has been introduced and various blocks
that comprise these expressions have been developed. In this section,
we state the immediate consequences of our knowing the Bellman functions:
the appropriate sharp inequalities for $\BMO.$ Since we know both 
the upper and lower functions, we naturally get two-sided inequalities; even though some are elementary,
they are included for symmetry and to emphasize their source.
\begin{theorem}
\label{th1.0}
For an interval $Q$ and any $\varphi\in\BMO(Q)$ such that $\|\varphi\|_{\BMO(Q)}\ne0$ we have:
$$
\begin{array}{ll}
\text{if}~0< p\le1,
& 2^{p-2}\|\varphi\|^{p-2}_{\BMO(Q)}(\av{\varphi^2}Q-\av{\varphi}Q^2)\le
\av{|\varphi-\av{\varphi}Q|^p}Q\le\Big(\av{\varphi^2}Q-\av{\varphi}Q^2\Big)^{p/2};
\\
&\\
\text{if}~1\le p\le 2,
&\frac p2\,\Gamma(p)\|\varphi\|^{p-2}_{\BMO(Q)}(\av{\varphi^2}Q-\av{\varphi}Q^2)\le
\av{|\varphi-\av{\varphi}Q|^p}Q\le\Big(\av{\varphi^2}Q-\av{\varphi}Q^2\Big)^{p/2};
\\
&\\
\text{if}~2\le p<\infty ,
&\Big(\av{\varphi^2}Q-\av{\varphi}Q^2\Big)^{p/2}\le
\av{|\varphi-\av{\varphi}Q|^p}Q
\le
\frac p2\,\Gamma(p)\|\varphi\|^{p-2}_{\BMO(Q)}(\av{\varphi^2}Q-\av{\varphi}Q^2),
\end{array}
$$
and these inequalities are sharp and attainable.
\end{theorem}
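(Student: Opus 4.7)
The plan is to deduce Theorem~\ref{th1.0} as an immediate specialization of the explicit upper and lower \Bfs\ $\bel{B}_{\ve,p}$ and $\bel{b}_{\ve,p}$ (to be constructed in Sections~\ref{local}--\ref{global}) to the vertical segment $x_1=0$. Setting $\sigma:=\av{\varphi^2}Q-\av{\varphi}Q^2$ and $\ve:=\|\varphi\|_{\BMO(Q)}$, applying definitions \eqref{b23}--\eqref{b24} to $\varphi-\av{\varphi}Q$ gives automatically
\[
\bel{b}_{\ve,p}(0,\sigma)\le\av{|\varphi-\av{\varphi}Q|^p}Q\le\bel{B}_{\ve,p}(0,\sigma),
\]
so the theorem reduces to identifying these two boundary values in each of the three ranges of $p$. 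Attainability will then be witnessed by the explicit optimizers produced along the \ma characteristics in that construction.

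For the ``trivial'' side of each case --- the $\sigma^{p/2}$ bound, which is an upper bound when $0<p\le 2$ and a lower bound when $p\ge 2$ --- I would give a standalone argument that does not require the Bellman machinery: Jensen's inequality applied to the function $t\mapsto t^{p/2}$ on the probability space $(Q,\,dx/|Q|)$ yields $\av{|\varphi-\av{\varphi}Q|^p}Q\le\sigma^{p/2}$ for $p\in(0,2]$ and the reverse inequality for $p\ge 2$. Equality holds whenever $|\varphi-\av{\varphi}Q|$ is constant on $Q$, for instance for a $\pm\sigma^{1/2}$-valued step function on the two halves of $Q$, which is admissible as soon as $\sigma^{1/2}\le\ve$. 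On the Bellman side this corresponds to evaluating the ``easy'' \Bf\ on the leaf of its \ma foliation that reaches the lower parabolic boundary $x_2=x_1^2$.

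For the BMO-dependent side of each case, the constants are read off from the explicit formulas for $\bel{b}_{\ve,p}(0,\sigma)$ when $0<p\le 2$ and for $\bel{B}_{\ve,p}(0,\sigma)$ when $p\ge 2$. In each regime the relevant \Bf\ is linear along the straight-line characteristic through $(0,\sigma)$, so its value there is a convex combination determined by the boundary data at the characteristic's endpoints on $x_2=x_1^2+\ve^2$; the constants $2^{p-2}\ve^{p-2}$ (for $0<p\le 1$) and $\tfrac{p}{2}\Gamma(p)\ve^{p-2}$ (for $1\le p\le 2$ and $p\ge 2$) drop out of this evaluation, and the transition at $p=1$ reflects the change of foliation type described in Section~\ref{local}. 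The real difficulty lies not in this bookkeeping step but in justifying that the explicit candidates are the true extremals, which is the content of the induction-on-scales and optimizer arguments in Sections~\ref{induction}--\ref{converse}; once those are in place, the theorem together with its sharpness and attainability claim is a one-line corollary.
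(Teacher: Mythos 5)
Your proposal is correct and follows essentially the same route as the paper: the paper's proof likewise replaces $\varphi$ by $\varphi-\av{\varphi}Q$, reads the three pairs of constants off the explicit values $M_{\ve,p}(0,x_2)=P_{\ve,p}(0,x_2)=x_2^{p/2}$, $N_{\ve,p}(0,x_2)=\frac p2\Gamma(p)\ve^{p-2}x_2$, $R_{\ve,p}(0,x_2)=2^{p-2}\ve^{p-2}x_2$ supplied by Theorem~\ref{imp3}, and establishes attainability with the explicit optimizers $\varphi_1,\varphi_2,\varphi_3$ built at the point $(0,\ve^2)$. Your only deviation is handling the $\sigma^{p/2}$ side by Jensen's inequality rather than reading it from $M$ and $P$, which is a harmless elementary shortcut.
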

As a straightforward corollary of this theorem we obtain inequalities relating $\BMO^2$ and $\BMO^p$ norms; for $p>2$ these are the best possible.
\begin{theorem}
\label{th1.1}
For an interval $Q$ and $\varphi\in\BMO(Q),$ we have:
$$
\begin{array}{ll}
\text{if}~0< p\le1,
& 2^{1-2/p}\|\varphi\|_{\BMO(Q)}\le
\|\varphi\|_{\BMO^p(Q)}\le\|\varphi\|_{\BMO(Q)};
\\
&\\
\text{if}~1\le p\le 2,
&\left(\frac p2\,\Gamma(p)\right)^{1/p}\|\varphi\|_{\BMO(Q)}\le
\|\varphi\|_{\BMO^p(Q)}\le\|\varphi\|_{\BMO(Q)};
\\
&\\
\text{if}~2\le p<\infty ,
&\|\varphi\|_{BMO(Q)}\le\|\varphi\|_{\BMO^p(Q)}\le
\left(\frac p2\,\Gamma(p)\right)^{1/p}\|\varphi\|_{\BMO(Q)}.
\end{array}
$$
The right-hand side inequalities for $p<2$ and both left- and right-hand side inequalities for $p>2$ are sharp and attainable.
\end{theorem}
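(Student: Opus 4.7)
The plan is to extract Theorem~\ref{th1.1} from Theorem~\ref{th1.0} by simply taking the supremum over subintervals and invoking the defining identity
\[
\sup_{J\subset Q}\bigl(\av{\varphi^2}J-\av{\varphi}J^2\bigr)=\|\varphi\|_{\BMO(Q)}^2,
\]
which is tautological from the definition of the $\BMO$ norm.  The whole theorem is thus a direct consequence of the one-interval estimates, and the only real content lies in sharpness.

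For the upper bounds: for any subinterval $J\subset Q$, write $\sigma^2(J)=\av{\varphi^2}J-\av{\varphi}J^2$ and note $\sigma^2(J)\le\|\varphi\|_{\BMO(Q)}^2$.  When $p\le 2$, Theorem~\ref{th1.0} yields $\av{|\varphi-\av{\varphi}J|^p}J\le\sigma^2(J)^{p/2}\le\|\varphi\|_{\BMO(Q)}^p$; raising to the power $1/p$ and taking the supremum gives the right-hand inequality.  When $p\ge 2$, the upper estimate in Theorem~\ref{th1.0} is linear in $\sigma^2(J)$ with coefficient $\tfrac{p}{2}\Gamma(p)\|\varphi\|^{p-2}_{\BMO(Q)}$, so again the supremum over $J$ produces the claimed constant.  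For the lower bounds the same mechanism is used in reverse: when $p\ge 2$ the lower estimate is $\sigma^2(J)^{p/2}$ and taking the sup gives exactly $\|\varphi\|_{\BMO(Q)}^p$; when $p<2$ it is linear in $\sigma^2(J)$, and the sup over $J$ of $\sigma^2(J)$ is $\|\varphi\|_{\BMO(Q)}^2$, yielding the stated constants $2^{1-2/p}$ and $(\tfrac{p}{2}\Gamma(p))^{1/p}$ after the $1/p$ root.

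The harder part is verifying sharpness for the four inequalities that the theorem claims are attainable.  For the right-hand side when $p\le 2$ this is elementary: any $\varphi$ of constant modulus and zero mean on $Q$ saturates $\av{|\varphi|^p}Q=(\av{\varphi^2}Q)^{p/2}$ and has $\sigma^2(Q)=\|\varphi\|_{\BMO(Q)}^2$ (for example $\varphi=\pm\ve$ on a bisection of $Q$).  For the $p\ge 2$ case, sharpness is inherited from Theorem~\ref{th1.0}: if $\varphi$ is an extremal function in the corresponding one-interval inequality with $\av{\varphi}Q=0$ and $\av{\varphi^2}Q=\|\varphi\|_{\BMO(Q)}^2$, then the chain $\av{|\varphi|^p}Q\le\|\varphi\|_{\BMO^p(Q)}^p\le\tfrac{p}{2}\Gamma(p)\|\varphi\|^p_{\BMO(Q)}\cdot\tfrac{\sigma^2(Q)}{\|\varphi\|^2_{\BMO(Q)}}=\av{|\varphi|^p}Q$ collapses to equality, giving sharpness of the right-hand inequality; the left-hand bound is obtained analogously by taking $\varphi=\pm\ve$ on a bisection.

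The main obstacle is locating genuine extremizers for the $p\ge 2$ case, i.e.~functions $\varphi$ on $Q$ for which Theorem~\ref{th1.0} is itself an equality and which simultaneously saturate $\sigma^2(Q)=\|\varphi\|^2_{\BMO(Q)}$.  These are precisely the optimizers built from the \ma foliations constructed later in the paper; invoking them here is legitimate since Theorem~\ref{th1.0} is stated as sharp and attainable, and any interior optimizer $\varphi^*$ of the upper Bellman function at the point $(0,\|\varphi^*\|^2_{\BMO(Q)})$ on the upper boundary of $\Oe$ does the job.  The absence of a sharpness claim for the left-hand inequality when $p<2$ reflects exactly the point already noted in the introduction: the bound in Theorem~\ref{th1.0} is linear in $\sigma^2$, but the function $\varphi$ that maximizes $\sigma^2(J)$ over $J\subset Q$ need not be the one that maximizes the $p$-oscillation, so the two suprema involved in the norm estimate cannot in general be attained simultaneously.
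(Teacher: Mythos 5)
Your proposal is correct and follows essentially the same route as the paper: the norm inequalities are obtained by evaluating the sharp one-interval estimates at each subinterval $J\subset Q$ and taking suprema (the paper phrases this directly through the monotonicity and continuity of $\bel{B}_{\ve,p}(0,\cdot)$ and $\bel{b}_{\ve,p}(0,\cdot)$ with $\ve=\|\varphi\|_{\BMO(Q)}$, which automatically absorbs the passage from $\|\varphi\|_{\BMO(J)}$ to $\|\varphi\|_{\BMO(Q)}$ that your use of Theorem~\ref{th1.0} on $J$ handles via the sign of $p-2$), and sharpness is certified exactly as in the paper, with the two-valued step function $\varphi_1$ for the right-hand bound when $p<2$ and the left-hand bound when $p>2$, and with the optimizer at the boundary point $(0,\ve^2)$ (the function $\varphi_2$ of Section~\ref{proofs}) for the right-hand bound when $p>2$. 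Your closing remark on why no sharpness is claimed for the left-hand inequality when $p<2$ likewise mirrors the paper's own remark.
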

\begin{remark}
At this point we do not know if the left-hand inequalities for $p<2$ are sharp.
\end{remark}
Of course, Theorem~\ref{th1.1} allows us to relate different $\BMO^p$
norms bypassing $\BMO^2,$ but the resulting inequalities may no longer be sharp. 
However, if we instead use Theorem~\ref{th1.0} and our knowledge of the optimizers in its statements, we get the best possible inequalities relating $p$-oscillations for certain ranges of the parameter $p.$ Specifically, we have
\begin{theorem}
\label{th2.1}
Fix an interval $Q$ and numbers $p_1\in[1,2]$ and $p_2\in[2,\infty).$ Then, for any $\varphi\in\BMO(Q),$ we have
$$
\av{|\varphi-\av{\varphi}Q|^{p_1}}Q^{p_2/p_1}\le\av{|\varphi-\av{\varphi}Q|^{p_2}}Q\le\frac{p_2\Gamma(p_2)}{p_1\Gamma(p_1)}\|\varphi\|^{p_2-p_1}_{\BMO(Q)}\av{|\varphi-\av{\varphi}Q|^{p_1}}Q,
$$
and these inequalities are sharp and attainable.
\end{theorem}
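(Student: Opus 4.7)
The plan is to derive both inequalities from Theorem~\ref{th1.0}, together with elementary monotonicity for the left one, and then to transfer sharpness through the resulting chain.

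For the left inequality I would invoke H\"{o}lder's inequality with exponent $p_2/p_1\ge1$ applied to $\psi\df\varphi-\av{\varphi}Q$; this gives $\av{|\psi|^{p_1}}Q^{p_2/p_1}\le\av{|\psi|^{p_2}}Q$ directly (it is exactly the monotonicity~\eqref{d0.2}). Equality is achieved by any $\varphi$ for which $|\varphi-\av{\varphi}Q|$ is a.e.\ constant on $Q$, e.g.\ $\varphi(x)=c\,\sgn(x-x_0)$ with $x_0$ the midpoint of $Q$; such a $\varphi$ manifestly lies in $\BMO(Q)$ and saturates the inequality.

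For the right inequality I would chain two bounds from Theorem~\ref{th1.0}. The upper estimate in the range $p_2\in[2,\infty)$ reads
$$
\av{|\psi|^{p_2}}Q\le\tfrac{p_2}{2}\Gamma(p_2)\,\|\varphi\|_{\BMO(Q)}^{p_2-2}\bigl(\av{\varphi^2}Q-\av{\varphi}Q^2\bigr),
$$
while the lower estimate in the range $p_1\in[1,2]$, rearranged, reads
$$
\av{\varphi^2}Q-\av{\varphi}Q^2\le\tfrac{2}{p_1\Gamma(p_1)}\,\|\varphi\|_{\BMO(Q)}^{2-p_1}\av{|\psi|^{p_1}}Q.
$$
Multiplying these two inequalities produces the desired constant $p_2\Gamma(p_2)/\bigl(p_1\Gamma(p_1)\bigr)$ and the power $\|\varphi\|_{\BMO(Q)}^{p_2-p_1}$.

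The main obstacle is sharpness of the right inequality: one must exhibit some $\varphi\in\BMO(Q)$ (or an extremizing sequence) that saturates \emph{both} of the Theorem~\ref{th1.0} bounds used above \emph{simultaneously}. As foreshadowed in the introduction, this is possible precisely because the explicit extremizers constructed in the main sections via Monge--Amp\`{e}re foliations of $\Oe$ coincide across the two ranges: the optimizer for the upper Bellman function $\bel{B}_{\ve,p_2}$ with $p_2\ge 2$ agrees with the optimizer for the lower Bellman function $\bel{b}_{\ve,p_1}$ with $p_1\in[1,2]$. My plan is to read this common extremizer off the foliation picture on the relevant subdomain of $\Oe$ (the portion of $\Oe$ foliated by the same family of linearity characteristics for both $p_1$ and $p_2$) and verify the two required equalities by direct computation along that family. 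Once the common optimizer is identified, both links in the chain tighten at once, and so does the right inequality of the theorem.
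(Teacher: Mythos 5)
Your proposal is correct and follows essentially the paper's own route: the right inequality is obtained by chaining the $p_2$-range upper estimate and the $p_1$-range lower estimate of Theorem~\ref{th1.0} through the $2$-oscillation $\av{\varphi^2}Q-\av{\varphi}Q^2$, and its sharpness rests on one function tightening both links at once --- in the paper this common extremizer is exactly the logarithmic step function $\varphi_2$ of \eqref{v1}, which saturates the $p\ge2$ upper bound and the $1\le p\le2$ lower bound simultaneously, while $\varphi_1$ attains the left (H\"older) inequality. Your observation that the optimizers coincide across the two ranges is precisely the paper's mechanism (the candidate $N_{\ve,p}$ is $\bel{B}_{\ve,p}$ for $p\ge2$ and $\bel{b}_{\ve,p}$ for $1\le p\le2$, so both share the optimizer at $(0,\ve^2)$), and the required equalities for $\varphi_2$ were already computed in the proof of Theorem~\ref{th1.0}.
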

As mentioned in the introduction, in this paper we do not attempt to
prove sharp two-sided John--Nirenberg inequality, that is we do not find
the Bellman functions~\eqref{b23.1} and \eqref{b24.1} for $f(s)=e^{|s|}.$
However, because $e^{|s|}=1+|s|+\sum_{k=2}^\infty\frac{|s|^k}{k!}$ and
{\it all} upper Bellman functions $\bel{B}_{p,\ve}$ for $p\ge2$ turn
out to have the same optimizers, we can sharply estimate
$$
\av{e^{|\varphi-\av{\varphi}{\scriptscriptstyle Q}|}}Q-
\av{|\varphi-\av{\varphi}Q|}Q.
$$
Coupling the resulting estimate with the inequality $\av{|\varphi-\av{\varphi}Q|}Q\le\|\varphi\|_{\BMO(Q)},$  we obtain the following ``almost sharp,''
two-sided integral John--Nirenberg inequality:
\begin{theorem}
\label{th3}
Take an interval $Q$ and let $\varphi\in\BMO(Q)$ be such that
$\|\varphi\|=\|\varphi\|_{\BMO(Q)}<1.$ Then we have
$$
\av{e^{|\varphi-\av{\varphi}{\scriptscriptstyle Q}|}}Q\le C(\|\varphi\|).
$$
Here the bound $1$ on $\|\varphi\|_{\BMO}$ is sharp and the best
\textup(smallest\textup) value of $C(\|\varphi\|)$ satisfies
\eq[ss6]{
\frac{1-\frac{\|\varphi\|}2}{1-\|\varphi\|}\le C(\|\varphi\|)\le
\frac{1-\frac{\|\varphi\|^2}2}{1-\|\varphi\|}.
}
\end{theorem}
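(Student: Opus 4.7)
The plan is to use Theorem~\ref{th1.0} termwise in the Taylor expansion of $e^{|\cdot|}$ and read off both halves of \eqref{ss6} from the resulting series. After replacing $\varphi$ by $\varphi-\av{\varphi}Q$ (which alters neither $\|\varphi\|_{\BMO(Q)}$ nor the quantity being estimated), we may assume $\av{\varphi}Q=0$; set $\ve=\|\varphi\|$ and $\sigma^2=\av{\varphi^2}Q\le\ve^2$. Since every summand is non-negative, Tonelli yields
$$
\av{e^{|\varphi|}}Q=1+\av{|\varphi|}Q+\sum_{k=2}^{\infty}\frac{\av{|\varphi|^k}Q}{k!}.
$$

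For the upper bound I would apply the third case of Theorem~\ref{th1.0} with $p=k\ge 2$ to get $\av{|\varphi|^k}Q\le\frac{k}{2}\Gamma(k)\,\ve^{k-2}\sigma^2\le\frac{k!}{2}\ve^k$, and use $\av{|\varphi|}Q\le\sigma\le\ve$. Summing the resulting geometric series gives
$$
\av{e^{|\varphi|}}Q\le 1+\ve+\frac{\ve^2}{2(1-\ve)}=\frac{1-\ve^2/2}{1-\ve},
$$
the right-hand side of \eqref{ss6}. Sharpness of the constraint $\ve<1$ is immediate: evaluated along the optimizers of Theorem~\ref{th1.0} the same series diverges once $\ve\ge 1$.

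For the lower bound on $C(\ve)$ I would exploit the fact emphasized in the introduction that the upper Bellman functions $\bel{B}_{k,\ve}$ with $k\ge 2$ share a common extremal family. One can therefore select $\varphi_n\in\BMO_\ve(Q)$ with $\av{\varphi_n}Q=0$, $\av{\varphi_n^2}Q\to\ve^2$, and, simultaneously for every $k\ge 2$, $\av{|\varphi_n|^k}Q\to\frac{k!}{2}\ve^k$. The explicit form of these optimizers, built in the body of the paper, also saturates the lower inequality of Theorem~\ref{th1.0} at $p=1$, forcing $\av{|\varphi_n|}Q\to\ve/2$. Dominated convergence in the exponential series (with uniform majorant $\ve^k/2$ from the previous paragraph) then yields
$$
\av{e^{|\varphi_n|}}Q\;\longrightarrow\;1+\frac{\ve}{2}+\frac{\ve^2}{2(1-\ve)}=\frac{1-\ve/2}{1-\ve},
$$
which is the left-hand inequality in \eqref{ss6}.

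The upper bound is routine. The substantive step---and the main obstacle---is the lower bound, which rests on two structural properties of the optimizers: (i) a single sequence simultaneously saturates every $k\ge 2$ upper estimate in Theorem~\ref{th1.0}, and (ii) that same sequence also saturates the $p=1$ lower estimate, so that its first moment tends to $\ve/2$ rather than to $\ve$. Both properties follow from the explicit geometry of the extremal foliation constructed elsewhere in the paper. Without this simultaneity one could not close the gap between the two sides of \eqref{ss6}; with it in hand, the lower bound reduces to the algebraic summation above.
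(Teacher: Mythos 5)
Your proposal is correct and takes essentially the same route as the paper: the upper bound is obtained exactly as in the paper, by expanding $e^{|\cdot|}$ and applying the sharp estimates of Theorem~\ref{th1.0} (equivalently $\bel{B}_{\ve,k}(0,\ve^2)=\frac{k!}2\ve^k$ for $k\ge2$ and $\av{|\varphi-\av{\varphi}Q|}Q\le\ve$) termwise, and the lower bound and the sharpness of the bound $1$ come from the same logarithmic optimizer ($\varphi_2$ in~\eqref{v1}), which indeed saturates all the $k\ge2$ upper estimates and the $p=1$ lower estimate simultaneously. The only cosmetic difference is that the paper evaluates $\av{e^{|\varphi_2-\av{\varphi_2}Q|}}Q$ directly as an integral, yielding $\frac{1-\ve/2}{1-\ve}$ exactly, whereas you recover the same value by summing the known moments $\av{|\varphi_2|^k}Q=\tfrac12\Gamma(k+1)\ve^k$ with a dominated-convergence justification.
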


By ``sharp'' we mean that there exist functions $\varphi$ with norm
$1$ for which the inequality fails. The reader can compare this result
with the one in~\cite{sv}, where the sharp {\it one-sided}
John--Nirenberg inequality was proved. One can get a sub-optimal
two-sided inequality from a one-sided one by simply doubling the
constant. If we do that with the result in~\cite{sv}, we get
$$
C(\|\varphi\|)\le\frac{2e^{-\|\varphi\|}}{1-\|\varphi\|},
$$
which is worse than in \eqref{ss6}.

It will be easy to prove all the inequalities stated in this section
after the explicit \Bfs\ \eqref{b23} and \eqref{b24}, as well as their
optimizers, have been found. We provide the proofs in
Section~\ref{proofs}.
\section{Boundary value problems and \ma foliations}
\label{equations}
\subsection{The equations and boundary conditions}
Take an interval $I$ and split it into two non-intersecting subintervals:
$I=I_-\cup I_+.$ For any sufficiently integrable function $\varphi$ on
$I,$
$$
\av{f(\varphi)}I=\alpha_-\av{f(\varphi)}{I_-}+
\alpha_+\av{f(\varphi)}{I_+},
$$
where $\alpha_\pm=|I_\pm|/|I|.$
Fix two points $x^-\!,\,x^+\in\Oe$ such that
$\alpha_-x^-+\alpha_+x^+\in\Oe$ and consider a sequence of functions
$\{\varphi_n\}$ such that $\varphi_n\in\BMO_\ve(I_-)\cap\BMO_\ve(I_+),$
$(\av{\varphi_n}{I_\pm},\av{\varphi_n^2}{I_\pm})=x^\pm,$ and
$\lim_{n\to\infty}\av{f(\varphi_n)}{I^\pm}=\bel{B}_{\ve,f}(x^\pm).$
This gives
$$
\lim_{n\to\infty}\av{f(\varphi_n)}I=
\alpha_-\bel{B}_{\ve,f}(x^-)+\alpha_+\bel{B}_{\ve,f}(x^+).
$$
If each $\varphi_n$ could be chosen so that $\varphi_n\in\BMO_\ve(I),$
then we could conclude that
\eq[f1]{
\bel{B}_{\ve,f}(\alpha_-x^-+\alpha_+x^+)\ge
\alpha_-\bel{B}_{\ve,f}(x^-)+\alpha_+\bel{B}_{\ve,f}(x^+).
}
Although $\alpha_-x^-+\alpha_+x^+\in\Oe,$ in general
$$
\BMO_\ve(I)\subsetneq\BMO_\ve(I_-)\cap\BMO_\ve(I_+)\cap
\{\varphi\colon\av{\varphi^2}I -\av{\varphi}I^2\le\ve^2\},
$$
which is a major difference between continuous and dyadic $\BMO.$
We will, nonetheless, enforce condition~\eqref{f1} for the upper \Bf\
candidate $B$ and the converse inequality  for the lower candidate $b.$
Thus, we look for functions $B$ and $b$ with the property that for all pairs
of points $x^\pm\in\Oe$ such that the whole line segment $[x^-\!,\,x^+]$
is in $\Oe,$ we have
$$
B(\alpha_-x^-+\alpha_+x^+)\ge\alpha_-B(x^-)+\alpha_+B(x^+)
$$
and
$$
b(\alpha_-x^-+\alpha_+x^+)\le\alpha_-b(x^-)+\alpha_+b(x^+),
$$
for all $\alpha_\pm>0$ with $\alpha_++\alpha_-=1.$ In other words,
we look for $B$ and $b$ that are concave and, respectively, convex
on any convex portion of $\Oe.$ We will refer to such functions as {\it locally concave} and
{\it locally convex}, respectively. If we also assume sufficient
differentiability on $B$ and $b,$ we get differential analogs of
these finite-difference inequalities:
$$
-\frac{d^2B}{d x^2}\ge0,\qquad \frac{d^2b}{d x^2}\ge0\qquad\text{in}~\Oe.
$$
In yet another restriction, the way \Bf\ candidates are used in
subsequent proofs suggests that we need to require that the candidates'
concavity/convexity be degenerate, i.e. we require that
$$
\det\left(\frac{d^2B}{d x^2}\right)=0,\qquad
\det\left(\frac{d^2b}{d x^2}\right)=0\qquad\text{in}~\Oe.
$$
We have natural boundary conditions for the candidates. Observe that
if $\av{\varphi^2}I=\av{\varphi}I^2,$ then $\varphi$ is constant on
$I$ and so $\av{f(\varphi)}I=f(x_1),$ giving $\bel{B}(x_1,x_1^2)=f(x_1)$ and
$\bel{b}(x_1,x_1^2)=f(x_1).$ In addition, since $f$ is assumed even, the
\Bfs~\eqref{b23.1} and \eqref{b24.1} do not change if we replace $\varphi$
with $-\varphi$ in their definitions and so
$$
\bel{B}(x_1,x_2)=\bel{B}(|x_1|,x_2),\quad
\bel{b}(x_1,x_2)=\bel{b}(|x_1|,x_2).
$$
Accordingly, it is enough to construct candidates $B,$ $b$ in the
half-domain $\Oe^+\df\Oe\cap\{x_1\ge0\}.$ Because of the symmetry,
we impose a zero Neumann condition on the ``internal'' boundary $x_1=0.$

In what follows we will use the notation
$g_{z}\df\frac{\partial g}{\partial z}$ for any function $g$ and
variable $z.$ Thus, we set out to solve the following boundary
value problems for candidates $B$ and $b:$
\eq[f5]{
\begin{gathered}
B_{x_1x_1}B_{x_2x_2}=B^2_{x_1x_2},\quad B_{x_1x_1}\le0,~B_{x_2x_2}\le0
\quad \text{in}~\Oe^+;
\\
B(x_1,x_1^2)=f(x_1),\quad B_{x_1}|_{x_1=0}=0,
\end{gathered}
}
\eq[f6]{
\begin{gathered}
b_{x_1x_1}b_{x_2x_2}=b^2_{x_1x_2},\quad b_{x_1x_1}\ge0,~b_{x_2x_2}\ge0
\quad \text{in}~\Oe^+;
\\
b(x_1,x_1^2)=f(x_1),\quad b_{x_1}|_{x_1=0}=0.
\end{gathered}
}
\subsection{\ma equations and their solutions}
In this part, we state a general result that will help us solve
equations~\eqref{f5} and \eqref{f6}. A homogeneous \ma equation says
that at every point in the domain the Gaussian curvature of the
surface defined by the solution is zero, i.e. there is a direction
along which the solution is linear to the second order. Such directions
form a vector field and the integral curves of this vector field turn
out to be straight lines. These linear trajectories foliate the
whole domain and --- unless the defect of the Hessian is allowed
to exceed 1 at a particular point --- do not intersect.

There exists a formal way of obtaining \ma solutions
as functions that are linear along certain trajectories. From this
viewpoint, describing a foliation of the domain by such trajectories
({\it a Bellman foliation}, in our parlance) is equivalent to
determining the corresponding solution uniquely. The theorem below
formalizes this method of foliations. We only consider the case of
plane domains here. The general formulation, as well as a simple proof,
can be found in~\cite{vv1}.
\begin{theorem}
Let $\Omega$ be a plane domain and $G=G(x_1,x_2)$ be a $C^2$ function
satisfying the homogeneous \ma equation in $\Omega:$
$$
G_{x_1x_1}G_{x_2x_2}=G^2_{x_1x_2},
$$
and such that either $G_{x_1x_1}\ne0$ or $G_{x_2x_2}\ne0.$

Let
\eq[ma1]{
t_1=G_{x_1};\quad t_2=G_{x_2};\quad t_0=G-t_1x_1-t_2x_2.
}

Then the functions $t_k$ are constant on each integral trajectory
generated by the kernel of the Hessian $\frac{d^2G}{dx^2}.$ Moreover,
these integral trajectories are straight lines given by
\eq[ma3]{
x_1dt_1+x_2dt_2+dt_0=0.
}
\end{theorem}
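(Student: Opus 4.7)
The plan is to derive the three claims from direct differentiation of the definitions in \eqref{ma1}, invoking the \ma equation only where it is needed to force the trajectories to be rectilinear.

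First, from \eqref{ma1} one computes
\[
dt_1 = G_{x_1x_1}\,dx_1 + G_{x_1x_2}\,dx_2, \qquad dt_2 = G_{x_1x_2}\,dx_1 + G_{x_2x_2}\,dx_2,
\]
so $dt_1$ and $dt_2$ are the rows of the Hessian $d^2 G$ applied to the displacement $dx$; in particular they both annihilate every vector in $\ker d^2 G$. For $t_0 = G - t_1 x_1 - t_2 x_2$, differentiating and using $dG = G_{x_1}\,dx_1 + G_{x_2}\,dx_2 = t_1\,dx_1 + t_2\,dx_2$ yields the pointwise identity $dt_0 = -x_1\,dt_1 - x_2\,dt_2$, which is \eqref{ma3} itself. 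This identity guarantees that $dt_0$ too vanishes on $\ker d^2 G$, so $t_1,t_2,t_0$ are first integrals of the kernel field and are therefore constant along each integral trajectory.

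Next, I would argue that every such trajectory is a straight line by viewing the graph of $G$ as a surface in $\mathbb{R}^3$: the homogeneous \ma equation is exactly the statement that its Gaussian curvature vanishes. By the classical developable-surface theorem, a $C^2$ surface with $K\equiv 0$ (under the rank-one condition on $d^2G$, equivalent to the nondegeneracy hypothesis that $G_{x_1x_1}\ne 0$ or $G_{x_2x_2}\ne 0$) is ruled by straight segments along its asymptotic direction; projecting back to $\Omega$, those rulings become the integral curves of $\ker d^2 G$. An alternative direct verification, which needs one extra derivative of smoothness, is to take $v=(G_{x_2x_2},-G_{x_1x_2})$ in a region where $G_{x_2x_2}\neq 0$ and check that $(v\cdot\nabla)v\parallel v$ by differentiating $G_{x_1x_1}G_{x_2x_2}=G_{x_1x_2}^2$ once in each variable and substituting.

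Finally, knowing that each trajectory is a line and that $(t_1,t_2,t_0)$ label it, I read off the line's equation from the identity. Since $x\mapsto(t_1,t_2)$ has rank one on $\Omega$, its image is locally a smooth curve; parametrize it by a transverse parameter $\tau$, so that on the trajectory labeled $\tau$ the values are $t_k=t_k(\tau)$. Inserting this into $dt_0 + x_1\,dt_1 + x_2\,dt_2 = 0$ produces the linear relation
\[
x_1\,t_1'(\tau) + x_2\,t_2'(\tau) + t_0'(\tau) = 0,
\]
which is the equation in $(x_1,x_2)$ of the straight-line trajectory with label $\tau$, i.e., precisely \eqref{ma3}.

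The main obstacle is the rectilinearity step. The $dt_k$ identities are pure bookkeeping, and passing from the identity \eqref{ma3} to the line equation is a single differentiation; the genuine geometric content of the theorem is that the integral curves of $\ker d^2 G$ are straight lines, and it is there that the homogeneous \ma equation is used in an essential way — either via the developable-surface theorem or via the algebraic identity obtained by differentiating it.
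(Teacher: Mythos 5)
The paper does not actually prove this theorem: it is quoted as a known result, with the remark that ``the general formulation, as well as a simple proof, can be found in [VV1],'' so there is no in-paper argument to compare against. Judged on its own, your proposal is essentially correct. The bookkeeping steps are exactly right: $dt_1,dt_2$ are the rows of the Hessian, hence vanish on $\ker d^2G$, and $dt_0=-x_1\,dt_1-x_2\,dt_2$ follows from $dG=t_1\,dx_1+t_2\,dx_2$, giving constancy of $t_0,t_1,t_2$ along trajectories; and your computation that $(v\cdot\nabla)v\parallel v$ for $v=(G_{x_2x_2},-G_{x_1x_2})$ does follow from differentiating $G_{x_1x_1}G_{x_2x_2}=G_{x_1x_2}^2$ (though, as you note, it needs $C^3$, which exceeds the stated $C^2$ hypothesis).

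Two points deserve more care, and they also mark where your route differs from the ``simple proof'' the paper alludes to. First, if you rely on the developable-surface theorem for rectilinearity, you are importing heavier machinery, and you still owe an argument that the projected rulings coincide with the integral trajectories of the kernel field, which for $C^2$ data is only continuous, so uniqueness of its integral curves is not automatic; the clean fix is to identify the trajectories with level arcs of $t_2$ (or $t_1$), whose gradient is nonzero by the rank-one hypothesis. Second, in your last step you treat $t_k$ as differentiable functions of a transverse parameter $\tau$; this needs justification, but it is available: since both $dt_1$ and $dt_2$ annihilate the kernel, $dt_1=\lambda\,dt_2$ pointwise with $\lambda=G_{x_1x_2}/G_{x_2x_2}$ continuous, so taking $\tau=t_2$ one gets $t_1=T_1(\tau)$ with $T_1'(\tau)=\lambda$, and evaluating $T_1'(\tau)$ via transversals through different points of the same trajectory shows $\lambda$ is constant along it. Once you have that, your own identity $x_1\,T_1'(\tau)+x_2+T_0'(\tau)=0$ already proves the trajectories are straight, making the appeal to surface theory (and the extra derivative in the alternative computation) unnecessary --- this is essentially the elementary argument the citation has in mind. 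So the proof is sound, but the intrinsic route you sketch at the end is really the whole proof, and the geometric detour can be dropped.
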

In what follows we will refer to the trajectories~\eqref{ma3} as
{\it extremal trajectories}, since they are used not only to find
a suitable \Bf\ candidate, but also to build extremizing
functions/sequences proving that the candidate is, indeed, optimal.
\subsection{Main empirical principles}
According to the preceding discussion, we need to look for a candidate
$G=G(x_1,x_2)$ satisfying either~\eqref{f5} or~\eqref{f6}. It would
be desirable to have the Bellman function for each $f$ given by a
single $C^2$ expression that is either concave or convex in the whole
$\Oe.$ However, as we will see shortly, this is asking for too much.
Instead, we will have our Bellman functions built out of several
canonical \ma solutions, each $C^2$ in a portion of $\Oe,$ glued
together so as to preserve the sign of the generalized second
differential. A word about the nomenclature: $G$ will stand for a
generic Bellman candidate, whether upper or lower, whether defined
on the whole domain or its part; as before, $B$ will stand for a
similarly generic upper candidate, and $b$ for a lower one; when
the emphasis is on the geometry of a canonical solution, the solution
will be designated with its own name and necessary indices, as will
be the sub-domain on which the solution is built; finally, we will omit the indices $f$ and
$\ve$ when no ambiguity arises.

We will build our canonical solutions by constructing the foliations
of the corresponding portions of the domain by extremal trajectories,
while adhering to several empirical principles. Since the \ma
apparatus~\eqref{ma1}--\eqref{ma3} does not differentiate among
various solutions of~\eqref{f5} or~\eqref{f6}, additional arguments,
having to do with the nature of the \Bf, are needed, chief among
which is how extremal trajectories are used to construct optimizers.
These are our principles:

{\it Principle 1: Symmetry.}
Since each \Bf\ is even in $x_1,$ each Bellman foliation is symmetric
with respect to the line $x_1=0.$

Accordingly, we only need to build each Bellman foliation in the
half-domain $\Oe^+,$ but have to consider carefully what happens
on the internal boundary $x_1=0.$ In which way can a trajectory
intersect this boundary? Since the picture is symmetric, any such
trajectory will have its counterpart from the domain
$\Oe\cap\{x_1\le0\}$ hitting the same point $(0,x_2).$ When the
domain $\Oe$ is considered as a whole, we will have two trajectories
intersecting at a point. This means that either they are two halves
of the same horizontal trajectory or the Hessian has defect 2 at
the point, meaning it is the identically zero matrix, and so the
\Bf\ is a linear function near that point. Indeed, we will encounter
both situations below. In either case, since $G_{x_1}=t_1=0$ when
$x_1=0,$ the function $G$ depends only on $x_2$ in the subdomain
that includes the line $x_1=0.$

{\it Principle 2: Tangency.} Any trajectory intersecting the upper
boundary of $\Oe,$ $x_2=x_1^2+\ve^2\!,$ must do so tangentially (unless
the \Bf\ is linear in the part of the domain containing the point of
tangency, as in this case any straight line is an extremal trajectory).

Let us explain this principle: observe that once the Bellman foliation
for the problem is determined, every point of the domain can be put on
one of the trajectories. That point, say $x,$ prescribes the averages
over the interval $Q$ of each of the functions over which the extremum
is taken in definitions~\eqref{b23} and \eqref{b24}. If such a function
is optimal (or close to optimal), then when the interval is split into
subintervals, $Q=Q_-\cup Q_+,$ the pairs of averages $x^\pm$ should
remain on the same trajectory. In such a split, $x$ will be located
between $x^-$ and $x^+.$ We must, therefore, have a tangential
intersection of the trajectory with the upper boundary, otherwise,
if we take $x^\pm$ close enough to $x,$ one of the endpoints will
exit $\Oe.$

{\it Principle 3. Optimality.} The upper \Bf\ is the smallest locally
concave solution of the \ma equation, while the lower \Bf\ is the
largest locally convex solution.

This principle may be intuitively clear, and it will be rigorously
demonstrated in Section~\ref{induction}, where Bellman induction
on scales is used to show that any locally concave function $B$
satisfying $B(x_1,x_1^2)=f(x_1)$ is a pointwise majorant of
$\bel{B}_{f,\ve},$ while any locally convex solution $b$ satisfying
$b(x_1,x_1^2)=f(x_1)$ is a pointwise minorant of $\bel{b}_{f,\ve}.$
The terms ``super-solution'' and ``sub-solution,'' respectively, are
typically used for such candidates.
\section{Local Bellman candidates: the four building blocks}
\label{local}
Having laid down our basic principles, we start building the foliations
(and so the candidates) that comply with these principles. Since we know
the behavior of any extremal trajectory touching the upper boundary, it
is convenient to start with such trajectories. The tangent line at a
point $(a,a^2+\ve^2)$ is given by
\eq[ff1]{
x_2=2ax_1+\ve^2-a^2.
}
Each such tangent intersects the lower boundary $x_2=x_1^2$ at two points,
$$
u_\pm=a\pm\ve.
$$
Let us show that if the foliation being built includes {\it a family} of
such tangents,  the whole tangent line cannot be a single extremal
trajectory, since $G_{x_2x_2}$ changes sign depending on whether $x$
is to the left or to the right of $(a,a^2+\ve^2).$ Indeed, for every
$x$ on the tangent line~\eqref{ff1}, $t_1,$ $t_2,$ and $t_0$ are functions
of $a$ only. Recall that $t_2=G_{x_2}.$ Then $G_{x_2x_2}=t_2'(a)a_{x_2}.$
Since $a$ is fixed along this line, $t_2'(a)$ is constant.
From~\eqref{ff1},
\eq[ff0]{
a_{x_2}=\frac1{2(x_1-a)}\,,
}
which changes sign at $x_1=a.$

Therefore, if the foliation contains a family of such tangents (which
justifies differentiating with respect to $a$ above), each extremal
trajectory  continues either to the right of the point of tangency
or to the left, but not both. This is in contrast to the situation,
also considered below, when the sub-domain being foliated lies entirely
under {\it one} two-sided tangent.

We will now consider the four sub-foliations out of which we will later build
two complete Bellman foliations (one for the upper and one for the
lower function) for various ranges of $p.$ The sub-foliations are two
families of one-sided tangents and two ``phase transition regimes'' used
to connect those families smoothly. The four are, in order of presentation:
\ben
\item
A collection of one-sided tangents for which the point of their
intersection with the lower boundary is to the right of the point
of tangency;
\item
A collection of one-sided tangents whose point of intersection
with the lower boundary is to the left of the point of tangency;
\item
Any foliation of the convex compact set lying under a single
two-sided tangent. There are several ways to foliate such a set,
depending on its location; in each case the bounding tangent is an
element of the foliation;
\item
Any foliation of a curvilinear ``triangle'' located between two
differently oriented one-sided tangents sharing a point on the
lower boundary. As we will see, in such a triangle the Bellman
function must be linear and so the very notion of a Bellman
foliation is trivial, as every straight line is an extremal trajectory.
\een
\begin{remark}
It would, of course, be preferable to not have to combine various
solutions, instead having a single family of tangents as the Bellman
foliation on each side of the line $x_1=0$ with a single transition
regime containing that line and connecting the two families. Such a
situation does, in fact, occur for $f(s)=|s|^p,$ $p\ge 1.$ However,
for $0<p<1$ the solutions corresponding to each family of tangents
change concavity at various points throughout the domain, necessitating
the introduction of other transition regimes.
\end{remark}
\subsection{The tangents with $u=a+\ve$}
Here we consider a family of tangents~\eqref{ff1} with $a\in[a_1,a_2],$
or $u\in[u_1,u_2],$ $u_i=a_i+\ve.$ The tangents under consideration are
one-sided, extending to the right of the point of tangency, i.e. we
have $x_1\in[a,u].$ The Bellman candidate built along these trajectories
will be called $F^+$ (or, more fully, $F^+(x;u_1,u_2)$) and the part of
$\Oe$ so foliated, $\Omega_{F^+}(u_1,u_2).$ The foliation is shown in
Figure~\ref{fig 0.001}.
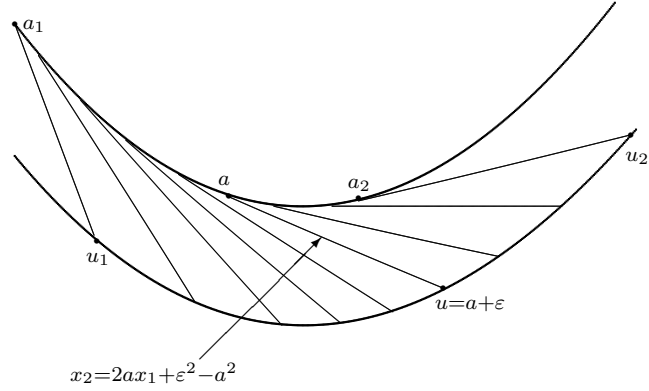
\begin{figure}[ht]
\begin{picture}(200,150)
\thicklines
\qbezier(-10,90)(100,-43)(225,100)
\qbezier(-10,140)(100,-2)(217,148)
\put(10,5){$\scriptstyle x_2=2ax_1+\ve^2-a^2$}
\put(66,79){$\scriptstyle a$}
\put(-8,138){$\scriptstyle a_1$}
\put(115,78){$\scriptstyle a_2$}
\put(16,50){$\scriptstyle u_1$}
\put(220,88){$\scriptstyle u_2$}
\put(148,32){$\scriptstyle u=a+\ve$}
\thinlines
\put(60,13){\vector(1,1){46}}
\put(71,75){\circle*{2}}
\put(152,40){\circle*{2}}
\put(-10,140){\circle*{2}}
\put(120,74){\circle*{2}}
\put(21,58){\circle*{2}}
\put(223,98){\circle*{2}}
\qbezier(70,75)(111,57)(152,40)
\qbezier(52,83)(92,57)(133,31)
\qbezier(32,96)(72,61)(113,27)
\qbezier(15,111)(53,68)(91,26)
\qbezier(-1,128)(30,80)(58,35)
\qbezier(-10,140)(5,100)(20,60)
\qbezier(88,71)(130,61)(173,52)
\qbezier(110,71)(153,71)(197,71)
\qbezier(120,73)(171,85)(223,98)
\end{picture}
\caption{The foliation of $\Omega_{F^+}(u_1,u_2).$}
\label{fig 0.001}
\end{figure}

The candidate $F^+$ is linear along each trajectory~\eqref{ff1} and
satisfies $F^+(u,u^2)=f(u),$ thus,
\eq[ff007]{
F^+(x_1,2ax_1+\ve^2-a^2)=m(u)(x_1-u)+f(u).
}
Let us calculate $t_2=F^+_{x_2}.$ Using~\eqref{ff0} and the equality
$u_{x_2}=a_{x_2},$ we get
$$
t_2=\frac{m'(u)(x_1-u)-m(u)+f'(u)}{2(x_1-a)}=
\frac12m'(u)+\frac{f'(u)-\ve m'(u)-m(u)}{2(x_1-u+\ve)}\,.
$$
Since $t_2$ is fixed whenever $a$ (or $u$) is fixed, this gives
\eq[ff008]{
t_2(u)=\frac12m'(u)
}
and
\eq[ff009]{
\ve m'(u)+m(u)=f'(u).
}
Solving equation \eqref{ff009} yields
$$
m(u)=e^{-u/\ve}
\Bigl(C+\frac1\ve\,\int_{u_1}^u\!\! f'(s)e^{s/\ve}\,ds\Bigr),
$$
and so we obtain our solution, defined for $x\in\Omega_{F^+}(u_1,u_2),$
\eq[ff4.0]{
F^+(x;u_1,u_2)=e^{-u/\ve}
\Bigl(C+\frac1\ve\int_{u_1}^u\!\!f'(s)e^{s/\ve}\,ds\Bigr)(x_1-u)+f(u),
}
where $u$ is given as a function of $x$ by
\eq[ff5]{
u=u_+=x_1+\ve-\sqrt{\ve^2-x_2+x_1^2}.
}
The last equation comes from plugging $u=a+\ve$ into~\eqref{ff1}, and the minus sign
in the front of the square root was chosen, because $x_1\in[a,u].$

We will be concerned with whether~\eqref{ff4.0} gives a concave or
convex candidate for each specific choice of $f$ (and in what parts
of the domain). To that end, let us check the sign of $F^+_{x_2x_2}.$
Using~\eqref{ff008} and~\eqref{ff009}, we obtain
$$
F^+_{x_2x_2}=t_2'(u)u_{x_2}=\frac12m''(u)u_{x_2}=
\frac1{2\ve}\left(f''(u)-f'(u)/\ve+m(u)/\ve\right)u_{x_2}.
$$
Since $u_{x_2}>0$ by~\eqref{ff0},  we conclude that
$\sgn(F^+_{x_2x_2})=\sgn(\tau_+),$ where
\eq[ff35.9]{
\tau_+(u)\df\ve e^{u/\ve}m''(u)=
(f''(u)-f'(u)/\ve)e^{u/\ve}+\frac1\ve C+
\frac1{\ve^2}\int_{u_1}^uf'(s)e^{s/\ve}\,ds.
}
Integrating by parts twice gives
\eq[ff36]{
\tau_+(u)=(f''(u_1)-f'(u_1)/\ve)e^{u_1/\ve}+\frac1\ve C+
\int_{u_1}^uf'''(s)e^{s/\ve}\,ds.
}
\subsection{The tangents with $u=a-\ve$}
We now switch to considering those one-sided tangents whose point of
intersection with the bottom boundary curve lies to the left of the
point of tangency. The picture is symmetric with the previous
one with respect to the $x_2$-axis. Thus, we replace in the preceding
formulas $x_1$ by $-x_1,$ $u$ by $-u,$ $a$ by $-a,$ $f(s)$ by $f(-s),$
$u_1$ by $-u_2,$ and $u_2$ by $-u_1.$ It is possible, of course, to
derive all formulas independently, but we only indicate which changes
are necessary. The solution constructed will be called $F^-$
(alternatively, $F^-(x;u_1,u_2)$) and the portion of $\Oe$ being
foliated, $\Omega_{F^-}(u_1,u_2).$ The foliation is shown on
Figure~\ref{fig 0.002}.
\begin{figure}[ht]
\begin{picture}(200,150)
\thicklines
\qbezier(210,90)(100,-43)(-25,100)
\qbezier(210,140)(100,-2)(-17,148)
\put(110,5){$\scriptstyle x_2=2ax_1+\ve^2-a^2$}
\put(128,80){$\scriptstyle a$}
\put(80,76){$\scriptstyle a_1$}
\put(198,139){$\scriptstyle a_2$}
\put(-30,90){$\scriptstyle u_1$}
\put(178,50){$\scriptstyle u_2$}
\put(25,31){$\scriptstyle u=a-\ve$}
\thinlines
\put(130,13){\vector(-1,1){42}}
\put(47,40){\circle*{2}}
\put(131,76){\circle*{2}}
\put(80,74){\circle*{2}}
\put(210,140){\circle*{2}}
\put(-23,98){\circle*{2}}
\put(180,59){\circle*{2}}
\qbezier(130,75)(89,57)(48,40)
\qbezier(148,83)(108,57)(67,31)
\qbezier(168,96)(128,61)(87,27)
\qbezier(185,111)(147,68)(109,26)
\qbezier(201,128)(170,80)(142,35)
\qbezier(210,140)(195,100)(180,60)
\qbezier(112,71)(70,61)(27,52)
\qbezier(90,71)(47,71)(3,71)
\qbezier(80,73)(29,85)(-23,98)
\end{picture}
\caption{The foliation of $\Omega_{F^-}(u_1,u_2).$}
\label{fig 0.002}
\end{figure}

Equations~\eqref{ff007} and~\eqref{ff008} remain the same, but
equation~\eqref{ff009} changes to
\eq[ff010]{
\ve m'(u)-m(u)=-f'(u),
}
which gives
$$
m(u)=e^{u/\ve}\Bigl(C+\frac1\ve\int_u^{u_2}\!\!f'(s)e^{-s/\ve}\,ds\Bigr).
$$

This yields our canonical solution, defined for
$x\in\Omega_{F^-}(u_1,u_2):$
$$
F^-(x;u_1,u_2)=e^{u/\ve}
\Bigl(C+\frac 1\ve\int_u^{u_2}\!\! f'(s)e^{-s/\ve}\,ds\Bigr)(x_1-u)+f(u),
$$
with
$$
u=u_-=x_1-\ve+\sqrt{\ve^2-x_2+x_1^2}.
$$
As before, the sign of $F^-_{x_2x_2}$ will be of interest. We have
$$
F^-_{x_2x_2}=t_2'(u)u_{x_2}=\frac12m''(u)u_{x_2}=
\frac1{2\ve}\bigl(-f''(u)-f'(u)/\ve+m(u)/\ve\bigr)u_{x_2}.
$$
Since $u_{x_2}<0$ by~\eqref{ff0}, we have
$\sgn(F^-_{x_2x_2})=\sgn(\tau_-),$ where
\eq[ff16]{
\tau_-(u)\df-\ve e^{-u/\ve}m''(u)=
\big(f''(u)+f'(u)/\ve\big)e^{-u/\ve}-
\frac1\ve C-\frac1{\ve^2}\int_u^{u_2}\!\!f'(s)e^{-s/\ve}\,ds.
}
Integrating by parts twice gives
\eq[ff37]{
\tau_-(u)=(f''(u_2)+f'(u_2)/\ve)e^{-u_2/\ve}-
\frac1\ve C-\int_u^{u_2}\!\!f'''(s)e^{-s/\ve}\,ds.
}
\subsection{The region under a two-sided tangent}
Next, fix $a$ and consider the tangent $x_2=2ax_1+\ve^2-a^2.$ It
intersects the lower boundary at the points $(a\pm\ve,(a\pm\ve)^2)$ and bounds
the convex subset of $\Oe$
$$
\Omega_L(a)=\{x_1^2\le x_2\le 2ax_1+\ve^2-a^2\}.
$$
(In our nomenclature, each domain is designated by the name of the
corresponding Bellman candidate; in a bit of inelegance, here we
have to define and name the domain first. Naturally, we reserve the
name $L$ for each candidate built on $\Omega_L.$)

In order for any foliation of $\Omega_L(a)$ to be a part of a symmetric
foliation of the whole $\Oe,$ we must have either $a=0$ (and so the
condition $L_{x_1}|_{x_1=0}=0$ must come into play) or $|a|-\ve\ge0,$
i.e. $\Omega_L(|a|)\subset\Oe^+.$

Let us first construct the solution for the set
$\Omega_L(0)=\{x_1^2\le x_2\le \ve^2\}.$ The Bellman candidate here
will be called $L_0.$ As explained in the formulation of the symmetry
principle, $L_0$ must be a function of $x_2$ only. Thus we have
$L_0(x)=g(x_2)$ for some function $g.$ On the boundary,
$L_0(x_1,x_1^2)=g(x_1^2)=f(x_1)$ and so $g(x_2)=f(\sqrt{x_2}).$ Thus,
\eq[g1]{
L_0(x)=f(\sqrt{x_2}).
}
The corresponding foliation of $\Omega_L(0)$ consists of horizontal lines.

We now fix an $a\ge\ve$ and construct a solution (there are actually
two) in the region $\Omega_L(a).$ Assume that there is a family of
trajectories foliating (a part of) this domain. Say such a
trajectory intersects the lower boundary at the point $(u,u^2)$ and
we have a whole interval of such values $u.$ We do not want our
trajectories to intersect the upper boundary of $\Omega_L(a),$ the
line $x_2=2ax_1+\ve^2-a^2,$ since we want to have that line as one
of the trajectories. Thus each trajectory exits $\Omega_L(a)$ through
a point $(v,v^2)$ on the lower boundary with $v=v(u).$ Then the
trajectory is given by
\eq[e1]{
x_2=(v+u)x_1-vu,
}
and we have, for a candidate $L,$
$$
L(x_1,(v+u)x_1-vu)=m(u)(x_1-u)+f(u).
$$
On the other hand, we must have $L(v,v^2)=f(v),$ hence,
\eq[e3]{
m=\frac{f(v)-f(u)}{v-u}.
}

As before, let us find $t_2=L_{x_2}.$ We have
$t_2=\bigl(m'(x_1-u)-m+f'(u)\bigr)u_{x_2}.$ Using~\eqref{e1} gives
$$
u_{x_2}=\frac1{(v'+1)(x_1-u)+u-v},
$$
and so
$$
t_2=\frac{m'}{v'+1}+\frac{m'(v-u)/(v'+1)-m+f'(u)}{(v'+1)(x_1-u)+u-v},
$$
which means
\eq[e4]{
t_2=\frac{m'}{v'+1},\quad m'(v-u)=\bigl(m-f'(u)\bigr)(v'+1).
}
The last equation, together with~\eqref{e3} and a bit of algebra, gives
$$
\frac{f(v)-f(u)}{v-u}\;v'=\frac{f'(v)+f'(u)}2\;v'.
$$

If $v'\ne0,$ then
$$
\frac{f(v)-f(u)}{v-u}=\frac{f'(v)+f'(u)}2,
$$
which, for $f(s)=|s|^p,$ is possible only if $v=-u,$ i.e.~only in the
just-considered case of $L_0.$

Therefore, $v'(u)=0$ and so the trajectories enter through different
points for different values of $u,$ but exit through the same point
$(v,v^2).$ Again, since we want the line $x_2=2ax_1+\ve^2-a^2$ as a
trajectory, we should have the exit point in one of the corners of
$\Omega_L(a),$ i.e. either $v=a-\ve=u_-$ or $v=a+\ve=u_+.$ In fact,
each of these choices yields a solution that is either concave or
convex in the whole region $\Omega_L(a).$
\begin{figure}[ht]
\begin{picture}(150,100)
\thicklines
\qbezier(-10,60)(75,-30)(160,60)
\qbezier(-10,85)(75,-5)(160,85)
\qbezier(25,31)(25,31)(150,50)
\put(85,45){$\scriptstyle a$}
\put(-2,22){$\scriptstyle u_-=a-\ve$}
\put(85,41){\circle*{2}}
\put(25,31){\circle*{2}}
\put(150,50){\circle*{2}}
\thinlines
\qbezier(25,31)(83,36)(141,42)
\qbezier(25,31)(72,32)(130,33)
\qbezier(25,31)(70,28)(115,25)
\qbezier(25,31)(61,25)(98,19)
\qbezier(25,31)(51,23)(78,15)
\end{picture}
\caption{The region $\Omega_L(a)$ foliated according to $L^-$}
\label{fig 0.01}
\end{figure}
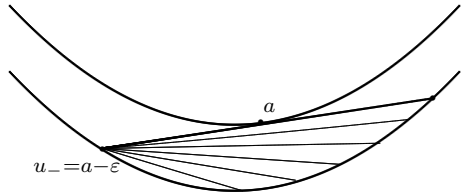
\begin{figure}[ht]
\begin{picture}(150,100)
\thicklines
\qbezier(-10,60)(75,-30)(160,60)
\qbezier(-10,85)(75,-5)(160,85)
\qbezier(25,31)(25,31)(150,50)
\put(85,45){$\scriptstyle a$}
\put(153,47){$\scriptstyle u_+=a+\ve$}
\put(85,41){\circle*{2}}
\put(25,31){\circle*{2}}
\put(150,50){\circle*{2}}
\thinlines
\qbezier(150,50)(150,50)(32,26)
\qbezier(150,50)(150,50)(42,21)
\qbezier(150,50)(150,50)(55,17)
\qbezier(150,50)(150,50)(71,15)
\qbezier(150,50)(150,50)(90,16)
\end{picture}
\caption{The region $\Omega_L(a)$ foliated according to $L^+$}
\label{fig 0.02}
\end{figure}
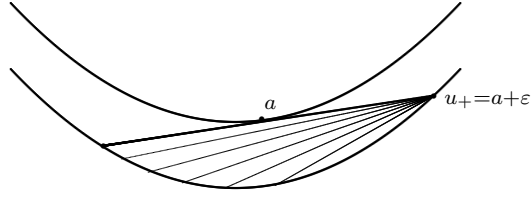

To determine whether the solution $L$ is concave or convex for each
choice of $v,$ we compute, using \eqref{e4} with $v'=0,$
$$
L_{x_2x_2}=m''(u)u_{x_2}=\frac{m''(u)}{x_1-v}.
$$
Thus $\sgn(L_{x_2x_2})=\sgn[m''(u)/(x_1-v)].$ A simple calculation
shows that $m''(u)=\frac13f'''(\xi)$ for some $\xi$ between $u$ and
$v.$ For $v=u_-$ we have $x_1\ge v,$ while for $v=u_+$ we have
$x_1\le v.$ Thus we obtain two solutions in $\Omega_L(a):$
\eq[g21]{
L^\pm(x;a)=\frac{f(u_\pm)-f(u)}{u_\pm-u}(x_1-u_\pm)+f(u_\pm),
}
with $u_\pm$ and $u$ given by
\eq[g20.0]{
u_\pm=a\pm\ve,\qquad u=\frac{x_2-u_\pm x_1}{x_1-u_\pm},
}
and
\eq[g20.1]{
\sgn(L^-_{x_2x_2})=\sgn(f'''),\quad \sgn(L^+_{x_2x_2})=-\sgn(f''').
}
The last two identities make sense if $f'''$ does not change its
sign on the interval $(a-\ve,a+\ve),$ as is the case for $f(s)=|s|^p.$
\subsection{The region between two tangents}
\label{t}
Here we fix $u$ and consider the region $\Omega_T(u)$ between the
two differently directed one-sided tangents sharing a common
lower-boundary point $(u,u^2)$ (see Figure~\ref{fig 0.1}):
$$
\Omega_T(u)=\{u-\ve\le x_1\le u+\ve\,,\;2ux_1-u^2+2\ve|u-x_1|\le
x_2\le x_1^2+\ve^2\}.
$$
As before, symmetry considerations dictate that for each $\Omega_T(u)$
we must either have $u=0$ or $\Omega_T(|u|)\subset\Oe^+$ (that is
$|u|\ge\ve$). In either case, the need for a solution --- let us call
it $T=T(x;u)$ --- in this region arises when we have to glue two
foliations (typically, those for $F^-$ and $F^+$ or $L^-$ and $L^+$).
This allows us to use the optimality principle: since each of the two
tangents bounding $\Omega_T(u)$ is an element of a Bellman foliation
in a portion of $\Oe,$ the compound Bellman candidate being built is
linear along each tangent. Since we are to construct either the smallest
concave or the largest convex \ma solution in $\Omega_T(u),$ $T$ is a
linear function of $x.$

Let us first briefly consider the case $u=0.$ We are looking for
a function
$$
T_0(x)=\alpha_1x_1+\alpha_2x_2+\alpha_0
$$
on $\Omega_T(0).$ As discussed earlier, $T_0$ is a function of $x_2$
only and so $\alpha_1=0.$ In addition $T_0(0,0)=f(0),$ which gives
\eq[t1]{
T_0(x)=\alpha x_2+f(0).
}
The constant $\alpha$ is determined by reading the boundary value off
the tangent $x_2=2\ve x_1;$ that value, in turn, depends on the other
components of the global Bellman foliation. We will see in the next
section how this simple step is accomplished.

The situation when $u\ne0$ is more involved. In theory, different
choices of $f$ may imply the need to use $\Omega_T$ to glue various
combinations of foliations of $\Omega_{F^\pm}$ and $\Omega_{L^\pm},$
described in the earlier sections. Thus, in general we are looking
for a linear candidate $T$ in the form
\eq[t2.01]{
T=\alpha_1x_1+\alpha_2x_2+\alpha_0,
}
such that it is equal to a particular candidate, $G^-,$ along its left
bounding tangent, the line $x_2=2(u-\ve)x_1-u^2+2\ve u,$ and to another
candidate, $G^+,$ along its right bounding tangent,
$x_2=2(u+\ve)x_1-u^2-2\ve u.$ That is we want to ensure that 
\eq[t2.015]{
T(x^\pm;u)=G^\pm(x^\pm),
}
where $x^-$ is any point on the left tangent and $x^-$ is any point on the right one.

The left and right tangents are assumed
to be extremal trajectories for $G^-$ and $G^+,$ respectively, and so
each function is linear along the appropriate tangent. We already have equality at the corner $(u,u^2),$ so it is
sufficient to glue our solutions at the other two corners of $\Omega_T(u):$
\eq[t2.02]{
\begin{aligned}
G^-(u-\ve,u^2-2\ve u+2\ve^2)
&=\alpha_1(u-\ve)+\alpha_2(u^2-2\ve u+2\ve^2)+\alpha_0,
\\
G^+(u+\ve,u^2+2\ve u+2\ve^2)
&=\alpha_1(u+\ve)+\alpha_2(u^2+2\ve u+2\ve^2)+\alpha_0.
\end{aligned}
}

In this paper, the only situation where we encounter $\Omega_T(u)$ with
$u\ne0,$ is when it is used as a transition regime between $\Omega_{F^+}(u_1,u)$
and $\Omega_{F^-}(u,u_2)$ for some numbers $u_1,u_2.$ It turns out that
such a transition places an important restriction on $u.$ Let us elaborate.
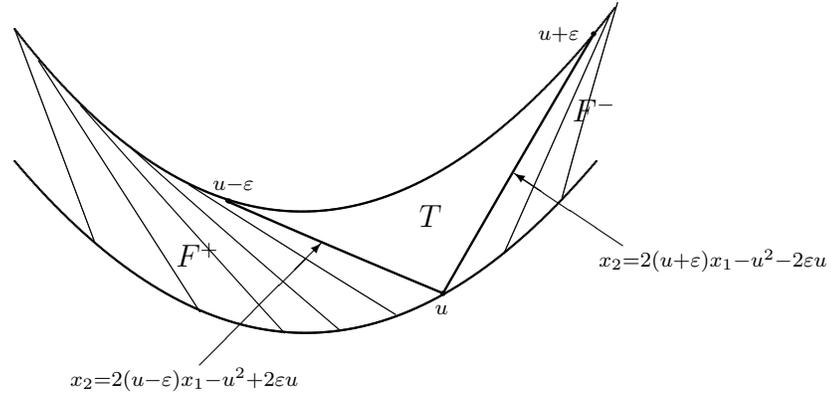
\begin{figure}[ht]
\begin{picture}(200,150)
\thicklines
\qbezier(-10,90)(100,-40)(210,90)
\qbezier(-10,140)(100,-2)(217,148)
\qbezier(70,75)(111,57)(152,40)
\qbezier(152,40)(180,89)(209,138)
\put(10,5){$\scriptstyle x_2=2(u-\ve)x_1-u^2+2\ve u$}
\put(210,50){$\scriptstyle x_2=2(u+\ve)x_1-u^2-2\ve u$}
\put(64,79){$\scriptstyle u-\ve$}
\put(148,32){$\scriptstyle u$}
\put(187,136){$\scriptstyle u+\ve$}
\put(50,50){\large $F^+$}
\put(200,105){\large $F^-$}
\put(142,65){\large $T$}
\thinlines
\put(60,13){\vector(1,1){46}}
\put(220,58){\vector(-3,2){41}}
\put(71,75){\circle*{2}}
\put(152,40){\circle*{2}}
\put(209,138){\circle*{2}}
\thinlines
\qbezier(56,81)(95,56)(134,32)
\qbezier(34,95)(73,60)(113,26)
\qbezier(15,111)(53,68)(92,25)
\qbezier(-1,128)(30,80)(60,33)
\qbezier(-10,140)(5,100)(20,60)
\qbezier(175,55)(195,100)(215,145)
\qbezier(197,76)(207,113)(218,150)
\end{picture}
\caption{The region $\Omega_T(u)$ connecting the
foliations for $F^+$ and $F^-.$}
\label{fig 0.1}
\end{figure}

We are looking for a solution in the form~\eqref{t2.01} according
to two requirements. The first one is that
relations~\eqref{t2.02} be fulfilled, with $G^-=F^-$ and $G^+=F^+$.
The second requirement is that the solution must preserve the sign
of the (generalized) second derivative along any direction. This second
requirement is, in fact, the reason why $T$ is necessary: both
$F^+$ and $F^-$ are continuous in the whole domain, but their various
second derivatives change signs.

Recall that we can write $F^+$ and $F^-$ as
$$
F^+=m_+(u)(x_1-u)+f(u),\quad F^-=m_-(u)(x_1-u)+f(u),
$$
where the coefficients $m_+$ and $m_-$ satisfy the differential
equations~\eqref{ff009} and~\eqref{ff010}, respectively:
\eq[t2]{
m_+'=-\frac1\ve(m_+-f'),\quad m_-'=\frac1\ve(m_--f').
}

For $G^-=F^+$ and $G^+=F^-,$ \eqref{t2.02} gives
\begin{align*}
m_+(u)(-\ve)+f(u)&=\alpha_1(u-\ve)+\alpha_2(u^2-2\ve u+2\ve^2)+\alpha_0,
\\
m_-(u)\ve+f(u)&=\alpha_1(u+\ve)+\alpha_2(u^2+2\ve u+2\ve^2)+\alpha_0.
\end{align*}
Together with the boundary condition
$$
f(u)=\alpha_1u+\alpha_2u^2+\alpha_0
$$
this yields
\begin{align}
\notag \alpha_1&=\frac{m_++m_-}2-\frac{m_--m_+}{2\ve}\,u,
\\
\label {al1} \alpha_2&=\frac{m_--m_+}{4\ve},
\\
\notag \alpha_0&=-\frac{m_++m_-}2\,u+\frac{m_--m_+}{4\ve}\,u^2+f.
\end{align}

This determines the function $T$ up to the parameter $u,$ which
is {\it not} free. It has to be chosen so that the Bellman candidate
$G$ constructed of the blocks $F^+,$ $T,$ and $F^-$ is either concave
or convex in the combined domain
$\Omega_{F^+}(u_1,u)\cup\Omega_T(u)\cup\Omega_{F^-}(u,u_2).$ We cannot
enforce differentiability along the tangents bounding $\Omega_T(u),$
but can, and will, require that the jumps in the first derivative(s) be
of the same sign on both boundaries. We will see in Section~\ref{induction} that it is enough to check this
in a single direction transversal to the boundary.
Let us do so for $G_{x_2}.$
Along the left and right bounding tangents the jump in this derivative
is, respectively, $\alpha_2-F^+_{x_2}$ and $\alpha_2-F^-_{x_2}.$ Recall
that each $F^+_{x_2}$ and $F^-_{x_2}$ has a constant value along the
corresponding line, given, respectively, by
$$
F^+_{x_2}=-\frac1{2\ve}(m_+-f')\quad\text{and}
\quad F^-_{x_2}=\frac1{2\ve}(m_--f'),
$$
where we have used~\eqref{ff008} and~\eqref{t2}. Taking into account
formula \eqref{al1} for $\alpha_2,$  we can write the compatibility
condition as
$$
\left[\frac{m_--m_+}{4\ve}+\frac1{2\ve}(m_+-f')\right]
\left[\frac{m_--m_+}{4\ve}-\frac1{2\ve}(m_--f')\right]
=-\frac1{16\ve^2}\left[m_++m_--2f'\right]^2\ge0,
$$
and, finally, as
\eq[t3]{
m_+(u)+m_-(u)=2f'(u).
}

We see that, indeed, in gluing $F^+$  and $F^-$ via $T,$ the parameter
$u$ cannot be chosen arbitrarily. In fact, only in certain circumstances
such a value $u$ exists. If it does, we can write
the function $T$ using \eqref{t2.01}, \eqref{al1}, and \eqref{t3} as
\eq[t4]{
T(x;u)=f'(u)(x_1-u)+\frac{m_-(u)-f'(u)}{2\ve}\,(-2x_1u+x_2+u^2)+f(u).
}

This completes the description of the four basic blocks out of which
we will assemble global Bellman candidates. The next section is devoted to this task.
\section{Global Bellman candidates}
\label{global}
In this section, we construct a set of global Bellman candidates, i.e. candidates that have the same sign of the generalized second
differential in the whole domain $\Oe.$ The main emphasis is on our specific choice of the embedding function $f,$ $f(s)=|s|^p,$
although some results are stated in more generality, which will be useful a bit later.

Although the global Bellman foliations do significantly depend on the range
of $p$ considered, some useful statements can be made for all $p.$ In
the previous section, we built general candidates $F^+(x;u_1,u_2)$ and
$F^-(x;u_1,u_2),$ each determined up to a constant $C.$ To specify this
constant for $F^+(x;u_1,u_2),$ we need to know its left neighbor; for
$F^-(x;u_1,u_2),$ we need to know its right neighbor. If $F^+$ has no
left neighbor, i.e. if~$u_1=-\infty,$ the constant is determined
using limiting considerations, and similarly for $F^-$ in the case
$u_2=\infty.$

From our limited arsenal of canonical blocks, only blocks of the $L$
type can be located directly to the left of $F^+(x;u_1,u_2).$ This gives us
a boundary condition for $F^+$ on the line $x_2=2(u_1-\ve)x_1-u_1^2+2u_1\ve,$ shared by the two canonical
sub-domains.  We know from~\eqref{g21} and~\eqref{g20.0} that
$$
L(a,a^2+\ve^2;a)=\frac{f(a-\ve)+f(a+\ve)}2\,.
$$
Setting $a=u_1-\ve$ and equating the result with $F^+(x;u_1,u_2)$ from~\eqref{ff4.0} for $u=u_1$ and $x_1=u_1-\ve,$
we obtain
$$
C=\frac{f(u_1)-f(u_1-2\ve)}{2\ve}e^{u_1/\ve}\,,
$$
which gives
\eq[gg4.0]{
F^+(x;u_1,u_2)=\frac1\ve e^{-u/\ve}\left[\frac{f(u_1)-f(u_1-2\ve)}2
e^{u_1/\ve}+\int_{u_1}^uf'(s)e^{s/\ve}\,ds\right]\, (x_1-u)+f(u),
}
where $u$ is given as a function of $x$ by~\eqref{ff5}.

Let us specify this formula for $f(s)=|s|^p$ in two cases we will need:
$u_1=\ve$ and $u_1=2\ve$.
In the first case we have, after a change of variable in the integral,
\eq[h1]{
F^+(x;\ve,u_2)= p\ve^{p-1}e^{-u/\ve}
\left[\int_1^{u/\ve}\!\!\!t^{p-1}e^t\,dt\right]
(x_1-u)+u^p,\quad x\in\Omega_{F^+}(\ve,u_2),
}
and in the second one,
\eq[hh1]{
F^+(x;2\ve,u_2)=\ve^{p-1}e^{-u/\ve}
\left[2^{p-1}e^2+p\int_2^{u/\ve}\!\!\!t^{p-1}e^t\,dt\right]
(x_1-u)+u^p,\quad x\in\Omega_{F^+}(2\ve,u_2),
}
where
$$
u=x_1+\ve-\sqrt{\ve^2-x_2+x_1^2}\,.
$$

We then have $\sgn(F^+_{x_2x_2})=\sgn(\tau_+),$ where, according
to~\eqref{ff36},
\eq[h2.5]{
\tau_+(u)=p(p-2)\ve^{p-2}
\left[e+(p-1)\int_1^{u/\ve}\!\!\!t^{p-3}e^t\,dt\,\right]\,,
}
if $u_1=\ve,$ and
\eq[hh27.6]{
\tau_+(u)=(p-1)(p-2)\ve^{p-2}
\left[2^{p-2}e^2+p\int_2^{u/\ve}\!\!\!t^{p-3}e^t\,dt\,\right]\,,
}
if $u_1=2\ve.$

Now, let us consider the case when the block $\Omega_{F^+}$ has no
left neighbor, i.e.~when $u_1=-\infty.$ The argument here is subtler.
Namely, we first look at the sign of $F^+_{x_2x_2}$ and then invoke
the optimality principle. For large negative $u$ we have
$\sgn(F^+_{x_2x_2})=\sgn(\tau_+)=\sgn(C),$ with $\tau_+$ given
by~\eqref{ff36}. Thus, if $C<0,$ we have an upper Bellman candidate.
Since we want {\it the smallest} upper candidate, and $F^+$ given
by~\eqref{ff4.0} decreases as $C$ grows, we set $C=0.$ On the other hand,
if $C>0,$ we have a lower candidate, which we want to maximize; this,
again, leads us to take $C=0.$ By symmetry, $\Omega_{F^+}(-\infty,u_2)$ must be contained in $\Oe\cap\{x_1\le0\}$ and so we can restrict ourselves to $u_2\le0.$ Therefore, we obtain the following
expression:
\eq[h2.95]{
F^+(x;-\infty,u_2)=p\ve^{p-1}e^{-u/\ve}
\left[\int^{\infty}_{-u/\ve}\!\!\!\!t^{p-1}e^{-t}\,dt\right]
(u-x_1)+|u|^p,\quad x\in\Omega_{F^+}(-\infty,u_2),
\quad \!\!u\le u_2\le0,
}
with $u$ given by
$$
u=x_1+\ve-\sqrt{\ve^2-x_2+x_1^2}\,.
$$

The consideration for $F^-(x;u_1,u_2)$ is entirely symmetrical:
\eqref{gg4.0}, \eqref{h1}, \eqref{hh1}, and~\eqref{h2.95} become,
respectively,
\eq[gg4.01]{
F^-(x;u_1,u_2)=\frac1\ve e^{u/\ve}
\left[\frac{f(u_2+2\ve)-f(u_2)}2 e^{-u_2/\ve}+
\int_u^{u_2}\!\!f'(s)e^{-s/\ve}\,ds\right]\, (x_1-u)+f(u),
}
$$
F^-(x;u_1,-\ve)= p\ve^{p-1}e^{u/\ve}
\left[\int_1^{-u/\ve}\!\!\!t^{p-1}e^t\,dt\right]
(u-x_1)+|u|^p,\quad x\in\Omega_{F^-}(u_1,-\ve),
$$
$$
F^-(x;u_1,-2\ve)= \ve^{p-1}e^{-u/\ve}
\left[2^{p-1}e^2+p\int_2^{-u/\ve}\!\!\!t^{p-1}e^t\,dt\right]
(u-x_1)+|u|^p,\quad x\in\Omega_{F^-}(u_1,-2\ve),
$$
and
\eq[h3]{
F^-(x;u_1,\infty)= p\ve^{p-1}e^{u/\ve}
\left[\int^\infty_{u/\ve}\!\!\!t^{p-1}e^{-t}\,dt\right]
(x_1-u)+u^p,\quad x\in\Omega_{F^-}(u_1,\infty),\quad 0\le u_1\le u,
}
with $u$ given by
$$
u=x_1-\ve+\sqrt{\ve^2-x_2+x_1^2}\,.
$$
We will also need $\sgn(F^-_{x_2x_2}(x;u_1,\infty))=\sgn(\tau_-),$ where,
according to~\eqref{ff37},
\eq[h4.5]{
\tau_-(u)=-p(p-1)(p-2)\ve^{p-2}\int_{u/\ve}^\infty\!\!t^{p-3}e^{-t}\,dt.
}

We will now build global Bellman candidates starting ``from the middle'':
the only canonical sub-domains that can symmetrically incorporate the
line $x_1=0$ are $\Omega_L(0)$ and $\Omega_T(0).$ We fix one of these
and glue other canonical sub-domains to it, so as to preserve the global
convexity/concavity of the resulting candidate. It is convenient to
split further discussion in two parts: $p\ge1$ and $0<p<1.$

Before we proceed, let us fix the following notation for the
coefficients of $x_1$ in~\eqref{gg4.0} and~\eqref{gg4.01} for $f(s)=|s|^p:$
\begin{align}
\label{h4.1}
m_+(u;u_1)
&=e^{-u/\ve}\left[\frac{u_1^p-|u_1-2\ve|^p}{2\ve}e^{u_1/\ve}
+p\ve^{p-1}\int_{u_1/\ve}^{u/\ve}\!\!t^{p-1}e^t\,dt\right],
&0\le u_1\le u,
\\
\label{h4.2}
m_-(u;u_2)
&=e^{u/\ve}\left[\frac{u_2^p-(u_2+2\ve)^p}{2\ve}e^{-u_2/\ve}
+p\ve^{p-1}\int_{u/\ve}^{u_2/\ve}\!\!t^{p-1}e^{-t}\,dt\right],
&0\le u\le u_2.
\end{align}
In addition, although most of our work here is with the power function, in Section~\ref{other} we will need the more general counterparts of \eqref{h4.1} and \eqref{h4.2} in the specific cases when $u_1=\ve$ and $u_2=\infty,$ respectively:
\eq[h4.3]{
m^f_+(u)=e^{-u/\ve}\int_1^{u/\ve}\!\!\!f'(\ve t)e^t\,dt,
}
\eq[h4.4]{
m^f_-(u)=e^{u/\ve}\int_{u/\ve}^\infty\!f'(\ve t)e^{-t}\,dt.
}

\subsection{The case $p\ge 1$}
Let us first consider the split $\Oe=\Omega_{F^-}(-\infty,-\ve)\cup
\Omega_L(0)\cup\Omega_{F^+}(\ve,\infty).$ According to~\eqref{g1},
the solution in $\Omega_L(0)$ is given by
$$
G(x)=L_0(x)=x_2^{p/2},
$$
hence, in that region we have $G_{x_2x_2}(x)=\frac14p(p-2)x^{p/2-2}$ and
so $\sgn(G_{x_2x_2})=\sgn(p-2).$ In $\Omega_{F^+}(\ve,\infty),$
by~\eqref{h2.5} we have $\sgn(F^+_{x_2x_2})=\sgn(p-2)$ for $p\ge1.$
We, thus, attempt to check whether setting $G(x)=F^+(x;\ve,\infty)$ in
$\Omega_{F^+}(\ve,\infty)$ will produce an acceptable Bellman candidate.
Along the line $x_2=\ve^2$ (the shared boundary of the two sub-domains),
we have $L_0=\ve^p,$ while~\eqref{h1} gives $F^+=\ve^p$ for $u=\ve.$ 
In Section~\ref{induction}, we will verify the convexity/concavity of the resulting candidate
in the combined domain $\Omega_L(0)\cup\Omega_{F^+}(\ve,\infty)$
Subject to that verification, we have
a complete candidate in the $\Oe^+$ and hence, by symmetry, in the
whole $\Oe:$
\eq[ff5.9]{
M(x)=
\begin{cases}
F^-(x;-\infty,-\ve),&x\in\Omega_{F^-}(-\infty,-\ve),\\
L_0(x),&x\in\Omega_L(0),\\
F^+(x;\ve,\infty),&x\in\Omega_{F^+}(\ve,\infty).
\end{cases}
}
More explicitly,
\eq[ff6]{
M_{\ve,p}(x)=
\begin{cases}
\ds m_+(u;\ve)(|x_1|-u)+u^p,
&x\in\Omega_{F^-}(-\infty,-\ve)\cup\Omega_{F^+}(\ve,\infty),
\rule[-10pt]{0pt}{10pt}
\\
x_2^{p/2},&x\in\Omega_L(0),
\end{cases}
}
with
$$
u=|x_1|+\ve-\sqrt{\ve^2-x_2+x_1^2}.
$$
The corresponding foliation of $\Oe$ is shown on Figure~\ref{fig1}.
\begin{figure}[ht]
\begin{picture}(200,150)
\put(0,10){\vector(1,0){200}}
\put(100,0){\vector(0,1){150}}
\thicklines
\qbezier(-40,120)(100,-100)(240,120)
\qbezier(-10,140)(100,0)(210,140)
\qbezier(-3,70)(0,70)(203,70)
\thinlines
\qbezier(5,60)(100,60)(195,60)
\qbezier(16,50)(100,50)(184,50)
\qbezier(27,40)(100,40)(173,40)
\qbezier(40,30)(100,30)(160,30)
\qbezier(58,20)(100,20)(142,20)
\qbezier(95,10)(100,10)(105,10)
\qbezier(105,70)(105,70)(211,80)
\qbezier(127,74)(127,74)(226,100)
\qbezier(145,81)(145,81)(245,135)
\qbezier(95,70)(95,70)(-11,80)
\qbezier(73,74)(73,74)(-26,100)
\qbezier(55,81)(55,81)(-45,135)
\put(95,35){$\Omega_L$}
\put(180,93){$\Omega_{F^+}$}
\put(-5,93){$\Omega_{F^-}$}
\end{picture}
\caption{The Bellman foliation for the candidate $M.$}
\label{fig1}
\end{figure}
This function gives an upper Bellman candidate for $1\le p\le 2$ and a
lower one for $2\le p<\infty.$ Let us write separately the candidate
for the important case $p=1,$ when the integrals can be evaluated
explicitly:
\eq[ff7]{
M_{\ve,1}(x)=
\begin{cases}
\ds |x_1|+\left(\ve-\sqrt{\ve^2-x_2+x_1^2}\right)
&\ds\!\!\!\!\!\exp\frac{-|x_1|+\sqrt{\ve^2-x_2+x_1^2}}\ve\,,\\
&x\in\Omega_{F^-}(-\infty,0)\cup\Omega_{F^+}(0,\infty),\\
x_2^{1/2},&x\in\Omega_L(0).\rule{0pt}{20pt}
\end{cases}
}

To get the other Bellman candidate, we consider the split
$\Oe=\Omega_{F^+}(-\infty,0)\cup\Omega_{T}(0)\cup\Omega_{F^-}(0,\infty).$
In $\Omega_{F^-}(0,\infty)$ we, naturally, set
$$
G(x)=F^-(x;0,\infty).
$$
According to~\eqref{h4.5}, $\sgn(F^-_{x_2x_2})\!=\!-\sgn(p-2).$
On the shared boundary of $\Omega_T(0)\!$ and $\Omega_{F^-}(0,\infty),$
the line $x_2=2\ve x_1,$ \eqref{h3} gives
$$
F^-|_{u=0}=p\ve^{p-1}\left[\int_0^\infty\!\!t^pe^{-t}dt\right]x_1
=p\ve^{p-1}\Gamma(p)\,x_1=\frac p2\ve^{p-2}\Gamma(p)x_2.
$$

In $\Omega_T(0),$ we set
$$
G(x)=T_0(x),
$$
where, from~\eqref{t1},
$$
T_0(x)=\alpha x_2.
$$
To preserve continuity along the line $x_2=2\ve x_1,$ we set
$\alpha=\frac p2\ve^{p-2}\Gamma(p).$ Again, we postpone until the next section the verification
that the resulting candidate is locally convex/concave in $\Omega_T(0)\cup\Omega_{F^-}(0,\infty).$
By symmetry,
we obtain the following global candidate:
\eq[m2.9]{
N(x)=
\begin{cases}
F^+(x;-\infty,0),&x\in\Omega_{F^+}(-\infty,0),\\
T_0(x),&x\in\Omega_T(0),\\
F^-(x;0,\infty),&x\in\Omega_{F^-}(0,\infty).
\end{cases}
}
More specifically,
\eq[m3]{
N_{\ve,p}(x)=
\begin{cases}
\ds m_-(u;\infty)(|x_1|-u)+u^p,
&x\in\Omega_{F^+}(-\infty,0)\cup\Omega_{F^-}(0,\infty),
\rule[-10pt]{0pt}{10pt}
\\
~\ds\frac p2\,\ve^{p-2}\Gamma(p)\,x_2,&x\in\Omega_T(0),
\end{cases}
}
where
$$
u=|x_1|-\ve+\sqrt{\ve^2-x_2+x_1^2}\,.
$$
This function gives an upper candidate for $p\ge2$ and a lower one
for $1\le p\le 2.$ The corresponding Bellman foliation is shown on
Figure~\ref{fig2}. We note that this geometric description is accurate
for all $p>1,$ but not for $p=1.$ In that case, we can again evaluate
the integrals explicitly and thus obtain
\eq[m6]{
N_{\ve,1}(x)=
\begin{cases}
\ds |x_1|,&x\in\Omega_{F^+}(-\infty,0)\cup\Omega_{F^-}(0,\infty),
\rule[-10pt]{0pt}{10pt}
\\
~\ds\frac 1{2\ve}\,x_2,&x\in\Omega_T(0).
\end{cases}
}
Therefore, $N_{\ve,1}$ is a piecewise linear function and the defect of
its Hessian is 2 in the interior of each canonical sub-domain involved.
Thus every straight line lying entirely in $\Omega_{F^-}(0,\infty),$
$\Omega_{F^+}(-\infty,0),$ or $\Omega_T(0)$ is an extremal trajectory
for $N_{\ve,1}.$

\begin{figure}[ht]
\begin{picture}(200,150)
\put(0,10){\vector(1,0){200}}
\put(100,0){\vector(0,1){150}}
\thicklines
\qbezier(0,65)(100,-45)(200,65)
\qbezier(-10,140)(100,0)(210,140)
\qbezier(100,10)(100,10)(185,111)
\qbezier(100,10)(100,10)(15,111)
\thinlines
\qbezier(118,12)(118,12)(196,122)
\qbezier(132,15)(132,15)(203,131)
\qbezier(148,23)(148,23)(209,138)
\qbezier(173,40)(173,40)(211,140)
\qbezier(82,12)(82,12)(4,122)
\qbezier(68,15)(68,15)(-3,131)
\qbezier(52,23)(52,23)(-9,138)
\qbezier(27,40)(27,40)(-11,140)
\put(95,50){$\Omega_T$}
\put(168,75){$\Omega_{F^-}$}
\put(15,75){$\Omega_{F^+}$}
\end{picture}
\caption{The Bellman foliation for the candidate $N.$}
\label{fig2}
\end{figure}
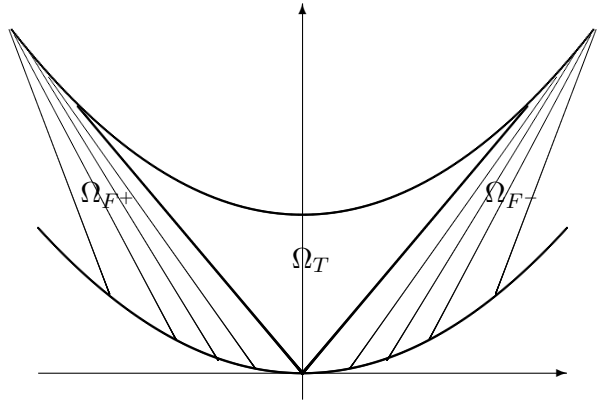
\subsection{The case $0<p<1$}
In this case, we have two Bellman candidates of a more complicated
nature. As before, we build each global solution starting with
either $L_0(x)$ or $T_0(x)$ (thus placing either $\Omega_L(0)$ or
$\Omega_T(0)$ at the center of $\Oe$) and then extending the solution
appropriately to the whole $\Oe.$

Let us first set $G(x)=L_0(x)=x_2^{p/2}$ in
$\Omega_L(0).$ Since $\sgn(L_{x_2x_2})=\sgn(p(p-2))<0,$ we are building
an upper Bellman candidate. It is natural to attempt to glue a solution
$F^+(x;\ve,u_2)$ to this foundation, for some $u_2>0,$ just as we did
in the previous case. Since $\sgn(F^+_{x_2x_2})=\sgn(\tau_+),$ and,
by~\eqref{h2.5}, $\tau_+(\ve)<0,$ this is a proper choice in that we
still have a concave candidate. However, we cannot take $u_2=\infty,$
as we did before, since $\tau_+(u)>0$ for sufficiently large $u.$ This means
that the only canonical solution we can have after all transient effects
have dissipated (i.e. for large $u,$ after all necessary transition
regimes have been deployed) is $F^-(x;u_1,\infty).$ Again, we check
the sign of $F^-_{x_2x_2}:$ $\sgn(F^-_{x_2x_2})=\sgn(\tau_-)$ and,
by~\eqref{h4.5}, $\tau_-(u)<0$ for all $u.$ Therefore, we need a
transition regime connecting the foliations for $F^+(x;\ve,u_1)$ and
$F^-(x;u_1,\infty).$ We have an obvious choice, one that was considered
in section~\ref{t}: $T(x;\xi)$ for a specific value of $\xi.$
Condition~\eqref{t3} dictates that $T(x;\xi)$ would appropriately
glue $F^+(x;\ve,\xi)$ and $F^-(x;\xi,\infty)$ if and only if
\eq[h7.1]{
m_+(\xi)+m_-(\xi)=2p\,\xi^{p-1}.
}
Slightly rewriting the integrals in~\eqref{h4.1} and~\eqref{h4.2},
and letting $\xi=\mu\ve,$ we can reformulate this condition as follows:
\eq[h8]{
e^{-\mu}\int_1^\mu\!\!z^{p-1}e^z\,dz
+e^{\mu}\int_\mu^\infty\!\!\! z^{p-1}e^{-z}\,dz=2\mu^{p-1}.
}
Obviously, we need to have $\mu>1,$ that is $\xi-\ve=\ve(\mu-1)>0.$
We must verify that such a $\mu$ exists and that $\tau_+(\xi)\le0,$
meaning $F^+$ remains an upper candidate up to the line $u=\xi.$
Changing the variable in~\eqref{h2.5} and integrating by parts twice,
we rewrite this condition as
\eq[h9]{
e^{-\mu}\int_1^\mu\!\!z^{p-1}e^z\,dz\le\mu^{p-1}+(1-p)\mu^{p-2}.
}
We are about to prove the existence of the solution of~\eqref{h8}
satisfying~\eqref{h9}. We prove a slightly more general result, which we will need in Secion~\ref{other}. 
Namely, let us replace the condition \eqref{h7.1} with
$$
m^f_+(\xi)+m^f_-(\xi)=2f'(\xi),
$$
where the functions $m^f_\pm$ are defined by~\eqref{h4.3}
and~\eqref{h4.4}. Rewriting the integrals, we get
\eq[h9.2]{
e^{-\mu}\int_1^\mu\!\!f'(z\ve)e^z\,dz+
e^\mu\int_\mu^\infty\!\!\!f'(z\ve)e^{-z}\,dz=2f'(\mu\ve).
}
In addition, recall that for a general $f$ we have $\sgn(F^+_{x_2x_2})
=\sgn(\tau_+^f),$ where we have set, using $u_1=\ve$ and $C=0$
in~\eqref{ff35.9},
$$
\tau_+^f(u)=(f''(u)-f'(u)/\ve)e^{u/\ve}+
\frac1{\ve^2}\int_\ve^u\!\!\!f'(s)e^{s/\ve}\,ds.
$$
After rewriting, we get $\sgn(\tau_+^f(\xi))=
\sgn(\tau_+^f(\ve\mu)=\sgn(g(\mu)),$ where
$$
g(\mu)\df e^{-\mu}\int_1^\mu\!\! f'(z\ve)e^z\,dz-f'(\mu\ve)+
\ve f''(\mu\ve).
$$

\begin{lemma}
\label{lm}
Fix $\ve>0$ and let $f$ be a thrice-differentiable function on
$(\ve,\infty)$ satisfying
\eq[a2]{
\lim_{s\to\infty}e^s|f'''(s\ve)|=\infty
}
and either
$$
\hbox{Case 1\textup:}\qquad f'(t)\ge0,\ f''(t)\le0,\ f'''(t)\ge0,
\quad \forall t\in(\ve,\infty),
$$
or
$$
\hbox{Case 2\textup:}\qquad f'(t)\le0,\ f''(t)\ge0,\ f'''(t)\le0,
\quad \forall t\in(\ve,\infty).
$$
Then\textup, for each $\ve>0,$ equation~\eqref{h9.2} has a unique
solution $\mu^*$ in the interval $(1,\infty).$ Furthermore, $\mu^*$
satisfies
\eq[a1]{
\begin{split}
g(\mu_*)\le0&\quad\hbox{in Case 1\textup,}\\
g(\mu_*)\ge0&\quad\hbox{in Case 2}.
\end{split}
}
\end{lemma}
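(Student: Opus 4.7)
The plan is to reduce to Case~1 by the symmetry $f\mapsto-f$ (which negates $g$ and preserves the solution set of~\eqref{h9.2} while interchanging the two sign patterns), then establish a clean formula for the defect
\[
H(\mu):=e^{-\mu}\!\int_{1}^{\mu}\!\!f'(z\ve)e^{z}\,dz+e^{\mu}\!\int_{\mu}^{\infty}\!\!f'(z\ve)e^{-z}\,dz-2f'(\mu\ve)
\]
via two integrations by parts, and finally read both the existence--uniqueness statement and the sign condition on $g(\mu^{*})$ directly off that formula.

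Set $A(\mu)=e^{-\mu}\int_{1}^{\mu}f'(z\ve)e^{z}dz$ and $B(\mu)=e^{\mu}\int_{\mu}^{\infty}f'(z\ve)e^{-z}dz$, so $H=A+B-2f'(\mu\ve)$. Integrating by parts twice in each integral (the boundary contributions at $\infty$ vanish because $f'$ and $f''$ are bounded: monotone and one-signed by the Case~1 hypotheses) yields
\[
A(\mu)=f'(\mu\ve)-\ve f''(\mu\ve)+Ke^{1-\mu}+\ve^{2}I_{1}(\mu),\quad B(\mu)=f'(\mu\ve)+\ve f''(\mu\ve)+\ve^{2}I_{2}(\mu),
\]
where $K=-f'(\ve)+\ve f''(\ve)$, $I_{1}(\mu)=e^{-\mu}\int_{1}^{\mu}f'''(s\ve)e^{s}ds$, and $I_{2}(\mu)=e^{\mu}\int_{\mu}^{\infty}f'''(s\ve)e^{-s}ds$. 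Adding gives the key identity
\[
H(\mu)=Ke^{1-\mu}+\ve^{2}\bigl(I_{1}(\mu)+I_{2}(\mu)\bigr),
\]
and the same cancellation applied to $g$ yields $g(\mu)=Ke^{1-\mu}+\ve^{2}I_{1}(\mu)$, so the structural relation $g(\mu)=H(\mu)-\ve^{2}I_{2}(\mu)$ holds identically in $\mu$.

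For existence and uniqueness in Case~1, I would note that $K<0$ (equality $K=0$ would force $f'\equiv 0$ on $[\ve,\infty)$, then $f'''\equiv0$, contradicting~\eqref{a2}) and $I_{1},I_{2}\ge0$. The inequality $H(1)\le -f'(\ve)\le0$ follows from the pointwise bound $e\!\int_{1}^{\infty}\!f'(z\ve)e^{-z}dz\le f'(\ve)\cdot e\cdot e^{-1}$, valid because $f'$ is nonnegative and decreasing. Introducing $\eta(\mu):=e^{\mu}(I_{1}+I_{2})(\mu)$, so that $e^{\mu}H(\mu)=Ke+\ve^{2}\eta(\mu)$, a direct differentiation gives $\eta'(\mu)=2e^{2\mu}\!\int_{\mu}^{\infty}f'''(s\ve)e^{-s}ds>0$ on $(1,\infty)$, since $f'''\ge0$ and~\eqref{a2} prevents $f'''$ from vanishing past any point. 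Thus $\eta$ is strictly increasing, and~\eqref{a2} (which gives $f'''(s\ve)\ge Me^{-s}$ eventually, for any $M$) implies $e^{2\mu}\!\int_{\mu}^{\mu+1}f'''(s\ve)e^{-s}ds\ge\tfrac{M}{2}(1-e^{-2})$, so $\eta(\mu)\to\infty$. The threshold equation $\ve^{2}\eta(\mu)=-Ke$ therefore has a unique root $\mu^{*}\in(1,\infty)$, and this is precisely the unique zero of $H$ on $(1,\infty)$.

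Substituting $H(\mu^{*})=0$ into the identity $g=H-\ve^{2}I_{2}$ gives $g(\mu^{*})=-\ve^{2}I_{2}(\mu^{*})\le 0$ in Case~1 (because $f'''\ge0$), which is~\eqref{a1}; Case~2 is handled by the symmetry $f\mapsto-f$, which flips the sign of $I_{2}$ and therefore of $g(\mu^{*})$. The main obstacle I expect is confirming that $H$ becomes strictly positive: the exponentially decaying $Ke^{1-\mu}$ and the $f'''$-driven decay of $\ve^{2}(I_{1}+I_{2})$ could in principle die at comparable rates, and it is precisely the hypothesis~\eqref{a2} that guarantees $\eta(\mu)\to\infty$ and forces $H$ past zero; the rest of the proof is bookkeeping on the two-fold integration-by-parts identity.
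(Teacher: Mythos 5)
Your proposal is correct and is essentially the paper's own proof in different bookkeeping: the paper works with $h(\mu)=e^{\mu}H(\mu)$, computes $h'(\mu)=2\ve^{2}e^{2\mu}\int_{\mu}^{\infty}f'''(z\ve)e^{-z}\,dz$, checks $h(1)<0$, $h'>0$, $h'\to\infty$ in Case 1 (Case 2 in parallel rather than via the $f\mapsto-f$ symmetry), and ends with the same identity $g(\mu_*)=-\ve^{2}e^{\mu_*}\int_{\mu_*}^{\infty}f'''(z\ve)e^{-z}\,dz$ at the root. Your up-front double integration by parts and the monotone function $\eta$ encode exactly this mechanism, so the two arguments coincide in substance.
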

\begin{proof}
Letting
$$
h(\mu)=\int_1^\mu\!\! f'(z\ve)e^z\,dz+
e^{2\mu}\int_\mu^\infty\!\! f'(z\ve)e^{-z}\,dz-2e^\mu f'(\mu\ve),
$$
we se that the task is to prove that $h$ has a unique zero $\mu^*$ in
$(1,\infty).$ We calculate:
$$
h'(\mu)=2e^{2\mu}\left[\int_\mu^\infty\!\! f'(z\ve)e^{-z}\,dz-
f'(\mu\ve)e^{-\mu}-\ve f''(\mu\ve)e^{-\mu}\right]=
2\ve^2e^{2\mu}\int_\mu^\infty\!\! f'''(z\ve)e^{-z}\,dz.
$$
Thus,
\begin{itemize}
\item[--]
Case 1: $h'(\mu)>0,$ $\forall\mu\ge1,$ and, from~\eqref{a2},
$\lim_{\mu\to\infty}h'(\mu)=\infty.$ In addition,
$$
h(1)=e^2\int_1^\infty\!\! f'(z\ve)e^{-z}\,dz-2ef'(\ve)=
\ve e^2\int_1^\infty\!\! f''(z\ve)e^{-z}\,dz-f'(\ve)e<0.
$$
\item[--]
Case 2: $h'(\mu)<0,$ $\forall\mu\ge1,$ $\lim_{\mu\to\infty}h'(\mu)=
-\infty,$ and $h(1)>0.$
\end{itemize}
In each case, this implies the existence of a unique root $\mu_*$ of $h.$

To finish the proof, observe that
$$
e^{-\mu_*}\int_1^{\mu_*}\!\!f'(z\ve)e^z\,dz=
2f'(\mu_*\ve)-e^{\mu_*}\int_{\mu_*}^\infty\!\! f'(z\ve)e^{-z}\,dz
$$
and so we have
$$
g(\mu_*)=-e^{\mu_*}\int_{\mu_*}^\infty\!\! f'(z\ve)e^{-z}\,dz+
f'(\mu_*\ve)+\ve f''(\mu_*\ve)=
-\ve e^{\mu_*}\int_{\mu_*}^\infty\!\! f'''(z\ve)e^{-z}\,dz,
$$
which yields \eqref{a1}.
\end{proof}

Setting $f(s)=|s|^p,$ we obtain an immediate
\begin{corollary}
\label{mu}
For each $p<1,$ equation~\eqref{h8} has a unique solution $\mu_*$ in
the interval $(1,\infty).$ Furthermore, $\mu_*$ satisfies~\eqref{h9}.
\end{corollary}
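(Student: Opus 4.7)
The plan is simply to apply Lemma~\ref{lm} with $f(s)=s^p$ (restricted to $s>\ve$, where the hypotheses are to hold) and translate the conclusion. Since the corollary is labeled as immediate, all the real work is already done in the lemma, and the task reduces to checking the derivative signs, the growth condition, and then performing a change of normalization to identify the statements.

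First, I would determine which case of the lemma is relevant. For $f(s)=s^p$ with $0<p<1$, the derivatives are $f'(t)=pt^{p-1}>0$, $f''(t)=p(p-1)t^{p-2}\le 0$ (because $p-1<0$), and $f'''(t)=p(p-1)(p-2)t^{p-3}\ge 0$ (because $p-1$ and $p-2$ are both negative, so their product is positive). This places $f$ in Case~1 of Lemma~\ref{lm}. The growth condition~\eqref{a2} is also immediate: $|f'''(s\ve)|$ is a positive constant times $s^{p-3}$, and the polynomial decay is swamped by $e^s$, so $\lim_{s\to\infty}e^s|f'''(s\ve)|=\infty$.

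Next, I would verify that Lemma~\ref{lm}'s equation~\eqref{h9.2} specializes to~\eqref{h8}. Plugging $f(s)=s^p$ into~\eqref{h9.2} gives
$$
e^{-\mu}\int_1^\mu p(\ve z)^{p-1}e^z\,dz+e^\mu\int_\mu^\infty p(\ve z)^{p-1}e^{-z}\,dz=2p(\ve\mu)^{p-1},
$$
and dividing both sides by $p\ve^{p-1}$ yields exactly~\eqref{h8}. The lemma therefore hands over a unique $\mu_*\in(1,\infty)$ solving~\eqref{h8}.

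Finally, I would translate conclusion~\eqref{a1}. In Case~1 it reads $g(\mu_*)\le 0$, where by definition $g(\mu)=e^{-\mu}\int_1^\mu f'(z\ve)e^z\,dz-f'(\mu\ve)+\ve f''(\mu\ve)$. For $f(s)=s^p$ one has $f'(\mu\ve)=p\ve^{p-1}\mu^{p-1}$ and $\ve f''(\mu\ve)=p(p-1)\ve^{p-1}\mu^{p-2}$, so $g(\mu_*)\le 0$ becomes
$$
e^{-\mu_*}\int_1^{\mu_*}p\ve^{p-1}z^{p-1}e^z\,dz\le p\ve^{p-1}\bigl[\mu_*^{p-1}+(1-p)\mu_*^{p-2}\bigr],
$$
which, after dividing by $p\ve^{p-1}$, is precisely~\eqref{h9}. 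There is no genuine obstacle; the only thing to be careful about is the normalization by $p\ve^{p-1}$ in passing between the general $f$-version and the power-specific version of both the defining equation and the auxiliary inequality.
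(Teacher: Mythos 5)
Your verification is exactly the argument the paper intends (the paper gives no proof beyond ``setting $f(s)=|s|^p$''), and for $0<p<1$ every step you perform is correct: the sign pattern $f'>0$, $f''\le0$, $f'''\ge0$ puts the power function in Case~1 of Lemma~\ref{lm}, the growth condition~\eqref{a2} holds, and dividing~\eqref{h9.2} and the Case~1 conclusion $g(\mu_*)\le0$ of~\eqref{a1} by $p\ve^{p-1}>0$ gives precisely~\eqref{h8} and~\eqref{h9}.

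The one point you leave uncovered is the range of $p$: the corollary asserts the statement for \emph{each} $p<1$ (and the remark that follows it, with $\mu_*(p)\to1$ as $p\to-\infty$, as well as the later use of $P_{\ve,p}$ for $p<0$ in Section~\ref{other}, show that negative $p$ is genuinely intended), whereas you only treat $0<p<1$. For $p<0$ the derivatives of $s\mapsto s^p$ on $(\ve,\infty)$ satisfy $f'\le0$, $f''\ge0$, $f'''\le0$, so you are in Case~2 of Lemma~\ref{lm}; existence and uniqueness of $\mu_*$ follow as before, and now the conclusion is $g(\mu_*)\ge0$, which upon dividing by $p\ve^{p-1}<0$ reverses and again yields~\eqref{h9} --- so the normalization step you flag as ``the only thing to be careful about'' really does require this extra sign discussion. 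The endpoint $p=0$ is degenerate for the power function ($f'\equiv0$ makes~\eqref{h9.2} vacuous), and if one wants~\eqref{h8} with $p=0$ it must be obtained from $f(s)=\log|s|$ (again Case~1), which is how the paper treats that case in Section~\ref{other}. Adding these two observations closes the gap between your argument and the stated corollary.
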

\begin{remark}
It is easy to show that $\mu_*(p)\to\infty,$ as $p\to1^-$ and
$\mu_*(p)\to1,$ as $p\to-\infty.$
\end{remark}
From now on, let us denote the solution of~\eqref{h8} simply by $\mu;$
also let $\xi=\mu\ve.$  The lemma just proved means that we have,
indeed, succeeded in building a complete Bellman candidate. On
$\Omega_T(\xi)$ that candidate is given by $T(x;\xi),$ where,
according to~\eqref{t4},
$$
T(x;\xi)=p\xi^{p-1}x_1+
\frac1{2\ve}(m_-(\xi;\infty)-p\xi^{p-1})(-2x_1\xi+x_2+\xi^2)+(1-p)\xi^p.
$$

Extending, as before, the solution to the left of the line $x_1=0$ by
symmetry, we can write down our global candidate:
\eq[h11]{
P(x)=
\begin{cases}
F^+(x;-\infty,-\xi),&x\in\Omega_{F^+}(-\infty,-\xi),\\
T(x;-\xi),&x\in\Omega_T(-\xi),\\
F^-(x;-\xi,-\ve),&x\in\Omega_{F^-}(-\xi,-\ve),\\
L_0(x),&x\in\Omega_L(0),\\
F^+(x;\ve,\xi),&x\in\Omega_{F^+}(\ve,\xi),\\
T(x;\xi),&x\in\Omega_T(\xi),\\
F^-(x;\xi,\infty),&x\in\Omega_{F^-}(\xi,\infty).
\end{cases}
}
This representation exhibits the geometric structure of $P.$
In addition, we need a usable formula:
\eq[h12]{
P_{\ve,p}(x)=
\begin{cases}
m_-(u_-;\infty)(|x_1|-u_-)+u_-^p,
&x\in\Omega_{F^+}(-\infty,-\xi)\cup\Omega_{F^-}(\xi,\infty),\\
p\xi^{p-1}|x_1|+\frac1{2\ve}(m_-(\xi;\infty)-p\xi^{p-1})
\rule[0pt]{0pt}{20pt}
(&\!\!\!\!\!\!-2|x_1|\xi+x_2+\xi^2)+(1-p)\xi^p,\\
&x\in\Omega_T(-\xi)\cup\Omega_T(\xi),\\
m_+(u_+;\ve)(|x_1|-u_+)+u_+^p,\rule[0pt]{0pt}{15pt}
&x\in\Omega_{F^-}(-\xi,-\ve)\cup\Omega_{F^+}(\ve,\xi),\\
x_2^{p/2},&x\in\Omega_L(0),\rule[0pt]{0pt}{20pt}
\end{cases}
}
where $\xi=\mu\ve,$ $\mu$ is the unique solution of~\eqref{h8} in
$(1,\infty),$ and
$$
u_+=|x_1|+\ve-\sqrt{\ve^2-x_2+x_1^2}\,,\quad
u_-=|x_1|-\ve+\sqrt{\ve^2-x_2+x_1^2}\,.
$$
As noted before, $P_{\ve,p}$ gives an upper Bellman candidate for $0<p<1.$
The foliation for this function is shown on Figure~\ref{fig67}.
\begin{figure}[ht]
  \centering{\includegraphics[width=11cm]{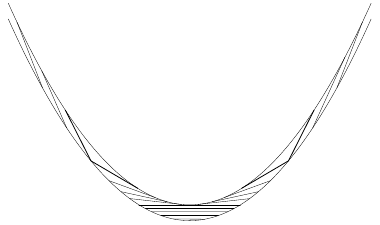}}
\caption{The Bellman foliation for the candidate $P.$}
\label{fig67}
\end{figure}

To construct the lower candidate, we place $\Omega_T(0)$ at the
center of $\Oe,$ thus setting
\eq[h26]{
G(x)=T_0(x)=\alpha x_2,\quad x\in \Omega_T(0),
}
with the constant $\alpha$ still to be determined. We now have to glue
a canonical candidate to the right of $T_0(x).$ Geometrically, we need
a candidate whose foliation includes the line $x_2=2\ve x_1$ and, thus,
have three choices: $F^-(x;0,\beta),$ for some $\beta;$ $L^+(x;\ve);$
and $L^-(x;\ve).$

Let us first examine $F^-(x;0,\beta).$ We have $\sgn(F^-_{x_2x_2})=
\sgn(\tau_-),$ where, according to~\eqref{ff16}, $\tau_-(u)=
p(p-1)O(u^{p-2})$ for small positive $u.$ This means that near
the boundary of $\Omega_T(0)$ (i.e. the line $u=0$), we have
$F^-_{x_2x_2}<0,$ that is $F^-$ gives an {\it upper} candidate,
while we are building a lower one.

For the other two possibilities, \eqref{g20.1} gives
$$
\sgn(L^\pm_{x_2x_2})=\mp\sgn(p(p-1)(p-2)),
$$
meaning only $L^-$ gives a lower candidate. Therefore, we set
$$
G(x)=L^-(x;\ve)=x_2^{p-1}x_1^{2-p},\quad x\in \Omega_L(\ve),
$$
where we have used the ``$-$'' part of formula \eqref{g21} with $u_-=0$ and $f(s)=|s|^p.$

We are now in a position to determine the constant $\alpha$
in~\eqref{h26}: setting $T_0(x_1,2\ve x_1)=L^-(x_1,2\ve x_1)$ gives
$$
\alpha=(2\ve)^{p-2}.
$$

Having determined our candidate in $\Omega_T(0)\cup\Omega_L(\ve),$ we
now have to glue another canonical solution to $L^-.$ Observe that for
sufficiently large $x_1$ we expect our candidate to be given by
$F^+(x;\gamma,\infty)$ (according to~\eqref{h4.5}, its counterpart, $F^-,$ determines an upper
candidate and so does not work here). We
attempt to take $\gamma=2\ve,$ i.e. glue $F^+$ directly to $L^-$
without further transition regimes. From~\eqref{hh27.6},
$\sgn(\tau_+(u))>0,$ $\forall u\ge2\ve.$ Therefore, we have obtained
the following complete lower candidate:
\begin{figure}[ht]
  \centering{\includegraphics[width=11cm]{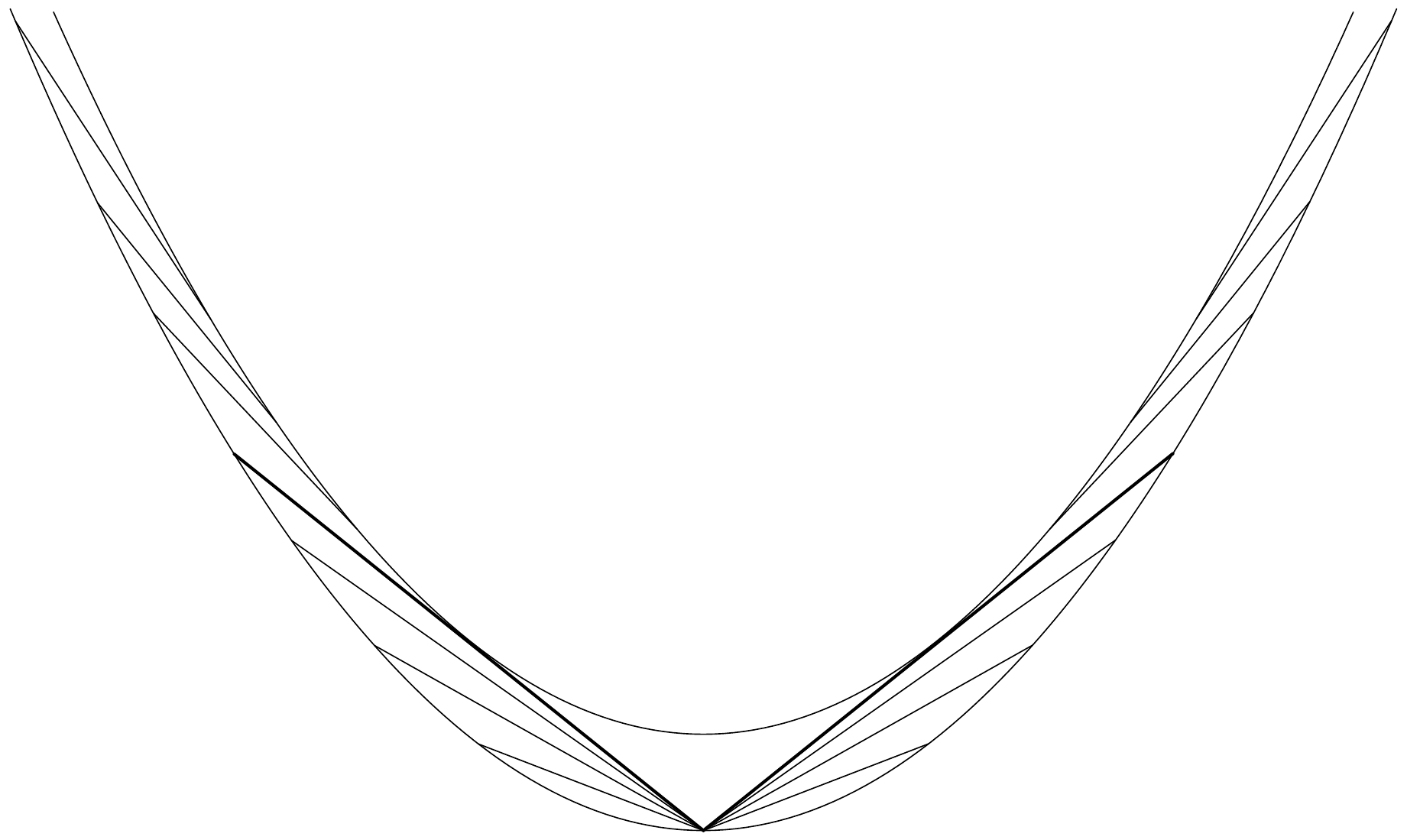}}
\caption{The Bellman foliation for the candidate $R.$}
\label{fig66}
\end{figure}
\eq[h27]{
R(x)=
\begin{cases}
F^-(x;-\infty,-2\ve),&x\in\Omega_{F^-}(-\infty,-2\ve),\\
L^+(x;-\ve),&x\in\Omega_L(-\ve),\\
T_0(x),&x\in\Omega_T(0),\\
L^-(x;\ve)&x\in\Omega_L(\ve),\\
F^+(x;2\ve,\infty),&x\in\Omega_{F^+}(2\ve,\infty).
\end{cases}
}
Written explicitly, the function $R$ is given by
\eq[h28]{
R_{\ve,p}(x)=
\begin{cases}
\ds m_+(u;2\ve)(|x_1|-u)+u^p,
&x\in\Omega_{F^-}(-\infty,-2\ve)\cup\Omega_{F^+}(2\ve,\infty),\\
x_2^{p-1}|x_1|^{2-p},&x\in\Omega_L(-\ve)\cup\Omega_L(\ve),
\rule[10pt]{0pt}{7pt}\\
\rule[10pt]{0pt}{7pt}
(2\ve)^{p-2}x_2,&x\in\Omega_T(0),
\end{cases}
}
where
$$
u=|x_1|+\ve-\sqrt{\ve^2-x_2+x_1^2}.
$$
The corresponding foliation is shown on Figure~\ref{fig66}.
\section{Bellman Induction}
\label{induction}
In this section, we first establish the local concavity/convexity properties
of the global Bellman candidates in the whole domain $\Omega_\ve$, i.e. show
that each candidate $G$ satisfies either
\eq[i1]{
G(\alpha_-x^-+\alpha_+x^+)\ge\alpha_-G(x^-)+\alpha_+G(x^+)
}
or
\eq[i2]{
G(\alpha_-x^-+\alpha_+x^+)\le\alpha_-G(x^-)+\alpha_+G(x^+)
}
for all non-negative numbers $\alpha_\pm$ such that $\alpha_-+\alpha_+=1$
and all $x^-,x^+\in\Oe$ such that the entire line segment $[x^-,x^+]$
is inside $\Oe.$ With this in hand, we then use induction
on scales to show that each candidate appropriately majorates (minorates)
the Bellman function for which it was constructed.

Before verifying~\eqref{i1} or~\eqref{i2} for our Bellman candidates,
we need to make two observations. First, note that we can
consider~\eqref{i1} or~\eqref{i2} as the statement that the derivative
of $G$ along every direction is decreasing (respectively, increasing)
in that direction; thus, each property can be checked locally. We know
from Section~\ref{global} that each global candidate $G$ 
has the Hessian of the appropriate sign --- and so
the required monotonicity of the derivatives --- in every canonical
subdomain. Therefore, the only places where~\eqref{i1} or~\eqref{i2}
needs to be verified are the points where the segment $[x^-,x^+]$
intersects the boundaries between subdomains. For each such boundary,
that verification will take the form of measuring the jump in the
derivative along any direction transversal to the boundary. While
in general our global candidates $G$ are not guaranteed to be smooth
at such points, we do have one-sided transversal derivatives everywhere
and can check the sign of that jump. If that jump turns out to be
$0$ at all points of the boundary, that means that the two solutions
are, in fact, glued $C^1$-smoothly. In this case, we will automatically
obtain a global candidate satisfying either~\eqref{i1} or~\eqref{i2}
in the union of the two subdomains.

The second important observation is the following: whenever we glue
two subdomains, it is always along a line that is an extremal trajectory
for both local foliations. For this reason, while technically we need to
check the sign of the jump in the derivative at all points of the shared
boundary and in all directions transversal to it, it suffices to do so
at a single boundary point, in a single transversal direction. Let us
explain: recall that for any solution $G$ of the \ma equation, the
gradient $(G_{x_1},G_{x_2})=(t_1,t_2)$ is constant along any extremal
line; hence, the jump in the vector $(t_1,t_2)$ is constant along the
boundary and we can measure it at any point. On the other hand, consider
new coordinates $(y_1,y_2),$ where $y_1$ is directed along the shared
boundary and $y_2,$ in a fixed transversal direction. Then $G_{y_1}$
is continuous (so no jump in that component), because both solutions are
linear functions along $y_1$ and they were glued continuously. Thus, we
only need to measure the jump in $G_{y_2}.$

We are now in a position to prove the following

\begin{lemma}
\label{conc}
~
\ben
\item
The function $M_{\ve,p}$ given by~\eqref{ff6} satisfies~\eqref{i1}
for $1\le p\le 2$ and~\eqref{i2} for $p\ge2.$
\item
The function $N_{\ve,p}$ given by~\eqref{m3} satisfies~\eqref{i2}
for $1\le p\le 2$ and~\eqref{i1} for $p\ge2.$
\item
The function $P_{\ve,p}$ given by~\eqref{h12} satisfies~\eqref{i1}
for $0<p<1.$
\item
The function $R_{\ve,p}$ given by~\eqref{h28} satisfies~\eqref{i2}
for $0<p\le1.$
\een
\end{lemma}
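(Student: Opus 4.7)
My approach will exploit the two reductions stated in the paragraphs preceding the lemma. Within each canonical subdomain, the sign of the Hessian of the relevant building block has already been pinned down in Section~\ref{global}: for each $F^\pm$-piece, $\sgn(F^\pm_{x_2x_2})=\sgn(\tau_\pm)$ with $\tau_\pm$ computed in~\eqref{h2.5}, \eqref{hh27.6} and~\eqref{h4.5}; $(L_0)_{x_2x_2}$ has sign $\sgn(p-2)$; each $L^\pm$-piece has Hessian sign governed by~\eqref{g20.1}; and each $T$-piece is linear. A direct comparison shows that in every case these signs are of the type claimed in the lemma. What remains is to control the jumps of a transversal derivative across each internal gluing line, and since every one of our gluings takes place along a straight line that is an extremal trajectory of both adjacent candidates, it is enough to evaluate the jump of the single component $G_{x_2}$ at one convenient point.

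For $M$, the only internal boundary in $\Oe^+$ is $x_2=\ve^2$; using $m(\ve)=0$ and~\eqref{ff009} yields $F^+_{x_2}|_{u=\ve}=\tfrac12 m'(\ve)=\tfrac p2\ve^{p-2}=(L_0)_{x_2}|_{x_2=\ve^2}$, so the gluing is $C^1$. For $N$, the only internal boundary in $\Oe^+$ is $x_2=2\ve x_1$, and a parallel computation using~\eqref{ff010} and $m_-(0)=p\ve^{p-1}\Gamma(p)$ gives $F^-_{x_2}|_{u=0}=\tfrac p2\ve^{p-2}\Gamma(p)=(T_0)_{x_2}$; again $C^1$. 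For $P$, the $L_0/F^+(\ve,\xi)$ boundary is handled exactly as in $M$, while on the two edges of $\Omega_T(\xi)$ the identity derived in subsection~\ref{t} shows that the product of the two jumps equals $-\tfrac1{16\ve^2}(m_++m_--2f'(\xi))^2$; the parameter $\xi=\mu_*\ve$ is chosen precisely so as to make~\eqref{h7.1} hold, hence both jumps vanish individually by Corollary~\ref{mu}. The one remaining within-subdomain check for $P$ is that $\tau_+\le 0$ on \emph{all} of $[\ve,\xi]$, not merely at $u=\xi$; I will deduce this from the observation that $\phi(\mu):=e+(p-1)\int_1^\mu t^{p-3}e^t\,dt$ is monotone decreasing in $\mu$ for $p<1$, so the pointwise inequality~\eqref{h9} (which is $\phi(\mu_*)\ge 0$) propagates backwards to give $\phi\ge 0$ on all of $[1,\mu_*]$.

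For $R$, within-subdomain convexity is immediate from $L^-_{x_2x_2}=(p-1)(p-2)x_2^{p-3}x_1^{2-p}>0$, the linearity of $T_0$, and~\eqref{hh27.6} (for which $(p-1)(p-2)>0$ and the bracket is positive, giving $\tau_+>0$ on $[2\ve,\infty)$). Two edges in $\Oe^+$ lie on the line $x_2=2\ve x_1$. On the segment between $\Omega_L(\ve)$ and $\Omega_{F^+}(2\ve,\infty)$, using $m(2\ve)=(2\ve)^{p-1}$ together with~\eqref{ff009} one finds $F^+_{x_2}=(p-1)(2\ve)^{p-2}=L^-_{x_2}$, so the gluing is $C^1$. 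On the segment between $\Omega_L(\ve)$ and $\Omega_T(0)$, the jump as we cross from below to above is $(2\ve)^{p-2}-(p-1)(2\ve)^{p-2}=(2-p)(2\ve)^{p-2}>0$, exactly the sign required for convexity. I expect the main obstacle to be organizational rather than conceptual: checking in each case that the shared boundary genuinely is an extremal line for both sides (which legitimates the one-point, one-direction reduction), and then executing each of the jump computations above without slips. The only genuinely analytic step beyond that is the monotonicity argument for $\phi$ that upgrades the pointwise bound~\eqref{h9} to a uniform sign for $\tau_+$ on all of $[\ve,\xi]$.
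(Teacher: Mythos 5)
Your proposal is correct and follows essentially the same route as the paper: verify the Hessian signs block by block using the computations of Section~\ref{global}, then check the jump of $t_2=G_{x_2}$ at a single point on each internal boundary, with the $\Omega_T(\xi)$ edges handled via condition~\eqref{h7.1} and Corollary~\ref{mu}, and the $\Omega_T(0)/\Omega_L(\ve)$ edge giving the one nonzero jump of the correct sign. Your explicit monotonicity argument upgrading $\tau_+(\xi)\le0$ to $\tau_+\le0$ on all of $[\ve,\xi]$ is a welcome (correct) elaboration of a point the paper treats only implicitly in Section~\ref{global}.
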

\begin{proof}
According to the preceding discussion, we need to check all boundaries
between subdomains for each candidate. Since such boundaries are never
parallel to the $x_2$-axis, we can choose $x_2$ as our transversal
direction in all cases. Therefore, to prove the lemma, we will use
the following procedure: for each specific candidate $G$ and each
boundary between two of its subdomains, pick one point $x=(x_1,x_2)$
on the boundary and verify that $t_2(x^+)\le t_2(x^-)$ for~\eqref{i1}
and $t_2(x^+)\ge t_2(x^-)$ for~\eqref{i2}, where $t_2(x^\pm)=
G_{x_2}(x_1,x_2\pm0).$

In all cases, by symmetry it is sufficient to check only those
subdomain boundaries that are in $\Oe^+.$
\ben
\item
\label{pp1}
In the case of $M_{\ve,p},$ we check the jump in $t_2$ along the line
$x_2=\ve^2$ separating $\Omega_L(0)$ and $\Omega_{F^+}(\ve,\infty).$
From~\eqref{ff008} and~\eqref{ff009}, we have, for any point $x$ on
this line,
$$
t_2(x^+)=\frac12m'(\ve)=\frac1{2\ve}(p\ve^{p-1}-m(\ve)),
$$
where $m(\ve)=0$ from \eqref{h4.1} with $u=u_1=\ve.$ Thus, we have
$t_2(x^+)=\frac p2\ve^{p-2}.$

On the other hand, from \eqref{ff6}  $t_2(x^-)=\frac p2x_2^{p/2-1}
=\frac p2(\ve^2)^{p/2-1}=\frac p2\ve^{p-2}.$ Therefore, the derivative
jump is zero for any $p$ and the first statement of the lemma is proved.
\item
For $N_{\ve,p},$ we check the jump in $t_2$ along the line $x_2=
2\ve x_1$ separating $\Omega_T(0)$ and $\Omega_{F^-}(0,\infty).$
From~\eqref{ff008} and~\eqref{ff010}, we have, for any point $x$
on this line,
$$
t_2(x^-)=\frac12m'(0)=\frac1{2\ve}m(0),
$$
where
$m(0)=p\ve^{p-1}\int_0^\infty t^{p-1}e^{-t}\,dt=p\ve^{p-1}\Gamma(p)$
from \eqref{h4.2} with $u=0$ and $u_2=\infty.$ Thus, we have $t_2(x^-)
=\frac p2\ve^{p-2}\Gamma(p).$

On the other hand, from \eqref{m3} $t_2(x^+)=\frac p2\ve^{p-2}\Gamma(p).$
Again, the derivative jump is zero and the second statement is proved.
\item
For $P_{\ve,p},$ we need to check three boundary lines: $x_2=\ve^2$
between $\Omega_L(0)$ and $\Omega_{F^+}(\ve,\xi);$ $x_2=2(\xi+\ve)x_1
-2\xi\ve-\xi^2$ between $\Omega_{F^+}(\ve,\xi)$ and $\Omega_T(\xi);$
and $x_2=2(\xi-\ve)x_1+2\xi\ve-\xi^2$ between $\Omega_T(\xi)$ and
$\Omega_{F^-}(\xi,\infty).$ The first verification is the same as
in part~\eqref{pp1} above. The second and third are automatic: the value of $\xi=\mu\ve$ in~\eqref{h12} was chosen according to Corollary~\ref{mu}, which ensured that
condition~\eqref{h7.1} is satisfied. That condition, in turn, was a criterion for having zero jump in $t_2$ across each bounding tangent of $\Omega_T(\xi).$
\item
For $R_{\ve,p},$ we have two segments of the same line to check: for
$0\le x_1\le \ve,$ the line $x_2=2\ve x_1$ separates $\Omega_T(0)$
and $\Omega_L(\ve);$ for $\ve\le x_1\le 2\ve,$ the same line separates
$\Omega_L(\ve)$ and $\Omega_{F^+}(2\ve,\infty).$

In the first case, $t_2(x^+)=(2\ve)^{p-2}$ and $t_2(x^-)=
(p-1)x_2^{p-2}x_1^{2-p}$ from \eqref{h28}. When $x_2=2\ve x_1,$ we
have $t_2(x^-)=(p-1)(2\ve)^{p-2}$ and so $t_2(x^+)-t_2(x^-)=
(2\ve)^{p-2}(2-p)>0,$ which is consistent with $R_{\ve,p}$ being a
lower Bellman candidate.

In the second case, $t_2(x^+)$ is given, similarly to part~\eqref{pp1},
by $t_2(x^+)=\frac12m'(2\ve)=\frac1{2\ve}(p(2\ve)^{p-1}-m(2\ve)),$
where $m(2\ve)=(2\ve)^{p-1}$ from~\eqref{h4.1} with $u=u_1=2\ve.$
Thus, $t_2(x^+)=(p-1)(2\ve)^{p-2}.$ On the other hand, $t_2(x^-)
=(p-1)(2\ve)^{p-2},$ as before, and again we have $C^1$ smoothness of
$R_{\ve,p}.$
\een
\end{proof}

We will make use of the following geometric result, whose proof can be
found in~\cite{sv}.
\begin{lemma}
\label{l4c}
Fix $\ve>0.$ Take any $\delta>\ve.$ Then for every interval $I$ and
every $\varphi\in\BMO_{\ve}(I),$ there exists a splitting $I=I_-\cup I_+$
such that the whole straight-line segment with the endpoints
$x^{\pm}=\left(\av{\varphi}{I_{\pm}},\av{\varphi^2}{I_{\pm}}\right)$ is
inside $\Omega_\delta.$ Moreover\textup, the splitting parameter
$\alpha_+=|I_+|/|I|$ can be chosen uniformly \textup(with respect to
$\varphi$ and $I$\textup) separated from $0$ and $1.$
\end{lemma}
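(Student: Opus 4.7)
The plan is to exhibit the splitting via a continuous parametrization of cut points and a sign/continuity argument. For $c \in (a,b)$ write $I_-(c) = [a,c]$, $I_+(c) = [c,b]$, $\alpha_\pm(c) = |I_\pm|/|I|$, $x^\pm(c) = (\langle \varphi \rangle_{I_\pm}, \langle \varphi^2 \rangle_{I_\pm})$, $d(c) = x_1^+ - x_1^-$, and $y_\pm(c) = x_2^\pm - (x_1^\pm)^2$. Each $x^\pm(c) \in \Omega_\varepsilon$ since $\varphi$ restricted to any subinterval still lies in $\mathrm{BMO}_\varepsilon$, so $y_\pm(c) \in [0,\varepsilon^2]$. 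A direct decomposition of the variance of $\varphi$ over $I$ yields the key identity
\begin{equation*}
\alpha_-\alpha_+ d^2 + \alpha_- y_- + \alpha_+ y_+ = y_I, \qquad y_I := \langle \varphi^2 \rangle_I - \langle \varphi \rangle_I^2 \le \varepsilon^2.
\end{equation*}

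Next, parametrize the chord $[x^-, x^+]$ by $s\in[0,1]$ and let $y(s) = x_2(s) - x_1(s)^2$ measure its height above the lower parabola $x_2 = x_1^2$. A short computation gives $y(s) = s(1-s)d^2 + (1-s)y_- + sy_+$, a concave quadratic satisfying $y(\alpha_+) = y_I$ (since $s = \alpha_+$ recovers the averages $x_I$ on the chord). Expanding at $s = \alpha_+$,
\begin{equation*}
\max_{s \in [0,1]} y(s) \;=\; y_I + \frac{[y'(\alpha_+)]^2}{4d^2}, \qquad y'(\alpha_+) = d^2(\alpha_- - \alpha_+) + (y_+ - y_-),
\end{equation*}
whenever the unconstrained maximizer lies in $[0,1]$; otherwise the maximum is attained at an endpoint of $[0,1]$ and is bounded by $\max(y_-,y_+) \le \varepsilon^2 < \delta^2$, so the chord lies in $\Omega_\delta$ for free. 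In general, the chord lies in $\Omega_\delta$ exactly when $[y'(\alpha_+)]^2 \le 4d^2(\delta^2 - y_I)$.

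The cleanest way to meet this condition is to find $c^*$ with $y'(\alpha_+(c^*)) = 0$, which immediately gives $y_{\max} = y_I < \delta^2$. Define $F(c) := d(c)^2(\alpha_-(c)-\alpha_+(c)) + y_+(c) - y_-(c)$. Then $F$ is continuous on $(a,b)$; the first term is dominantly negative as $c \to a$ (where $\alpha_- - \alpha_+ \to -1$) and dominantly positive as $c \to b$ (where it tends to $+1$), while $y_+ - y_-$ remains bounded by $\varepsilon^2$. An application of the intermediate value theorem then produces a zero $c^* \in (a,b)$ of $F$.

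The hard part will be the quantitative statement that $\alpha_+(c^*)$ can be bounded by constants $\gamma_0, 1-\gamma_0$ depending only on $\varepsilon$ and $\delta$, not on $\varphi$ or $I$. If the sign analysis of $F$ fails to pin $c^*$ into such a range — for instance when $d(c)$ is small away from the midpoint, so the boundary-term contributions to $F$ dominate — the fallback is to exploit the slack $\delta > \varepsilon$ directly: the key identity yields $d^2 \le \varepsilon^2/(\alpha_-\alpha_+)$, and combining this with $y_\pm \le \varepsilon^2$ produces an upper bound on $y_{\max}$ on any compact subrange $\alpha_+ \in [\gamma_0, 1-\gamma_0]$ that is strictly less than $\delta^2$ once $\gamma_0$ is chosen sufficiently small depending on the ratio $\delta/\varepsilon$. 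This quantitative step is where the ``separation from $0$ and $1$'' statement crystallizes.
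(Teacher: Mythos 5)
The paper does not actually prove this lemma --- it cites \cite{sv} for the proof --- so your attempt has to stand on its own, and as written it has two genuine gaps. First, the intermediate-value step is not valid. Your quantity $F(c)=d^2(\alpha_--\alpha_+)+y_+-y_-$ simplifies to $F(c)=\av{h}{I_+(c)}-\av{h}{I_-(c)}$ with $h=\varphi^2-2\av{\varphi}{I}\,\varphi$ (a fixed linear functional of the Bellman point), and nothing forces this to change sign: if $|\varphi-\av{\varphi}{I}|$ is increasing, $h$ is increasing and $F>0$ on all of $(a,b)$. A concrete example on $(0,1)$ is $\varphi(t)=-\lambda t$ for $t<1/\sqrt2$ and $\varphi(t)=\lambda t$ for $t>1/\sqrt2$ (mean zero, bounded, so in $\BMO_\ve$ for small $\lambda$), for which $h(t)=\lambda^2t^2$ and $F(c)>0$ for every $c$; moreover $F(c)\to\av{h}{I}-h(a^+)>0$ as $c\to a$, contradicting your claim that the first term is ``dominantly negative'' near $a$ (the term $d^2(\alpha_--\alpha_+)$ need not dominate, since $d$ can be small while $y_+-y_-$ stays of size $y_I$). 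So the ``cleanest way'' simply may have no solution $c^*$, and even when it does, nothing in your argument keeps $\alpha_+(c^*)$ away from $0$ and $1$, which is the clause the Bellman induction actually needs.

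Second, the fallback cannot close the gap for all $\delta>\ve$. Using only $y_\pm\le\ve^2$, $y_I\le\ve^2$ and the identity $\alpha_-\alpha_+d^2+\alpha_-y_-+\alpha_+y_+=y_I$, the chord height is not forced below $\delta^2$: already at $\alpha_+=\tfrac12$ the admissible configuration $y_-=0$, $y_+=\ve^2$, $d^2=2\ve^2$ gives $y_I=\ve^2$ and a vertex height $\tfrac98\ve^2$, and allowing $\alpha_+$ in a larger range $[\gamma_0,1-\gamma_0]$ only worsens the bound (it grows like $\ve^2/\gamma_0$), so choosing $\gamma_0$ ``sufficiently small'' goes in the wrong direction. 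Hence your ingredients prove the lemma only when $\delta/\ve$ is bounded away from $1$ (roughly $\delta^2\ge\tfrac98\ve^2$), whereas the statement requires every $\delta>\ve$; in that regime one must use information about the Bellman points of intervals straddling the cut, not just the three points $x^-,x^I,x^+$, and one must allow the splitting ratio to become lopsided in a way controlled by $\delta/\ve$. That quantitative step --- precisely the ``uniformly separated from $0$ and $1$'' assertion --- is the heart of the lemma and is left unproved in your proposal; the argument in \cite{sv} chooses the cut adaptively by a different geometric criterion rather than by the vertex condition $F=0$.
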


We need another simple lemma that says that cutting a $\BMO$ function
off at a given height does not increase its norm, which is implicitly
contained in~\cite{sv} as well.

\begin{lemma}
\label{norm}
Fix $\varphi\in\BMO(\rn)$ and $c,d\in\mathbb{R}$ such that $c<d.$ Let
$\varphi_{c,d}$ be the cut-off of $\varphi$ at heights $c$ and $d:$
\eq[cutoff]{
\varphi_{c,d}(s)=
\begin{cases}
\ c,&if~\varphi(s)\le c;\\
\varphi(s),&if~c<\varphi(s)<d;\\
\ d,&if~\varphi(s)\ge d.
\end{cases}
}
Then
$$
\av{\varphi_{c,d}^2}J-\av{\varphi_{c,d}}J^2\le\av{\varphi^2}J-
\av\varphi J^2,\quad\forall\text{~cube~}J,
$$
and\textup, consequently\textup,
$$
\|\varphi_{c,d}\|_{\BMO}\le\|\varphi\|_{\BMO}.
$$
\end{lemma}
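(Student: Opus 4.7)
The plan is to reduce everything to a pointwise inequality by using the double-integral representation of variance. For any measurable function $f$ and cube $J$ with $|J|<\infty$, a direct expansion shows
$$
\av{f^2}J-\av{f}J^2=\frac{1}{2|J|^2}\int_J\!\!\int_J\bigl(f(s)-f(t)\bigr)^2\,ds\,dt.
$$
Applied with $f=\varphi$ and $f=\varphi_{c,d}$, the desired variance inequality is reduced to showing that $(\varphi_{c,d}(s)-\varphi_{c,d}(t))^2\le(\varphi(s)-\varphi(t))^2$ pointwise, after which we integrate over $J\times J$ and divide by $2|J|^2$.

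The pointwise inequality is immediate from the fact that the cutoff map $\pi(x)=\max(c,\min(d,x))$ is the nearest-point projection of $\mathbb{R}$ onto the convex interval $[c,d]$, and hence is a $1$-Lipschitz function of $x$. Since $\varphi_{c,d}=\pi\circ\varphi$, we obtain $|\varphi_{c,d}(s)-\varphi_{c,d}(t)|\le|\varphi(s)-\varphi(t)|$ for all $s,t$. Squaring and integrating proves the first inequality of the lemma; taking the supremum over cubes $J\subset Q$ and then the $1/2$-power yields $\|\varphi_{c,d}\|_{\BMO}\le\|\varphi\|_{\BMO}$.

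There is no genuine obstacle here — the only ingredient beyond the $1$-Lipschitz property of the truncation is the identification of $\av{f^2}J-\av{f}J^2$ with a symmetric double integral of squared differences, and both steps are elementary. If one prefers to avoid the double-integral identity, the same pointwise contraction can instead be combined with the variational characterization $\av{f^2}J-\av{f}J^2=\inf_{c\in\mathbb{R}}\av{(f-c)^2}J$, applied with the minimizer $c=\av{\varphi}J$ tested against $\varphi_{c,d}$; but the double-integral route is cleanest.
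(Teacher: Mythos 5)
Your proof is correct, but it takes a genuinely different route from the paper. You use the double-integral representation
$\av{f^2}J-\av{f}J^2=\frac{1}{2|J|^2}\int_J\!\int_J(f(s)-f(t))^2\,ds\,dt$
together with the fact that the truncation $\pi(x)=\max(c,\min(d,x))$ is $1$-Lipschitz, so that $(\varphi_{c,d}(s)-\varphi_{c,d}(t))^2\le(\varphi(s)-\varphi(t))^2$ pointwise; integrating gives the variance inequality for the two-sided cut in one stroke. The paper instead first reduces to a one-sided cut (writing $\varphi_{c,d}=-C_{-c}(-C_d\varphi)$), then splits the cube $J$ into the sets $\{\varphi\le d\}$ and $\{\varphi>d\}$ and verifies an explicit algebraic identity for the difference of the two variances, whose sign is read off from $\av{\varphi}{J_1}\le d\le\av{\varphi}{J_2}$. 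Your argument is shorter and more conceptual, and it proves more: the same contraction argument shows that post-composition with \emph{any} $1$-Lipschitz map does not increase the $L^2$ oscillation, whereas the paper's identity is specific to truncations (though it is completely elementary to verify and stays within the language of averages used throughout). One caution about your closing remark: if you use the variational characterization $\av{f^2}J-\av{f}J^2=\inf_m\av{(f-m)^2}J$ instead, the constant you should test against is $\pi(\av{\varphi}J)$ rather than $\av{\varphi}J$ itself, since the pointwise bound $|\varphi_{c,d}(s)-\av{\varphi}J|\le|\varphi(s)-\av{\varphi}J|$ can fail when $\av{\varphi}J\notin[c,d]$; with $m=\pi(\av{\varphi}J)$ the Lipschitz property of $\pi$ gives exactly what is needed. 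This does not affect your main argument, which is complete as written.
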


\begin{proof}
First, let us note that it is sufficient to prove this lemma for a
one-sided cut, for example, for $c=-\infty.$ We then get the full
statement by applying this argument twice. Indeed, if we denote by
$C_d\varphi$ the cut-off of $\varphi$ from above at height $d,$ i.e.
$C_d\varphi=\varphi_{-\infty,d},$ then $\varphi_{c,d}=-C_{-c}(-C_d\varphi$).

Take a cube $J$ and let $J_1=\{s\in J\colon \varphi(s)\le d\}$ and
$J_2=\{s\in J\colon \varphi(s)>d\}.$ If either $J_1=\emptyset$ or
$J_2=\emptyset,$ the statement is trivial. Thus, we may assume that
$J_k\ne\emptyset.$ Let $\beta_k=|J_k|/|J|, k=1,2.$
We have the following identity:
\begin{align*}
\bigl[\av{\varphi^2}J&-\av\varphi J^2\bigr] -\bigl[\av{(C_d\varphi)^2}J
-\av{C_d\varphi}J^2\bigr]\\
=&\beta_2\bigl[\av{\varphi^2}{J_2}-\av\varphi {J_2}^2\bigr]+
\beta_1\beta_2\bigl[\av\varphi{J_2}-d\bigr]\bigl[\av\varphi{J_2}+d-
2\av\varphi{J_1}\bigr],
\end{align*}
which proves the lemma, because $\av\varphi{J_1}\!\!\le d\le \av\varphi{J_2}.$
\end{proof}

The following is the main result of this section. Its statement is
similar to --- if much more general than --- that of  Lemma~2c
from~\cite{sv}. The proof, by induction on pseudo-dyadic scales, is
somewhat streamlined compared to that in \cite{sv},  although its
main ingredients are the same.

\begin{lemma}
\label{l2c}
~
\noindent Fix $\ve>0$ and let $B$ and $b$ be two functions defined
and continuous on $\Omega_\delta$ for some $\delta>\ve.$ Assume that
$B$ has property~\eqref{i1} and $b$ has property~\eqref{i2} on
$\Omega_\delta.$ Let $W(t)=B(t,t^2),$ $w(t)=b(t,t^2).$ If either
$W$ or $w$ is unbounded at $-\infty$ or $+\infty,$ assume it is
monotone for $t$ sufficiently close to $-\infty$ or $+\infty,$
respectively.

Fix a point $x\in\Oe$ and an interval $Q$ and take any function
$\varphi\in BMO_\ve(Q)$ such that $(\av{\varphi}Q,\av{\varphi^2}Q)=x.$
Then
$$
B(x)\ge\av{W(\varphi)}Q,
$$
$$
b(x)\le\av{w(\varphi)}Q,
$$
including the possible infinite values on either side of each inequality.
\end{lemma}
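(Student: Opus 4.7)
The plan is to prove the $B$ statement (the $b$ statement is symmetric: replace $\ge$ with $\le$ throughout) by the classical Bellman induction on pseudo-dyadic scales, followed by a passage to the limit. I would split the argument into two stages: a bounded reduction via the cut-off lemma \ref{norm}, and the main induction for bounded $\varphi.$

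First I would reduce to the case $\varphi\in L^\infty(Q).$ For $c<d$ let $\varphi_{c,d}$ be the cut-off from Lemma~\ref{norm}; this function is bounded and, by that lemma, still lies in $\BMO_\ve(Q).$ Its Bellman point $x_{c,d}\df(\av{\varphi_{c,d}}Q,\av{\varphi_{c,d}^2}Q)$ converges to $x$ as $c\to-\infty$ and $d\to+\infty$ (by dominated convergence applied to $\varphi\mathbf 1_{[c,d]}(\varphi)$ and $\varphi^2\mathbf 1_{[c,d]}(\varphi)$ on the bounded sets, and by monotonicity plus integrability of $\varphi$ on $Q$ to handle the tails). Assuming the bounded case has been established, $B(x_{c,d})\ge\av{W(\varphi_{c,d})}Q.$ On the left, continuity of $B$ on $\Omega_\delta$ gives $B(x_{c,d})\to B(x).$ On the right, the monotonicity assumption on $W$ at $\pm\infty$ makes $W(\varphi_{c,d})$ monotone in $c$ and $d$ (upward or downward, uniformly over $Q$, once $c,d$ are past the monotonicity threshold), so monotone convergence yields $\av{W(\varphi_{c,d})}Q\to\av{W(\varphi)}Q$ (possibly $+\infty$). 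This is the place where the hypothesis on $W$ at infinity is essential.

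For bounded $\varphi,$ I would iterate Lemma~\ref{l4c}. Choose $\delta>\ve$ as in the hypothesis; the lemma produces, for every $\psi\in\BMO_\ve(I),$ a split $I=I_-\cup I_+$ with $[x^-,x^+]\subset\Omega_\delta$ and with $\alpha_\pm=|I_\pm|/|I|$ bounded away from $0$ and $1$ uniformly. Applying \eqref{i1} once gives $B(x)\ge\alpha_-B(x^-)+\alpha_+B(x^+).$ Iterating on each subinterval (on which the restriction of $\varphi$ is still in $\BMO_\ve$) and repeating $n$ times produces a partition $\mathcal P_n$ of $Q$ into pseudo-dyadic subintervals, with
\[
B(x)\ \ge\ \sum_{J\in\mathcal P_n}\frac{|J|}{|Q|}\,B(x^J),\qquad x^J=(\av{\varphi}J,\av{\varphi^2}J).
\]
The uniform lower bound on the split parameters forces $\max_{J\in\mathcal P_n}|J|\to0.$ Defining $\psi_n(s)=B(x^J)$ for $s\in J\in\mathcal P_n,$ the inequality becomes $B(x)\ge\av{\psi_n}Q.$ By the Lebesgue differentiation theorem, $x^J\to(\varphi(s),\varphi(s)^2)$ for a.e.~$s,$ hence $\psi_n(s)\to B(\varphi(s),\varphi(s)^2)=W(\varphi(s))$ a.e.~by continuity of $B.$ Because $\varphi$ is bounded and all $x^J$ lie in a compact subset of $\Omega_\delta$ (in fact, in $\Omega_\ve$), $\{\psi_n\}$ is uniformly bounded, and dominated convergence delivers $B(x)\ge\av{W(\varphi)}Q.$

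The principal obstacle is the passage to the limit in the unbounded case: one must simultaneously handle the convergence of the Bellman point (which is easy, since $\varphi\in L^1(Q)$ always) and the convergence of the integral $\av{W(\varphi_{c,d})}Q,$ which can blow up. Without the monotonicity assumption on $W$ at $\pm\infty$ there is no evident way to compare $W(\varphi_{c,d})$ with $W(\varphi)$ in a sign-definite manner, so monotone convergence fails and the argument breaks. A secondary technical point is to verify that the iterated splittings produce a grid whose mesh shrinks to zero and on which Lebesgue differentiation applies; this is where the uniform-in-$\varphi$ lower bound on $\alpha_\pm$ in Lemma~\ref{l4c} is decisive.
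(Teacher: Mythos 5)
Your proposal is correct and follows essentially the same route as the paper's proof: the bounded case by iterating the splitting of Lemma~\ref{l4c}, using property~\eqref{i1}, the shrinking mesh from the uniform bound on $\alpha_\pm$, the Lebesgue differentiation theorem, and dominated convergence; then the general case by the cut-offs of Lemma~\ref{norm}, continuity of $B$ on $\Omega_\delta$, and monotone convergence (where the monotonicity hypothesis on $W$ at $\pm\infty$ enters), with the paper merely performing the two limits $c\to-\infty$ and $d\to+\infty$ sequentially rather than jointly. No gaps.
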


\begin{proof}
We will only prove the statement of the lemma concerning $B,$ as the part
concerning $b$ is virtually identical. We first establish the result
for those $\varphi\in\BMO_\ve(Q)$ that are bounded and then approximate
arbitrary \BMO\ functions by appropriately chosen cut-offs.

Take $\varphi\in\BMO_\ve(Q)\cap L^\infty(Q).$
Observe that $\varphi\in \BMO_{\ve}(I)$ for any subinterval $I$ of $Q.$
We now build a binary tree $D(Q)$ of subintervals of $Q,$ where every
interval $I\in D(Q)$ is split into two subintervals $I_\pm\in D(Q)$
according to the rule from Lemma~\ref{l4c}. The set of intervals of
the $n$-th generation will be denoted by $D_n(Q),$ so $D_0(Q)=\{Q\},$
$D_1(Q)=\{Q_\pm\},$ etc. For every interval $I\in D(Q),$ let
$x^I\in\Omega_\ve$ be the corresponding Bellman point, $x^I=
\left(\av\varphi I,\av{\varphi^2}I\right).$ Let $x^{(n)}$ denote the
step function from $Q$ into $\Omega_\ve,$ defined by the rule
$x^{(n)}(t)=x^I$ if $t\in I,$ $I\in D_n(Q)$. Since Lemma~\ref{l4c}
provides for the value of $\alpha_+$ uniformly separated from $0$ and $1$
on every step, we have
$$
\max_{I\in D_n(Q)}\left\{|I|\right\}\to0\quad\text{as}\quad n\to\infty.
$$
By the Lebesgue differentiation theorem, we have
$x^{(n)}(t)\to(\varphi(t),\varphi^2(t))$ almost everywhere. Since
$\varphi$ is assumed bounded, $\{x^{(n)}\}$ is a sequence of bounded
functions.

For each of the splits prescribed by Lemma~\ref{l4c}, the line segment
connecting $x^{I_-},$ $x^I,$ and $x^{I_+}$ lies in $\Omega_\delta.$
Using the property \eqref{i1} of $B$ repeatedly, we get, for any $n\ge 1,$
\begin{align*}
|Q|B(x^Q)&\ge|Q_+|B(x^{Q_+})+|Q_-|B(x^{Q_-})
\\
&\ge\sum_{I\in D_n(Q)}\!\!\!|I|B(x^I)=\int_Q \!\!B(x^{(n)}(t))\,dt.
\end{align*}
Since $B$ is continuous on $\Omega_\delta$ (and thus on $\Oe$), the
dominated convergence theorem applies and taking the limit as
$n\to\infty$ proves the lemma for bounded $\varphi.$

Take now an arbitrary $\varphi\in\BMO_\ve(Q).$ For $c, d\in\mathbb{R}$
such that $c<d,$ let $\varphi_{c,d}$ be defined by~\eqref{cutoff}. We
have $\varphi_{c,d}\in L^\infty(Q)$ and, by Lemma~\ref{norm},
$\|\varphi_{c,d}\|_\BMO\le\ve.$ Therefore,
$$
B(\av{\varphi_{c,d}}Q,\av{\varphi^2_{c,d}}Q)\ge\av{W(\varphi_{c,d})}Q.
$$
We now take the limit in this inequality as $c\to-\infty.$ Since $B$
is continuous, the limit of the left-hand side is
$B(\av{\varphi_{-\infty,d}}Q,\av{\varphi^2_{-\infty,d}}Q),$ where
$\varphi_{-\infty,d}\df d\chi_{\{\varphi\ge d\}}+
\varphi\chi_{\{\varphi< d\}}.$ On the other hand, by the monotone
convergence theorem, the limit of the right-hand side is
$\av{W(\varphi_{-\infty,d})}Q.$ The same argument works if we let
$d\to\infty,$ which completes the proof.
\end{proof}

As an immediate corollary, we obtain the following
\begin{theorem}
\label{imp1}
For any $x\in\Oe,$ we have
$$
\begin{array}{lll}
For~p\ge2:\quad
&N_{\ve,p}(x)\ge\bel{B}_{\ve,p}(x),
&\quad M_{\ve,p}(x)\le\bel{b}_{\ve,p}(x),
\\
For~1\le p< 2:\quad
&M_{\ve,p}(x)\ge\bel{B}_{\ve,p}(x),
&\quad N_{\ve,p}(x)\le\bel{b}_{\ve,p}(x),
\\
For~0<p<1:\quad
&P_{\ve,p}(x)\ge\bel{B}_{\ve,p}(x),
&\quad R_{\ve,p}(x)\le\bel{b}_{\ve,p}(x).
\end{array}
$$
\end{theorem}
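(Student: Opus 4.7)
The plan is to derive Theorem~\ref{imp1} as a direct application of Lemma~\ref{l2c} to the candidates $M_{\ve,p}, N_{\ve,p}, P_{\ve,p}, R_{\ve,p}$. By construction each candidate restricts on the parabolic boundary $x_2=x_1^2$ to $|x_1|^p$, so the function $W(t)=|t|^p$ appearing in Lemma~\ref{l2c} satisfies $\av{W(\varphi)}Q = \av{|\varphi|^p}Q$; the side hypothesis of the lemma is also met since $|t|^p$ is monotone for $t$ near $\pm\infty$. Lemma~\ref{conc} tells us, for each range of $p$, which candidate is locally concave (and so plays the role of $B$) and which is locally convex (the role of $b$); once the appropriate choice is made, Lemma~\ref{l2c} will yield an inequality $G(x)\ge\av{|\varphi|^p}Q$ or $G(x)\le\av{|\varphi|^p}Q$ for every $\varphi\in\BMO_\ve(Q)$ with Bellman point $x$, and passing to the supremum or infimum over admissible $\varphi$ will produce the six bounds claimed in the theorem.

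The one hypothesis of Lemma~\ref{l2c} not immediately in hand is the requirement that the candidate be defined and retain its concavity/convexity on an enlarged domain $\Omega_\delta$ for some $\delta>\ve$, whereas our candidates are built on $\Omega_\ve$ exactly. To bridge this gap, I would exploit the fact that the construction of Section~\ref{global} carries $\ve$ as a free parameter: for every $\delta>0$ one obtains candidates $M_{\delta,p}, N_{\delta,p}, P_{\delta,p}, R_{\delta,p}$ defined on $\Omega_\delta$ and satisfying the same parabolic boundary condition $|t|^p$, and the sign and derivative-jump verifications of Lemma~\ref{conc} go through verbatim with $\delta$ in place of $\ve$. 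Fix any $\delta>\ve$. Since $\BMO_\ve(Q)\subset\BMO_\delta(Q)$ and $\Omega_\ve\subset\Omega_\delta$, the $\delta$-indexed candidate is eligible in Lemma~\ref{l2c} for every $\varphi\in\BMO_\ve(Q)$, giving, for instance, $M_{\delta,p}(x)\ge\av{|\varphi|^p}Q$ in the range $1\le p\le 2$ and analogous inequalities for the other combinations.

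It then remains to pass to the limit $\delta\to\ve^+$. Inspecting the explicit formulas \eqref{ff6}, \eqref{m3}, \eqref{h12}, and \eqref{h28}, the defining ingredients---the roots $u_\pm$, the integrals $m_\pm$, the linear coefficients in $L_0$ and $T_0$, and the transition point $\xi=\mu\delta$, which scales linearly in $\delta$ because $\mu=\mu(p)$ is an absolute constant determined by~\eqref{h8}---are all elementary continuous functions of $\delta$. Hence the $\delta$-candidate converges pointwise on $\Omega_\ve$ to the $\ve$-candidate, the inequality survives under this limit, and taking supremum or infimum over $\varphi$ with the prescribed Bellman point completes the proof.

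I expect the continuity bookkeeping for $P_{\ve,p}$, whose foliation has seven pieces with the transition regimes $\Omega_T(\pm\xi)$ gluing $F^+$ and $F^-$ blocks in a nontrivial fashion, to be the most delicate step; however, since $\mu$ does not depend on $\ve$, the geometric picture simply scales linearly in $\delta$ and the convergence of each of the three types of pieces ($F^\pm$, $L_0$, $T$) is routine. In particular, there is no need to re-solve~\eqref{h8} as $\delta$ varies, which would have been the one potentially serious obstruction.
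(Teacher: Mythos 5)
Your proposal is correct and follows essentially the same route as the paper: apply Lemma~\ref{l2c} to the candidates built with a slightly larger parameter $\delta>\ve$ (which, by Lemma~\ref{conc}, satisfy the required local concavity/convexity on $\Omega_\delta$ and agree with $|t|^p$ on the lower parabola), take the supremum/infimum over admissible $\varphi\in\BMO_\ve(Q)$, and then let $\delta\to\ve$ using continuity of the candidates in the parameter. Your observation that $\mu$ solving~\eqref{h8} is independent of $\ve$, so that $\xi=\mu\ve$ scales linearly, is exactly the point that makes the continuity step for $P_{\ve,p}$ unproblematic, and matches the paper's brief appeal to continuity in $\ve$.
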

\begin{proof}
Let $B_\ve$ stand for any of the upper candidates in the statement of
the theorem and $b_\ve$ for any of the lower candidates. Since
each $B_\ve$ and each $b_\ve$ is continuous in $\ve,$ it is sufficient
to prove that
$$
B_\delta(x)\ge\bel{B}_{\ve,p}(x),\quad
b_\delta(x)\le\bel{b}_{\ve,p}(x),\quad\forall x\in\Oe,
$$
for all $\delta>\ve,$ and then let $\delta\to\ve.$

Observe that each $B_\delta$ and each $b_\delta$ satisfies the conditions
of Lemma~\ref{l2c} and $B_\delta(t,t^2)=b_\delta(t,t^2)=|t|^p.$ Therefore,
$$
B_\delta(x)\ge\av{|\varphi|^p}Q\,,\qquad b_\delta(x)\le\av{|\varphi|^p}Q\,,
$$
for all $\varphi\in\BMO_\ve(Q)$ with $(\av{\varphi}Q,\av{\varphi^2}Q)=x.$
Now, take the supremum over all such $\varphi$ in the first inequality
and infimum in the second.
\end{proof}
\section{Optimizers and converse inequalities}
\label{converse}
In this section we construct, for each Bellman candidate $G$ built in
Section~\ref{global} and each point $x\in\Oe,$ a test function
$\varphi_x$ on $(0,1)$ with the following three properties:
\eq[co1]{
\begin{array}{l}
(a)\quad\av{\varphi_x}{(0,1)}\!\!=x_1,\quad
\av{\varphi_x^2}{(0,1)}\!\!=x_2;
\\
(b)\quad \varphi_x\in\BMO_\ve((0,1));\rule{0pt}{15pt}
\\
(c)\quad \av{f(\varphi_x)}{(0,1)}\!\!=G(x).\rule{0pt}{18pt}
\end{array}
}
We will call each such function $\varphi_x$ {\it an optimizer} for
$G(x).$ We will often need optimizers for points on the top boundary
of $\Oe,$ the parabola $x_2=x_1^2+\ve^2.$ It is convenient to
parametrize these by the horizontal coordinate: let us denote
$\varphi_{(x_1,x_1^2+\ve^2)}$ by $\psi_{x_1}.$ For the rest of
this section, we will simply write $\BMO_\ve$ for $\BMO_\ve((0,1)).$
If we need $\BMO$ over another interval, we will write the interval
explicitly.

If the candidate $G$ corresponds to the Bellman function $\bel{G},$ the existence of $\varphi$ satisfying~\eqref{co1}
would immediately imply that
\eq[co2]{
\begin{array}{ll}
G(x)\le\bel{G}(x),\quad\forall x\in\Oe,&\hbox{if $\bel{G}$ is an upper \Bf},
\rule{0pt}{15pt}
\\
G(x)\ge\bel{G}(x),\quad\forall x\in\Oe,&\hbox{if $\bel{G}$ is a lower \Bf}.
\rule{0pt}{15pt}
\end{array}
}
\smallskip

In general, whether an extremal function exists for a particular
inequality is a deep question, which sometimes is more difficult
to answer than to prove the inequality itself. However, if the \Bf\
for the inequality is known, one has a definitive answer as to the
existence and the nature of optimizers. If one exists, it is found
using the geometry of the Bellman foliation, as done in this section.
If one does not exist (which is the case, for example, for all known
\Bfs\ for the maximal operator), optimizing sequences are built along
the trajectories of the foliation.

Since our global Bellman candidates are built out of four canonical
local blocks, $F^+,\;F^-,\;L,$ and/or $T,$ each with its own geometry,
it is natural to attempt to construct a corresponding set of canonical
optimizers, one for each local candidate. If one has such a set, one
can demonstrate that (the appropriate line of)~\eqref{co2} holds for
the global candidate $G$ simply by showing it for each canonical
sub-domain of $\Oe.$

Thus, we fix a canonical block $D$ and attempt to build an optimizer
$\varphi_x$ for each point of the domain $\Omega_D.$ Expectedly,
$\varphi_x$ will, generally, depend not only on $x,$ but also on
the placement of $\Omega_D$ within $\Oe.$ This is so because our
local candidates, for the most part, themselves explicitly depend on
that placement. Somewhat more subtly, the optimizers turn out to depend,
in some cases, on exactly how the local candidates are glued together
to produce a global one. In Section~\ref{local}, we built blocks of two
kinds. Some were completely determined by the parameters of their
domain, namely $F^+(x;-\infty,u_2),$ $F^-(x;u_1,\infty),$ and all
blocks of the $L$ type; let us call these blocks {\it complete}. Others,
namely $F^+(x;u_1,u_2)$ for $u_1>-\infty,$ $F^-(x;u_1,u_2)$  for
$u_2<\infty,$  and $T$, had undetermined constants that were only
found in Section~\ref{global}, where the neighbors of these blocks
in the global context were examined; let us call such blocks {\it
incomplete}. Furthermore, different incomplete blocks may or may not
require the knowledge of both neighbors. Thus $F^+$ is {\it
left-incomplete}, unless $u_1=-\infty,$ since it requires the
knowledge if its left neighbor (which, from among our limited supply,
can only be an $L$ block); $F^+$ is {\it right-incomplete}, unless
$u_2=\infty,$ as it requires the neighbor on the right (again, an $L$
block); and $T$ is both left- and right-incomplete, as it requires both
neighbors (these can be various combinations of $L$ and $F$ blocks).

This division has exact parallels in this section: to determine an
optimizer for a complete block, we only need to know its domain. For
example, it is meaningful to say that a given function is an optimizer for
$L^+(x;a).$ However, to determine an optimizer for an incomplete
block, we need to know its neighbors, and their optimizers, on
the left and/or on the right, all the way to a complete block. Thus,
we cannot say that a given function $\varphi_x$ is an optimizer for
$F^+(x;u_1,u_2),$ because $F^+$ is incomplete. We could,
however, say that a function is an optimizer for the sequence
$L(x;u_1\!-\!\ve)\to F^+(x;u_1,u_2),$ because such a sequence
determines the candidate completely.

In such situations, we will talk about {\it optimizers for a block}
(alternatively, {\it local candidate\textup) in the context of a
sequence}. Rather than defining this notion in general, let us list
the specific sequences we will encounter, including those consisting
of a single block: $L,$ $F^+(x;-\infty,u_2),$ $F^-(x;u_1,\infty),$
$L\to F^+,$ $F^-\!\!\leftarrow L,$ $L\to T\leftarrow L,$ $F^+\!\!\to
T\leftarrow F^-,$ and $L\to F^+\!\!\to T\leftarrow F^-(x;u_1,\infty).$
In each case, all arrows point to the block for which an optimizer is
being sought. Expectedly, each such sequence starts with a block that
is complete on the left and ends with one complete on the right.

The main empirical consideration in building optimizers for Bellman
foliations is that for each point $x$ the whole construction of the
optimizer $\varphi_x$ should be taking place along the extremal
trajectory passing through $x.$ This means that $\varphi_x$ should
be such that when the interval $I=(0,1)$ is split into two subintervals, 
$I=I_-\cup I_+,$  the Bellman points $x^{I_\pm}=(\av{\varphi_x}{I_\pm}\!,
\av{\varphi_x^2}{I_\pm}\!)$ are also on the trajectory. Indeed,
for the true \Bf, each inequality in the Bellman induction of
Section~\ref{induction} is, in fact, an equality; thus, each split
must be happening along the trajectory where the candidate is linear.
Conversely, if $m,n$ are two points on such a trajectory and we know
the optimizers $\varphi_{m}$ and $\varphi_{n},$ then the optimizer for
any point $x$ on the trajectory that is between $m$ and $n$ can be
obtained simply by concatenating the two known optimizers:
$$
\varphi_x(t)=
\begin{cases}
\varphi_m\left(\frac t\gamma\right),&t\in(0,\gamma),
\\
\varphi_n\left(\frac{t-\gamma}{1-\gamma}\right),&t\in(\gamma,1),
\rule{0pt}{18pt}
\end{cases}
$$
where $\gamma=(x_1-n_1)/(m_1-n_1).$
\begin{remark}
\label{care}
In general, concatenating two functions from $\BMO_\ve$ is not
guaranteed to produce another $\BMO_\ve$ function. One or both of
the functions being concatenated, $\varphi_n$ and $\varphi_m$ in the
formula above,  may have to be rearranged to ensure the smallest
possible $\BMO$ norm of the resulting function.
\end{remark}
\begin{remark}
As we will see, the optimizers built in this section {\it do not
depend} on the choice of the boundary function $f, $ but only on
the geometry of the canonical subdomain and, in some cases, on how
such subdomains are glued together.
\end{remark}
\subsection{Optimizers for $L_0(x)$} The foliation of the domain
$\Omega_L(0)$ that corresponds to the Bellman candidate $L_0$ given
by~\eqref{g1} consists of horizontal lines. For each point
$x\in\Omega_L(0),$ the extremal trajectory through $x$ intersects
the boundary of $\Oe$ in two points, $(-\sqrt{x_2},x_2)$ and
$(\sqrt{x_2},x_2).$ Since the only test functions available on the
boundary of $\Oe$ are constants, we already know the optimizers for
these points: $\varphi_{(\pm\sqrt{x_2},x_2)}(t)=\pm\sqrt{x_2}.$
Therefore, to construct the optimizer for the point $x,$ we
concatenate the two boundary values in the appropriate proportion:
\eq[ol01]{
\varphi_x(t)=
\begin{cases}
-\sqrt{x_2},&\ds t\in(0,\alpha),\\
\ \sqrt{x_2},&\ds t\in(\alpha,1),
\end{cases}
}
where
\eq[ol02]{
\alpha=\frac12\left(1-\frac{x_1}{\sqrt{x_2}}\right).
}
We now verify~\eqref{co1} for $\varphi_x.$
\begin{lemma}
\label{oL0}
The function $\varphi_x$ given by~\eqref{ol01} and~\eqref{ol02} is an
optimizer for $L_0(x).$
\end{lemma}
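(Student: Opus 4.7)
The plan is to verify conditions (a), (b), and (c) of \eqref{co1} directly from the two-valued structure of $\varphi_x.$ The crucial observation is that $\varphi_x$ takes only the values $\pm\sqrt{x_2},$ so $\varphi_x^2 \equiv x_2$ is constant on $(0,1),$ and $f(\varphi_x) \equiv f(\sqrt{x_2})$ since $f$ is assumed even. This reduces everything to elementary arithmetic.

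First I would verify (a). A direct computation with $\alpha = \frac{1}{2}(1-x_1/\sqrt{x_2})$ gives
\begin{equation*}
\av{\varphi_x}{(0,1)} = -\alpha\sqrt{x_2} + (1-\alpha)\sqrt{x_2} = (1-2\alpha)\sqrt{x_2} = x_1,
\end{equation*}
while $\av{\varphi_x^2}{(0,1)} = x_2$ is immediate from $\varphi_x^2 \equiv x_2.$ Note also that $\alpha \in [0,1],$ because $x \in \Omega_L(0)$ forces $x_1^2 \le x_2,$ hence $|x_1|/\sqrt{x_2} \le 1.$ Next, (c) is equally direct: by the evenness of $f,$
\begin{equation*}
\av{f(\varphi_x)}{(0,1)} = \alpha f(-\sqrt{x_2}) + (1-\alpha) f(\sqrt{x_2}) = f(\sqrt{x_2}) = L_0(x),
\end{equation*}
using formula \eqref{g1}.

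The only point that requires a separate (though still easy) argument is (b). For any subinterval $J \subset (0,1),$ the identity $\varphi_x^2 \equiv x_2$ gives $\av{\varphi_x^2}{J} = x_2,$ and the Cauchy--Schwarz (or Jensen) inequality yields $\av{\varphi_x}{J}^2 \le \av{\varphi_x^2}{J} = x_2.$ Therefore
\begin{equation*}
\av{\varphi_x^2}{J} - \av{\varphi_x}{J}^2 \le x_2 \le \ve^2,
\end{equation*}
where the last inequality again uses $x \in \Omega_L(0).$ This shows $\|\varphi_x\|_{\BMO((0,1))} \le \ve,$ completing the verification of \eqref{co1}. There is no real obstacle here; the foliation of $\Omega_L(0)$ by horizontal lines produces the simplest possible optimizer, a two-step function whose only job is to attain the prescribed averages while sitting on the upper boundary parabola $x_2 = x_1^2 + \ve^2$ at its two endpoint values.
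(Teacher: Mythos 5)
Your verification is correct, and parts (a) and (c) coincide with the paper's computations. The only place where you genuinely diverge is part (b): the paper argues geometrically, observing that every Bellman point $x^I$ of $\varphi_x$ is a convex combination of the two points $(\pm\sqrt{x_2},x_2)$ and hence lies in the convex subset $\Omega_L(0)\subset\Oe$; you instead exploit the identity $\varphi_x^2\equiv x_2$ to bound $\av{\varphi_x^2}J-\av{\varphi_x}J^2\le x_2\le\ve^2$ directly. Both routes are valid; yours is slightly more elementary and gives the norm bound in one line, while the paper's convexity argument is the one that generalizes to the other $L$-type optimizers (where $\varphi_x$ takes two \emph{different} constant values $u\ne -v$ and the square is no longer constant), which is presumably why the authors phrase it that way. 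One small slip in your closing remark: the endpoint values $\pm\sqrt{x_2}$ correspond to Bellman points on the \emph{lower} boundary parabola $x_2=x_1^2$ (constants always sit there), not on the upper parabola $x_2=x_1^2+\ve^2$; this does not affect the proof, but the geometric picture you describe is off.
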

\begin{proof}
For part~(a) of~\eqref{co1}, we have $\av{\varphi_x}{(0,1)}\!\!=
-\alpha\sqrt{x_2}+(1-\alpha)\sqrt{x_2}=x_1$ and $\av{\varphi_x^2}{(0,1)}
\!\!=\alpha x_2+(1-\alpha)x_2=x_2.$

To show (b), we need to show that for any subinterval $I$ of $(0,1)$ the
Bellman point $x^I,$ corresponding to $I$ and $\varphi_x,$ is in $\Oe.$
Observe that $x^I$ is a convex combination of two boundary points,
$(\pm\sqrt{x_2},x_2).$ Thus, it lies on the line segment connecting
these points and, furthermore, belongs to the convex subset  $\Omega_L(0)$ of
$\Oe.$

For (c), we trivially have
$\ds\av{f(\varphi_x)}{(0,1)}\!\!=f(\sqrt{x_2})=L_0(x).$
\end{proof}
\subsection{Optimizers for $L^\pm(x;a),$ $a\ne0$}
Recall that for each choice of $a$ the candidates $L^\pm(x;a),$ given by \eqref{g21} and \eqref{g20.0}, are
built on the domain $\Omega_L(a)$ consisting of the (closed) portion
of $\Oe$ lying under a two-sided tangent $x_2=2ax_1+\ve^2-a^2.$ The Bellman foliation for $L^+$ is the collection of
straight lines connecting the corner point $(u_+,u_+^2)$ to points
$(u,u^2)$ with $u_-<u<u_+,$ while the foliation for $L^-$ consists of
lines connecting $(u_-,u_-^2)$ to $(u,u^2).$ For each $x\in\Omega_L(a),$
except the two corners, there is a unique such $u,$ given
by~\eqref{g20.0}:
\eq[ol1]{
u=\frac{x_2-vx_1}{x_1-v},
}
where $v$ stands for either $u_+$ or $u_-,$ as appropriate.

Again, we already know the optimizers for the points $(v,v^2)$ and
$(u,u^2):$ $\varphi_{(v,v^2)}(t)=v$ and $\varphi_{(u,u^2)}(t)=u,$
respectively. Therefore, to construct the optimizer for the point
$x$ lying on the line connecting $v$ and $u,$ we concatenate these
two optimizers:
\eq[ol2]{
\varphi_x(t)=
\begin{cases}
u,&t\in(0,\alpha);\\
v,&t\in(\alpha,1),
\end{cases}
}
where
\eq[ol3]{
\alpha=\frac{x_1-v}{u-v}=\frac{(x_1-v)^2}{x_2-2vx_1+v^2}.
}
We now verify \eqref{co1} for $\varphi_x.$
\begin{lemma}
The function $\varphi_x$ given by~\eqref{ol1}\textup,
\eqref{ol2}\textup, and~\eqref{ol3} is an optimizer for $L^-(x;a),$
if $v=u_-,$ and for $L^+(x;a),$ if $v=u_+.$
\end{lemma}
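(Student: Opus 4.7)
The proof is an immediate generalization of Lemma~\ref{oL0}: we verify the three conditions in~\eqref{co1} in turn. Throughout, write $\alpha=(x_1-v)/(u-v)$ and $1-\alpha=(u-x_1)/(u-v),$ so $\varphi_x$ takes the value $u$ on a set of measure $\alpha$ and the value $v$ on a set of measure $1-\alpha.$

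For~(a), a direct computation gives $\av{\varphi_x}{(0,1)}=\alpha u+(1-\alpha)v=x_1,$ and
\[
\av{\varphi_x^2}{(0,1)}=\alpha u^2+(1-\alpha)v^2=\frac{(x_1-v)u^2+(u-x_1)v^2}{u-v}=x_1(u+v)-uv.
\]
By the definition~\eqref{ol1} of $u,$ we have $u(x_1-v)=x_2-vx_1,$ equivalently $x_2=x_1(u+v)-uv,$ so $\av{\varphi_x^2}{(0,1)}=x_2.$

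For~(b), observe that $\varphi_x$ is a two-valued step function, so for any subinterval $J\subset(0,1),$ the Bellman point $x^J$ is a convex combination of $(u,u^2)$ and $(v,v^2)$ and therefore lies on the chord joining them. This chord is exactly the extremal trajectory~\eqref{e1} that threads through $x,$ and lies entirely in $\Omega_L(a)\subset\Oe:$ the lower inequality $x_2\ge x_1^2$ on the chord reduces to $(x_1-u)(x_1-v)\le0,$ while the upper inequality follows from linearity and the fact that both endpoints $(u,u^2)$ and $(v,v^2)$ lie in $\Omega_L(a)$ (as $u,v\in[a-\ve,a+\ve]$). In particular $x^J\in\Oe,$ so the BMO bound $\|\varphi_x\|_{\BMO}\le\ve$ will follow once we check the variance at the midpoint of the chord (where it is maximized): a point $\beta(u,u^2)+(1-\beta)(v,v^2)$ has variance $\beta(1-\beta)(u-v)^2\le (u-v)^2/4\le\ve^2,$ since $|u-v|\le 2\ve$ for $v=a\pm\ve$ and $u\in[a-\ve,a+\ve].$

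For~(c), we have
\[
\av{f(\varphi_x)}{(0,1)}=\alpha f(u)+(1-\alpha)f(v)=f(v)+\frac{f(u)-f(v)}{u-v}(x_1-v),
\]
which, on setting $v=u_\pm,$ matches~\eqref{g21} exactly. There is no real obstacle here; the whole argument hinges on the single observation that a two-valued step function produces Bellman points on the chord between its two boundary points, and that our choice of the trajectory~\eqref{ol1} together with the bound $|u-v|\le 2\ve$ keeps that chord inside the sub-domain $\Omega_L(a).$ This same template (concatenation of boundary optimizers along an extremal trajectory) will recur in the subsequent cases, with Remark~\ref{care} becoming relevant only when the trajectory contains three or more boundary anchors.
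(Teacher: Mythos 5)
Your proposal is correct and follows essentially the same route as the paper: you verify (a) and (c) by the same direct computations, and your part (b) amounts to the paper's one-line observation that every Bellman point is a convex combination of $(u,u^2)$ and $(v,v^2),$ which lie in the convex set $\Omega_L(a)\subset\Oe.$ The additional midpoint-variance estimate is redundant, since once the Bellman point is known to lie in $\Oe$ the bound $\av{\varphi_x^2}J-\av{\varphi_x}J^2\le\ve^2$ is already established.
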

\begin{proof}
For~(a), we have $\av{\varphi_x}{(0,1)}\!\!=\alpha u+(1-\alpha)v=x_1$
and $\av{\varphi_x^2}{(0,1)}\!\!=\alpha u^2+(1-\alpha)v^2=x_2.$

For~(b), we proceed as before. Let $x^I$ be the Bellman point
corresponding to $\varphi_x$ and a subinterval $I$ of $(0,1).$
This point is a convex combination of the points $(v,v^2)$ and
$(u,u^2),$ which lie in the convex subset $\Omega_L(a)$ of $\Oe.$
Therefore, $x^I$ belongs to $\Oe.$

Finally, we check the optimality relation~(c) of~\eqref{co1}:
$$
\av{f(\varphi)}{(0,1)}=\alpha f(u)+(1-\alpha)f(v)=
\frac{f(v)-f(u)}{v-u}\,(x_1-v)+f(v)=L(x;a),
$$
according to~\eqref{g21}. Here $L$ stands for either $L^+$ or $L^-,$
as appropriate.
\end{proof}
\subsection{Optimizers for $L(x;u_1\!-\ve)\to F^+(x;u_1,u_2)$}
Recall formula \eqref{gg4.0} for $F^+$ in this context:
$$
F^+(x;u_1,u_2)=\frac1\ve e^{-u/\ve}
\left[\frac{f(u_1)-f(u_1-2\ve)}2 e^{u_1/\ve}+
\int_{u_1}^u\!\!f'(s)e^{s/\ve}\,ds\right]\, (x_1-u)+f(u),
$$
with $u$ given by \eqref{ff5}:
$$
u=u_+\!=x_1+\ve-\sqrt{\ve^2-x_2+x_1^2}.
$$

The extremal trajectories for $F^+$ are one-sided tangents to the
upper boundary. According to the discussion in the introduction
to this section, to be able to construct an optimizer for any
$x\in\Omega_{F^+}(u_1,u_2),$ we first construct one for each point
of the upper boundary of this sub-domain, $\{(a,a^2+\ve^2),$
$a\in(a_1,a_2]\},$ $a_i=u_i-\ve.$ 
Then, we concatenate the optimizer on the upper boundary
with the constant optimizer on the bottom boundary via
the extremal tangent through $x.$

Fix an $a\in(a_1,a_2]$ and consider the tangent $x_2=2ax_1+\ve^2-a^2$
intersecting the lower boundary at the point $(u,u^2).$ Since all our
extremal tangents are one-sided, we do not seem to have another
optimizer with which to concatenate the constant function $u.$
However, we circumvent this difficulty with the following approximating
procedure.  Fix a small number $\Delta$ and let $\alpha=
\frac{\ve-\Delta}{\ve+\Delta},$ $\beta=\frac\ve{\ve+\Delta}.$
Now consider the point $(a-\Delta,a^2+\ve^2-2a\Delta),$ also on
the tangent. Assume for a moment that we know the optimizer $\rho$
for this point and set
\eq[co3]{
\psi_a(t)\approx
\begin{cases}
\rho\left(\frac t\beta\right),&t\in(0,\beta),\\
u,&t\in(\beta,1).
\end{cases}
}
To get the optimizer $\rho,$ we draw a tangent through the point
$(a-\Delta,a^2+\ve^2-2a\Delta),$ which intersects the upper boundary
at the point $(a-2\Delta,(a-2\Delta)^2+\ve^2)$ and the lower boundary,
at the point $(u-2\Delta,(u-2\Delta)^2)$ (see Figure~\ref{fig 0.166}). If we knew the optimizer
$\psi_{a-2\Delta},$ we could again concatenate the optimizers on the
two boundaries:
\eq[co4]{
\rho(t)=
\begin{cases}
\psi_{a-2\Delta}\left(\frac\ve{\ve-\Delta}t\right),&t\in(0,1-\Delta/\ve),
\\
u-2\Delta,&t\in(1-\Delta/\ve,1).
\end{cases}
}
Combining~\eqref{co3} and~\eqref{co4}, we obtain a recursive
approximation for $\psi_a:$
$$
\psi_a(t)\approx
\begin{cases}
\psi_{a-2\Delta}\left(\frac t\alpha\right),&t\in(0,\alpha),
\\
u-2\Delta,&t\in(\alpha,\beta),
\\
u,&t\in(\beta,1).
\end{cases}
$$
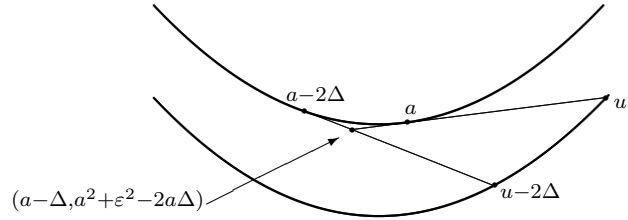
\begin{figure}[ht]
\begin{picture}(150,100)
\thicklines
\qbezier(-10,50)(75,-40)(162,51)
\qbezier(-10,85)(75,-5)(160,85)
\thinlines
\qbezier(65,38)(112,44)(160,50)
\qbezier(47,45)(83,31)(118,17)
\put(10,10){\vector(2,1){50}}
\put(86,41){\circle*{2}}
\put(65,38){\circle*{2}}
\put(47,45){\circle*{2}}
\put(119,17){\circle*{2}}
\put(161,50){\circle*{2}}
\put(84,44){$\scriptstyle a$}
\put(39,49){$\scriptstyle a-2\Delta$}
\put(163,46){$\scriptstyle u$}
\put(120,12){$\scriptstyle u-2\Delta$}
\put(-65,9){$\scriptstyle (a-\Delta,a^2+\ve^2-2a\Delta)$}
\end{picture}
\caption{The construction of $\psi_a$}
\label{fig 0.166}
\end{figure}

Repeating this procedure $k$ times, we get
\eq[co6]{
\psi_a(t)\approx\psi^{(k)}_a(t)\df
\begin{cases}
\psi_{a-2k\Delta}\left(\frac t{\alpha^k}\right),&t\in(0,\alpha^k),
\\
u-2k\Delta,&t\in(\alpha^k,\alpha^{k-1}\beta),
\\
u-2(k-1)\Delta,&t\in(\alpha^{k-1}\beta,\alpha^{k-2}\beta),
\\
\dots&\dots
\\
\dots&\dots
\\
u-2\Delta,&t\in(\alpha\beta,\beta),
\\
u,&t\in(\beta,1).
\end{cases}
}

We seem to have a major problem: we do not know $\psi_{a-2k\Delta}.$
We do, however, know $\psi_{a_1}.$ Setting $\Delta=\frac{a-a_1}{2k}=
\frac{u-u_1}{2k},$ we obtain a known function on the top line
of~\eqref{co6}. Now it is time to let $k\to\infty$ (i.e. $\Delta\to0$).
To determine $\psi_a=\lim_{k\to\infty}\psi_a^{(k)},$ we write down a
simple differential equation: take a $j,$ $1<j<k,$ and let
$t=\alpha^j\beta$ be a generic point in $(0,1).$ Then
$$
\psi_a(t)-\psi_a(\alpha t)\approx
\psi^{(k)}_a(t)-\psi^{(k)}_a(\alpha t)
\approx(u-2j\Delta)-(u-2(j+1)\Delta)=2\Delta.
$$
On the other hand,
$$
\psi_a(t)-\psi_a(\alpha t)\approx\psi'_a(t)t(1-\alpha)
\approx\psi'_a(t)t\,\frac{2\Delta}\ve.
$$
Combining the two approximate equalities and solving the differential
equation, we obtain
$$
\psi_a(t)=D+\ve\log t.
$$
Observe that
$$
\lim_{k\to\infty}\alpha^k=\lim_{\Delta\to0}
\left(1-\frac{2\Delta}{\ve+\Delta}\right)^{(a-a_1)/(2\Delta)}
\!\!\!=e^{(a_1-a)/\ve}.
$$
Altogether,~\eqref{co6} becomes
\eq[co8]{
\psi_a(t)=
\begin{cases}
\psi_{a_1}\left(e^{(a-a_1)/\ve}t\right),
&t\in\left(0,e^{(a_1-a)/\ve}\right),
\\
D+\ve\log t,&t\in\left(e^{(a_1-a)/\ve},1\right).\rule{0pt}{15pt}
\end{cases}
}

How do we determine the constant $D?$ Recall that we {\it a priori}
have $\av{\psi_{a_1}}{(0,1)}\!\!=a_1$ and must ensure
$\av{\psi_a}{(0,1)}\!\!=a.$ This gives
$$
a_1e^{(a_1-a)/\ve}+(D-\ve)\left(1-e^{(a_1-a)/\ve}\right)+
e^{(a_1-a)/\ve}(a-a_1)=a
$$
or
$$
D=a+\ve=u\,.
$$

Having constructed an optimizer for each point on the upper boundary,
we are in a position to construct one for any point
$x\in\Omega_{F^+}(u_1,u_2).$ As planned, we consider the extremal tangent
through $x,$ and concatenate the optimizer $\psi_a$ for the upper boundary
and the constant $u$ for the lower
boundary. Specifically, we have
$$
\varphi_x(t)=
\begin{cases}
\psi_a\left(\frac\ve{u-x_1}t\right),&t\in\left(0,\frac{u-x_1}\ve\right),
\\
u,&t\in\left(\frac{u-x_1}\ve,1\right).
\end{cases}
$$
Using~\eqref{co8}, we obtain the complete expression for the optimizer:
\eq[co9]{
\varphi_x(t)=
\begin{cases}
\psi_{a_1}\big(\frac t{\mu\nu}\big),&t\in\left(0,\mu\nu\right),
\\
u+\ve\log \frac t\mu,&t\in(\mu\nu,\mu),\rule{0pt}{12pt}
\\
u,&t\in\left(\mu,1\right),\rule{0pt}{12pt}
\end{cases}
}
where
\eq[co10]{
\mu=\frac{u-x_1}\ve,\quad\nu=e^{(u_1-u)/\ve},\quad\hbox{and}
\quad u=u_+\!=x_1+\ve-\sqrt{\ve^2-x_2+x_1^2}.
}
Let us now recall the expression for $\psi_{a_1},$ the optimizer for
$L(a_1,a_1^2+\ve^2;a_1).$ It is given by either~\eqref{ol01}
or~\eqref{ol2}, in each case with $\alpha=1/2:$
$$
\psi_{a_1}(t)=
\begin{cases}
a_1-\ve
\\
a_1+\ve
\end{cases}
=
\begin{cases}
u_1-2\ve,&t\in (0,1/2),
\\
u_1,&t\in (1/2,1).
\end{cases}
$$
Therefore, formula~\eqref{co9} can be rewritten as
\eq[co10.5]{
\varphi_x(t)=
\begin{cases}
u_1-2\ve,&t\in\left(0,\mu\nu/2\right),\\
u_1,&t\in\left(\mu\nu/2,\mu\nu\right),\\
u+\ve\log \frac t\mu,&t\in(\mu\nu,\mu),\\
u,&t\in\left(\mu,1\right).
\end{cases}
}

We can now prove
\begin{lemma}
\label{LF+}
The function $\varphi_x$ given by~\eqref{co10} and~\eqref{co10.5} is
an optimizer for\\ $L(x;u_1\!-\!\ve)\longrightarrow F^+(x;u_1,u_2).$
\end{lemma}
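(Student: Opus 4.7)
The plan is to verify the three defining properties (a)--(c) of \eqref{co1} for the explicit four-piece function $\varphi_x$ in \eqref{co10.5}. Properties (a) and (c) reduce to direct computation: I would split $\int_0^1 \varphi_x^k$ (for $k=1,2$) and $\int_0^1 f(\varphi_x)$ into integrals over the four sub-intervals $(0,\mu\nu/2)$, $(\mu\nu/2,\mu\nu)$, $(\mu\nu,\mu)$, $(\mu,1)$. The only non-elementary piece is the logarithmic middle, and the substitution $s = u + \ve\log(t/\mu)$ converts $\int_{\mu\nu}^\mu g(u+\ve\log(t/\mu))\,dt$ into $\frac{\mu}{\ve} e^{-u/\ve}\int_{u_1}^u g(s)e^{s/\ve}\,ds$ for any integrand $g$. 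Combined with the identities $\ve\log\nu = u_1-u$ and $x_1 - u = -\ve\mu$, property (a) falls out after routine cancellation. For property (c), comparing the result against \eqref{gg4.0} via $(f(s)e^{s/\ve})' = [f'(s)+f(s)/\ve]e^{s/\ve}$ causes all terms to match exactly, yielding $\av{f(\varphi_x)}{(0,1)} = F^+(x;u_1,u_2)$.

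The main obstacle is property (b): showing $\varphi_x \in \BMO_\ve((0,1))$, i.e., that the Bellman point of every subinterval $I \subset (0,1)$ lies in $\Oe$. Here the strategy is purely geometric and rests on the fact that $\varphi_x$ was constructed by concatenation along extremal tangents. For $I$ contained in a single constant piece, the Bellman point sits on the lower parabola $x_2=x_1^2$. For $I$ straddling the first two pieces, the equal-measure split $(0,\mu\nu/2)\cup(\mu\nu/2,\mu\nu)$ produces precisely the tangency corner $(a_1, a_1^2+\ve^2)$; more general splits give points on the line segment from $(u_1-2\ve,(u_1-2\ve)^2)$ to $(u_1,u_1^2)$, which lies in the convex set $\Omega_L(a_1)\subset\Oe$. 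For $I=(s_1,s_2)\subset(\mu\nu,\mu)$, a short calculation with $\varphi_x(t)=u+\ve\log(t/\mu)$ shows the Bellman point lies on the tangent to the upper parabola at the unique $a$ for which $(s_1,s_2)$ is a valid sub-sweep, hence inside $\Omega_{F^+}(u_1,u_2)$.

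The remaining cases --- intervals straddling three or more pieces --- reduce to the previous ones by the concatenation principle noted in the paragraph preceding Remark~\ref{care}: such a Bellman point is a convex combination of Bellman points of the ``canonical'' sub-intervals already handled, all collinear along a common extremal tangent of the global foliation, which is a straight line contained in $\Oe$. The bookkeeping of these cases is tedious but mechanical; the essential point is that the construction of $\varphi_x$ \emph{is} the concatenation along the $F^+$ foliation, so at every scale the Bellman trajectory cannot leave $\Oe$. Once (b) is in hand, together with (a) and (c), $\varphi_x$ satisfies \eqref{co1} and is therefore an optimizer for $L(x;u_1-\ve)\longrightarrow F^+(x;u_1,u_2)$.
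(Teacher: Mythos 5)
Your treatment of parts (a) and (c) of \eqref{co1} matches the paper's: the substitution $s=u+\ve\log(t/\mu)$ and an integration by parts do indeed give $\av{f(\varphi_x)}{(0,1)}=F^+(x;u_1,u_2)$, and the first-and-second-moment identities give (a). The gap is in part (b), precisely in the case you dismiss as ``tedious but mechanical'': intervals $(c,d)$ with $c$ inside the first constant piece $(0,\mu\nu/2)$ and $d$ beyond $\mu\nu$. Your reduction claims that the Bellman point of such an interval is a convex combination of Bellman points of the canonical sub-intervals, ``all collinear along a common extremal tangent.'' That collinearity is false: the sub-interval Bellman points here are $(u_1-2\ve,(u_1-2\ve)^2)$, $(u_1,u_1^2)$, and a point coming from the logarithmic piece, and these do not lie on one line. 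More importantly, the fallback ``convex combination of points of $\Oe$'' proves nothing, because $\Oe$ is not convex in exactly the relevant direction: the region $\{x_2\le x_1^2+\ve^2\}$ is non-convex, so a chord joining the far-left corner point $(u_1-2\ve,(u_1-2\ve)^2)$ to a point sufficiently far to the right exits $\Oe$ through the upper parabola. This is the whole difficulty of (b), and it is the reason the paper flags the rearrangement issue in Remark~\ref{care}. (A smaller slip: for a subinterval $(s_1,s_2)$ of the log piece with $s_1>0$, the Bellman point is strictly inside $\Oe$, not on a tangent to the upper parabola --- by \eqref{co10.6} the oscillation equals $\ve^2$ only when $s_1=0$; the conclusion you need is still true, but not for the reason you state.)

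The paper closes this case with two ingredients you would need to supply. First, a reduction via the cut-off Lemma~\ref{norm}: $\varphi_x$ is the cut-off at height $u$ of the function $\eta_x$ that equals the two-step function on $(0,\mu\nu)$ and $q(t)=u+\ve\log(t/\mu)$ on $(\mu\nu,1)$, and $\eta_x$ restricted to $(\mu\nu/2,1)$ is in turn a cut-off of $q$; this disposes of all intervals except $0<c<\mu\nu/2<\mu\nu<d$. Second, a genuinely geometric step exploiting the identity $\av{q}{(0,\mu\nu)}=a_1$, $\av{q^2}{(0,\mu\nu)}=a_1^2+\ve^2$: it forces the Bellman point $z$ of $(0,d)$ for $\eta_x$ to lie exactly on the upper parabola, while $z^-$ (for $(0,c)$) lies on the lower one; since $z$ is a convex combination of $z^-$ and the point $z^+$ for $(c,d)$, the point $z^+$ sits on the line through $z^-$ and $z$ beyond $z$, where that line has re-entered $\Oe$, so $z^+\in\Oe$. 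Without an argument of this kind (or an equivalent quantitative estimate), property (b) is not established, and the proposal as written does not prove the lemma.
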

\begin{proof}
First observe that
$$
\int_{\mu\nu}^\mu\!\!\log\frac t\mu\,dt=-\mu\nu\log\nu-\mu+\mu\nu,\quad
\int_{\mu\nu}^\mu\!\!\log^2\frac t\mu\,dt
=-\mu\nu\log^2\nu+2\mu\nu\log\nu+2(\mu-\mu\nu).
$$
Now, using \eqref{co10} and the fact that $\av{\psi_{a_1}}{(0,1)}\!\!=a_1,
\av{\psi^2_{a_1}}{(0,1)}\!\!=a_1^2+\ve^2,$ we get
\begin{align*}
\av{\varphi_x}{(0,1)}&=\frac{\mu\nu}2(u_1-2\ve)+\frac{\mu\nu}2u_1
+u(1-\mu\nu)+\ve(-\mu\nu\log\nu-\mu+\mu\nu)
\\
&=\mu\nu(u_1-\ve)+u(1-\mu\nu)-\mu\nu(u_1-u)-(u-x_1)+\ve\mu\nu
\\
&=x_1
\end{align*}
and
\begin{align*}
\av{\varphi^2_x}{(0,1)}&=
\frac{\mu\nu}2(u_1-2\ve)^2+\frac{\mu\nu}2u_1^2
+u^2(1-\mu\nu)+2u\ve(-\mu\nu\log\nu-\mu+\mu\nu)
\\
&\quad+\ve^2(-\mu\nu\log^2\nu+2\mu\nu\log\nu+2\mu-2\mu\nu)
\\
&=x_2.
\end{align*}

Proving that $\varphi_x\in\BMO_\ve$ is more delicate. Let
$q(t)=u+\ve\log(t/\mu).$ Observe that $q\in\BMO_\ve((0,\infty)).$
Indeed, for $l(t)\df\log t$ and any interval $(c,d)\subset(0,\infty)$
a direct calculation yields
\eq[co10.6]{
\av{l^2}{(c,d)}-\av{l}{(c,d)}^2=
1-\frac{cd}{(d-c)^2}\log^2\left(\frac dc\right)\le1,
}
and so $l\in\BMO_1((0,\infty)),$ immediately implying the result for
$q.$ Moreover, setting $c=0$ in~\eqref{co10.6} shows that the Bellman
point corresponding to $q$ and any interval of the form $(0,\gamma)$
is always on the upper boundary of $\Oe.$ The last bit of information
we will need about $q,$ one that can be shown by a computation similar
to that at the beginning of the proof, is that
\eq[co10.7]{
\av{q}{(0,\mu\nu)}=a_1,\quad \av{q^2}{(0,\mu\nu)}=a_1^2+\ve^2.
}

Next, since $\varphi_x$ is the cut-off at height $u$ of the function
$$
\eta_x(t)\df
\begin{cases}
u_1-2\ve,&t\in\left(0,\mu\nu/2\right),\\
u_1,&t\in\left(\mu\nu/2,\mu\nu\right),\\
q(t),&t\in(\mu\nu,1),
\end{cases}
$$
it suffices to show that $\eta_x\in\BMO_\ve,$ as Lemma~\ref{norm} will
then imply the result for $\varphi_x.$ Thus, we aim to show that
$\delta_{c,d}\df\av{\eta_x^2}{(c,d)}-\av{\eta_x}{(c,d)}^2\le\ve^2$
for all $(c,d)\subset(0,1).$

We note that the only subintervals $(c,d)$ that need to be considered
in detail are those with $c\in(0,\mu\nu/2)$ and $d\in(\mu\nu,1).$
Indeed, if $0\le c<d\le\mu\nu,$ then $\delta_{c,d}\le\ve^2,$ since
$\psi_{a_1}\in\BMO_\ve,$ as has already been shown. If
$\mu\nu/2\le c<d\le 1,$ then, again, $\delta_{c,d}\le\ve^2,$ since
$\eta_x|_{(\mu\nu/2,1)}$ is the cut-off, at height $u_1,$ of the
$\BMO_\ve((\mu\nu/2,1))$ function $q$ and so Lemma~\ref{norm} again
applies.

Thus we focus on the case $0<c<\mu\nu/2<\mu\nu<d<1.$ Let
$z^-=(\av{\eta_x}{(0,c)},\av{\eta_x^2}{(0,c)}),$
$z=(\av{\eta_x}{(0,d)},\av{\eta_x^2}{(0,d)}),$ and
$z^+=(\av{\eta_x}{(c,d)},\av{\eta_x^2}{(c,d)})$ be the three Bellman
points corresponding to the intervals $(0,c),$ $(0,d),$ and $(c,d),$
respectively. Since $\eta_x|_{(0,c)}$ is constant, $z^-$ is on the
lower boundary of $\Oe.$ To locate $z,$ we note that by~\eqref{co10.7}
we have
$$
(\av{q}{(0,\mu\nu)},\av{q^2}{(0,\mu\nu)})=
(\av{\eta_x}{(0,\mu\nu)},\av{\eta_x^2}{(0,\mu\nu)}),
$$
and, therefore,
$$
(\av{q}{(0,d)},\av{q^2}{(0,d)})=(\av{\eta_x}{(0,d)},\av{\eta_x^2}{(0,d)}),
$$
for all $d\ge\mu\nu.$ As noted above, $z$ must be on the upper
boundary of $\Oe.$

Consider now the line through $z^-$ and $z.$ Since $z$ is, by
construction, to the right of the point $(a_1,a_1^2+\ve^2),$ this
line lies above the top boundary of $\Omega_L(a_1)$ and so first exits
$\Oe$ and then re-enters it at $z.$ Since $z$ is a convex combination
of $z^-$ and $z^+,$ $z^+$ is located on the same line, to the right
of $z,$ and, therefore, inside $\Oe$ (see Figure~\ref{fig 0.16}).

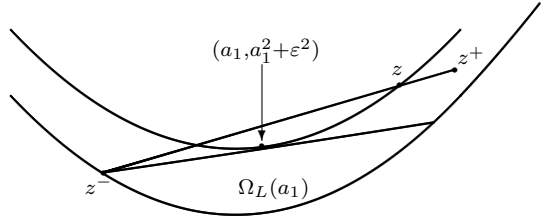
\begin{figure}[ht]
\begin{picture}(150,100)
\thicklines
\qbezier(-10,60)(75,-30)(160,60)
\qbezier(-10,85)(75,-5)(160,85)
\qbezier(25,31)(87,40)(150,50)
\qbezier(160,60)(175,76)(190,95)
\qbezier(25,31)(25,31)(158,70)
\thinlines
\put(85,72){\vector(0,-1){29}}
\put(65,75){$\scriptstyle (a_1,a_1^2+\ve^2)$}
\put(85,41){\circle*{2}}
\put(133,68){$\scriptstyle z$}
\put(17,23){$\scriptstyle z^-$}
\put(158,72){$\scriptstyle z^+$}
\put(75,23){$\scriptstyle \Omega_L(a_1)$}
\put(137,64){\circle*{2}}
\put(25,31){\circle*{2}}
\put(158,70){\circle*{2}}
\end{picture}
\caption{The mutual location of $z^-,$  $z,$ and $z^+$}
\label{fig 0.16}
\end{figure}

For part (c) of \eqref{co1}, we have
\begin{align*}
\av{f(\varphi_x)}{(0,1)}&=\frac{\mu\nu}2f(u_1-2\ve)+\frac{\mu\nu}2f(u_1)
+\int_{\mu\nu}^\mu\!\! f(u+\ve\log(t/\mu))\,dt+f(u)(1-\mu)
\\
&=\frac{\mu\nu}2f(u_1-2\ve)+\frac{\mu\nu}2f(u_1)+
\frac\mu\ve\int_{u_1}^u\!\! e^{(s-u)/\ve}f(s)\,ds+f(u)(1-\mu)
\\
&=\frac12e^{(u_1-u)/\ve}\mu\left[f(u_1-2\ve)-f(u_1)\right]
-\mu\int_{u_1}^u\!\! e^{(s-u)/\ve}f'(s)\,ds+f(u)
\\
&=F^+(x;u_1,u_2).
\end{align*}
Here we have used the formula~\eqref{co10.5} for $\varphi_x$ on the
first step, the change of variables $s=u+\ve\log(t/\mu)$ on the second,
integration by parts on the third, and expressions~\eqref{co10} for
$\mu$ and $\nu$ on the third and fourth. This completes the proof
of the lemma.
\end{proof}
\subsection{Optimizers for $F^+(x;-\infty,u_2)$}
This case follows directly from the previous one. Here the candidate
$F^+$ is given by
$$
F^+(x;-\infty,u_2)=\frac1\ve e^{-u/\ve}
\left[\int_{-\infty}^u\!\!\!f'(s)e^{s/\ve}\,ds\right]\, (x_1-u)+f(u),
$$
with $u$ given, as before, by
$$
u=u_+\!=x_1+\ve-\sqrt{\ve^2-x_2+x_1^2}.
$$
We readily obtain an optimizer for this case by setting $\nu=0$
in~\eqref{co10.5}:
\eq[co10.51]{
\varphi_x(t)=
\begin{cases}
u+\ve\log \frac t\mu,&t\in(0,\mu),\\
u,&t\in\left(\mu,1\right).
\end{cases}
}

Lemma~\ref{LF+} can be restated for this case:
\begin{lemma}
\label{F+}
The function $\varphi_x$ given by \eqref{co10.51} is an optimizer for
$F^+(x;-\infty,u_2).$
\end{lemma}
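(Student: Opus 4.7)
The plan is to verify the three properties in \eqref{co1} directly for $\varphi_x$ from \eqref{co10.51}. This case is exactly the $\nu\to 0$ (equivalently $u_1\to -\infty$) specialization of Lemma~\ref{LF+}, so the computations mirror those of the preceding proof, but are substantially simpler because the $L$-block piece of the optimizer disappears entirely.

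For part~(a), I compute both averages directly. The substitution $s = t/\mu$ reduces the relevant integrals to $\int_0^\mu \log(t/\mu)\,dt = -\mu$ and $\int_0^\mu \log^2(t/\mu)\,dt = 2\mu,$ giving
$$
\av{\varphi_x}{(0,1)} = u - \ve\mu,\qquad \av{\varphi_x^2}{(0,1)} = u^2 - 2u\ve\mu + 2\ve^2\mu.
$$
Plugging in $\mu = (u-x_1)/\ve$ immediately yields $\av{\varphi_x}{(0,1)} = x_1,$ and the identity $(u-x_1)^2 = 2\ve^2 - x_2 + x_1^2 - 2\ve\sqrt{\ve^2-x_2+x_1^2}$ (which follows from the definition of $u=u_+$) reduces the second moment to $x_2.$ For part~(c), the change of variables $s = u + \ve\log(t/\mu)$ converts $\int_0^\mu f(u+\ve\log(t/\mu))\,dt$ into $(\mu/\ve)e^{-u/\ve}\int_{-\infty}^u f(s)e^{s/\ve}\,ds.$ One integration by parts, legitimate for $f(s)=|s|^p$ because $f(s)e^{s/\ve}\to 0$ as $s\to -\infty,$ produces exactly the expression defining $F^+(x;-\infty,u_2)$ in the statement at the head of the subsection.

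For part~(b), the key observation is that $\varphi_x$ is the upper cutoff at height $u$ of the function $q(t) = u + \ve\log(t/\mu),$ which equals $\ve\log t$ plus a constant. By \eqref{co10.6}, $\log t \in \BMO_1((0,\infty)),$ so $q \in \BMO_\ve((0,\infty))$ and a fortiori $q\in\BMO_\ve((0,1)).$ Lemma~\ref{norm} then yields $\|\varphi_x\|_\BMO \le \|q\|_\BMO \le \ve.$ I expect essentially no obstacle here: unlike in Lemma~\ref{LF+}, there is no $L$-block to concatenate with, so the delicate geometric argument tracking the three Bellman points $z^-,z,z^+$ is not needed, and the entire verification collapses to two short integral computations plus a single invocation of the cutoff lemma.
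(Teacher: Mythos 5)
Your proposal is correct and is essentially the paper's own argument: the paper proves this lemma simply by setting $\nu=0$ in Lemma~\ref{LF+}, and your direct verification of (a), (b), (c) is exactly that specialization --- the moment computations and the integration by parts reproduce the LF$^+$ calculation with the $L$-block terms absent, and part (b) collapses, as you note, to the observation that $\varphi_x$ is the upper cutoff at height $u$ of $u+\ve\log(t/\mu)\in\BMO_\ve$ (via \eqref{co10.6} and Lemma~\ref{norm}), which is precisely what remains of the paper's geometric argument when the $L$-piece disappears.
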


\subsection{Optimizers for sequences containing $F^-$} To construct
optimizers for $F^-$ we use the symmetry $F^-(x_1,x_2;u_1,u_2)=
F^+(-x_1,x_2;-u_2,-u_1)$. Therefore, in the optimizers given
by~\eqref{co10.5} and~\eqref{co10.51} we have to replace $\varphi$
by $-\varphi$ and $u_1$ by $-u_2$. This yields
\eq[co12.5]{
\varphi_x(t)=
\begin{cases}
u_2+2\ve,&t\in\left(0,\mu\nu/2\right),\\
u_2,&t\in\left(\mu\nu/2,\mu\nu\right),\\
u-\ve\log \frac t\mu,&t\in(\mu\nu,\mu),\\
u,&t\in\left(\mu,1\right),
\end{cases}
}
for $F^-(x;u_1,u_2)\leftarrow L(x;u_2+\ve)$ and
\eq[co14]{
\varphi_x(t)=
\begin{cases}
u-\ve\log \frac t\mu,&t\in(0,\mu),\\
u,&t\in\left(\mu,1\right),
\end{cases}
}
for $F^-(x;u_1,\infty)$. Here
\eq[co12]{
\mu=\frac{x_1-u}\ve,\quad\nu=e^{(u-u_2)/\ve},\quad\hbox{and}\quad
u=u_-\!=x_1-\ve+\sqrt{\ve^2-x_2+x_1^2}\,.
}
Lemma~\ref{LF+} and Lemma~\ref{F+} can be reformulated, respectively, as
\begin{lemma}
\label{LF-}
The function $\varphi_x$ given by~\eqref{co12.5} and~\eqref{co12} is
an optimizer for\\ $F^-(x;u_1,u_2)\longleftarrow L(x;u_2+\ve).$
\end{lemma}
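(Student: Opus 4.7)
The plan is to deduce this lemma directly from Lemma~\ref{LF+} by exploiting the reflection symmetry $x_1 \mapsto -x_1$ of the whole Bellman setup. Since $f$ is assumed even, the definitions \eqref{b23.1}, \eqref{b24.1} are invariant under $\varphi \mapsto -\varphi$, and this symmetry is exactly what produced the $F^-$ block from the $F^+$ block in Section~\ref{local}. Concretely, writing $\tilde{x}=(-x_1,x_2)$, $\tilde{u}_i = -u_{3-i}$, one has the identity $F^-(x;u_1,u_2) = F^+(\tilde x;\tilde u_1,\tilde u_2)$, and the sub-domain $\Omega_{F^-}(u_1,u_2)$ is the reflection of $\Omega_{F^+}(-u_2,-u_1)$ across $x_1=0$. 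Likewise the neighboring $L$-block on the right of $F^-$ corresponds under reflection to the $L$-block on the left of $F^+$.

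First I would set $\tilde\varphi_x(t) \df -\varphi_{\tilde x}(t)$, where $\varphi_{\tilde x}$ is the optimizer provided by Lemma~\ref{LF+} for $L(\tilde x;\tilde u_1-\ve) \to F^+(\tilde x;\tilde u_1,\tilde u_2)$. A direct unwrapping of formula~\eqref{co10.5} with $u_1$ replaced by $-u_2$ and $u$ replaced by $-u_-$ (noting that $\tilde u_+ = -u_-$ under the symmetry, and likewise $\tilde\mu, \tilde\nu$ coincide with the $\mu,\nu$ of~\eqref{co12}) produces exactly the piecewise formula~\eqref{co12.5}. So the function $\varphi_x$ of the lemma is literally this symmetrized optimizer.

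Now I would verify \eqref{co1}(a)--(c) for $\varphi_x$ by transport. For (a): since $\av{\varphi_{\tilde x}}{(0,1)} = \tilde x_1 = -x_1$ and $\av{\varphi_{\tilde x}^2}{(0,1)} = \tilde x_2 = x_2$, taking $\tilde\varphi_x = -\varphi_{\tilde x}$ gives the desired averages. For (b): the reflection $\varphi \mapsto -\varphi$ preserves the BMO norm and maps $\Omega_\ve$ to itself (it acts on Bellman points by $(y_1,y_2)\mapsto(-y_1,y_2)$, and $\Omega_\ve$ is symmetric under this); so $\tilde\varphi_x \in \BMO_\ve((0,1))$ because $\varphi_{\tilde x}$ is, by Lemma~\ref{LF+}. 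For (c): evenness of $f$ gives $\av{f(\tilde\varphi_x)}{(0,1)} = \av{f(\varphi_{\tilde x})}{(0,1)} = F^+(\tilde x;\tilde u_1,\tilde u_2) = F^-(x;u_1,u_2)$.

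The main obstacle, such as it is, is purely bookkeeping: one has to check that the substitutions $x_1 \to -x_1$, $u_1 \leftrightarrow -u_2$ turn the branch $u = u_+ = x_1+\ve-\sqrt{\ve^2-x_2+x_1^2}$ into the branch $u = u_- = x_1-\ve+\sqrt{\ve^2-x_2+x_1^2}$, and that the scaled time parameters $\mu,\nu$ in \eqref{co12} are indeed the images of those in \eqref{co10} under the symmetry. Once this identification is recorded, no new calculation is needed; the three verifications above transfer verbatim from the proof of Lemma~\ref{LF+}. The analogous reformulation of Lemma~\ref{F+} for $F^-(x;u_1,\infty)$ follows by the same argument applied to \eqref{co10.51}, yielding \eqref{co14}.
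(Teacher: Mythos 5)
Your argument is exactly the paper's: the paper derives Lemma~\ref{LF-} from Lemma~\ref{LF+} via the symmetry $F^-(x_1,x_2;u_1,u_2)=F^+(-x_1,x_2;-u_2,-u_1)$, replacing $\varphi$ by $-\varphi$ and $u_1$ by $-u_2$ in~\eqref{co10.5}, which is precisely your reflection-and-transport argument (you merely spell out the transfer of \eqref{co1}(a)--(c) that the paper leaves implicit). The proposal is correct and essentially identical in approach.
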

\begin{lemma}
\label{F-}
The function $\varphi_x$ given by~\eqref{co14} and~\eqref{co12}
is an optimizer for $F^-(x;u_1,\infty).$
\end{lemma}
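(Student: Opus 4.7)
The plan is to obtain this lemma as an immediate consequence of Lemma~\ref{F+} via the reflection symmetry $F^-(x_1,x_2;u_1,u_2)=F^+(-x_1,x_2;-u_2,-u_1)$ that was invoked at the start of this subsection to write down \eqref{co12.5} and \eqref{co14}. The main observation is that the evenness of $f$, together with the fact that the $\BMO$ quasi-norm and the constraints $\av{\varphi}{(0,1)}=x_1,\av{\varphi^2}{(0,1)}=x_2$ transform cleanly under $\varphi\mapsto-\varphi$, reduces the verification of properties (a)--(c) in \eqref{co1} to the corresponding verification already carried out for $F^+$.

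Concretely, I would first set $\tilde x=(-x_1,x_2)$ and observe that under this reflection $\Omega_{F^-}(u_1,\infty)$ maps to $\Omega_{F^+}(-\infty,-u_1)$. With $u=u_-=x_1-\ve+\sqrt{\ve^2-x_2+x_1^2}$ as in \eqref{co12} and $\tilde u:=-u$, a direct check shows $\tilde u=\tilde x_1+\ve-\sqrt{\ve^2-\tilde x_2+\tilde x_1^2}$, which is precisely the $u_+$ associated to $\tilde x$ via \eqref{ff5}. Likewise the $\mu$ of \eqref{co12} coincides with the $\mu$ from \eqref{co10} applied to $\tilde x$. The optimizer supplied by Lemma~\ref{F+} for $F^+(\tilde x;-\infty,-u_1)$ is, by \eqref{co10.51}, $\tilde\varphi_{\tilde x}(t)=\tilde u+\ve\log(t/\mu)$ on $(0,\mu)$ and $\tilde\varphi_{\tilde x}(t)=\tilde u$ on $(\mu,1)$. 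Negating yields $-\tilde\varphi_{\tilde x}(t)=u-\ve\log(t/\mu)$ on $(0,\mu)$ and $u$ on $(\mu,1)$, exactly matching the $\varphi_x$ from \eqref{co14}.

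Having identified $\varphi_x=-\tilde\varphi_{\tilde x}$, I would then transfer the three optimizer properties. Part (a): by linearity, $\av{\varphi_x}{(0,1)}=-\av{\tilde\varphi_{\tilde x}}{(0,1)}=-\tilde x_1=x_1$ and $\av{\varphi_x^2}{(0,1)}=\av{\tilde\varphi_{\tilde x}^2}{(0,1)}=\tilde x_2=x_2$. Part (b): since $\|-\psi\|_{\BMO_\ve}=\|\psi\|_{\BMO_\ve}$, membership of $\varphi_x$ in $\BMO_\ve((0,1))$ follows from the corresponding statement for $\tilde\varphi_{\tilde x}$ established in Lemma~\ref{F+}. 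Part (c): using that $f$ is assumed even,
$$
\av{f(\varphi_x)}{(0,1)}=\av{f(\tilde\varphi_{\tilde x})}{(0,1)}=F^+(\tilde x;-\infty,-u_1)=F^-(x;u_1,\infty),
$$
the last equality being the symmetry relation recalled above.

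The only genuine obstacle is bookkeeping: one must be careful that the limiting convention $u_2=\infty$ on the $F^-$ side corresponds to $u_1=-\infty$ on the $F^+$ side (so that $\nu=0$ in both \eqref{co10.5} and \eqref{co12.5}, leaving the one-sided expressions \eqref{co10.51} and \eqref{co14}), and that the sign conventions for $u_\pm$ in \eqref{ff5} versus \eqref{co12} line up under $x_1\mapsto-x_1$. Once these are checked, the proof reduces to a single application of Lemma~\ref{F+} and does not require any new computation.
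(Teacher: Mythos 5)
Your proposal is correct and is essentially the paper's own argument: the paper derives the $F^-$ optimizers precisely by the symmetry $F^-(x_1,x_2;u_1,u_2)=F^+(-x_1,x_2;-u_2,-u_1)$, replacing $\varphi$ by $-\varphi$ and $u_1$ by $-u_2$ in the optimizer for $F^+(x;-\infty,u_2)$, so Lemma~\ref{F-} is a direct reformulation of Lemma~\ref{F+}. Your version merely writes out the bookkeeping (matching of $u$, $\mu$, evenness of $f$, invariance of the $\BMO$ norm under negation) that the paper leaves implicit.
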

\subsection{Optimizers for sequences containing $T(x;u)$}
\label{ot}
Constructing optimizers for a $T$-type candidate is straightforward, since $T(x;u)$ 
is a linear function in $\Omega_T(u).$ Here is our strategy: for each 
point $x\in\Omega_T(u),$ draw any straight line that intersects both 
straight-line sides of $\Omega_T(u)$, but not the upper boundary of 
$\Oe,$ and then concatenate the optimizers for the two points of 
intersection in the appropriate proportion. Since $T$ is linear, 
the resulting function will be an optimizer for it. This requires 
knowing optimizers along both bounding tangents, which is consistent 
with the fact that $T$ is both left- and right-incomplete. Thus, we 
will eventually need to examine the specific sequences in which $T$ shows up in 
our global Bellman candidates, in order to write down an explicit 
optimizer for each case. However, most of the construction, as well 
as the verification of parts~(a) and~(c) of~\eqref{co1}, can be 
carried out without specifying the left and right neighbors of $T.$

Suppose $T(x;u)$ is glued to a Bellman candidate $G^-(x)$ along its 
left bounding tangent and a candidate $G^+$ along the right one. Let 
us assume that we know optimizers for $G^\pm$ along their respective 
bounding tangents.

For any $x\in\Omega_T(u)$ there is always a way to draw a line through 
$x$ intersecting the left and right tangents at the points $x^-$ and 
$x^+,$ respectively, and such that the whole segment $[x^-,x^+]$ is in 
$\Omega_T(u).$ (For example, at least one of the tangents to the upper 
boundary of $\Oe$ that pass through $x$ will satisfy this requirement. 
In fact, for each point of $\Omega_T(u)$ other than the three corners
both tangents will satisfy it.) Then, we can concatenate the two known 
optimizers, $\varphi_{x^-}$ and $\varphi_{x^+},$ to obtain a test 
function $\varphi_x$ for the point $x:$
\eq[co36]{
\varphi_x(t)=
\begin{cases}
\varphi_{x^-}\Bigl(\frac t\beta\Bigr),&t\in(0,\beta),
\\
\varphi_{x^+}\Bigl(\frac{t-\beta}{1-\beta}\Bigr),&t\in(\beta,1),
\rule{0pt}{18pt}
\end{cases}
}
where
\eq[co37]{
\beta=\frac{x_1^+-x_1}{x_1^+-x_1^-}=\frac{x_2^+-x_2}{x_2^+-x_2^-}.
}
We can verify parts~(a) and~(c) of~\eqref{co1} directly from~\eqref{co36}
and~\eqref{co37}. For the averages, we have
$$
\av{\varphi_x}{(0,1)}=x_1^-\beta+x_1^+(1-\beta)=x_1,\quad 
\av{\varphi^2_x}{(0,1)}=x_2^-\beta+x_2^+(1-\beta)=x_2.
$$
To check the optimality of $\varphi_x,$ we write 
$T=\alpha_1x_1+\alpha_2x_2+\alpha_0$ and calculate
\begin{align*}
\av{f(\varphi)}{(0,1)}&=G^-(x^-)\beta+G^+(x^+)(1-\beta)
\\
&=T(x^-;u)\beta+T(x^+;u)(1-\beta)
\\
&=(\alpha_1x_1^-+\alpha_2x_2^-+\alpha_0)\beta
+(\alpha_1x_1^++\alpha_2x_2^++\alpha_0)(1-\beta)
\\
&=\alpha_1x_1+\alpha_2x_2+\alpha_0=T(x;u).
\end{align*}
Here we have used: on the first step, the optimality of $\varphi_{x^-}$ and 
$\varphi_{x^+}$ for $G^-$ and $G^+,$ respectively; on the second and 
third  steps, the boundary conditions~\eqref{t2.015} for $T$ along the 
two bounding tangents; and on the last step, the fact that $x=\beta x^-+(1-\beta)x^+.$

Before specifying $\varphi_x$ for each sequence involving $T(x;u),$ let 
us rewrite~\eqref{co36} in the form that is independent of $x^-$ and $x^+.$ Since 
each bounding tangent is assumed to be an extremal trajectory for the 
corresponding Bellman candidate (the left tangent for $G^-,$ the right one 
for $G^+$), the candidate is linear along the tangent. Thus we can write 
$\varphi_{x^-},$ an optimizer for $G^-(x^-),$ as a concatenation of 
optimizers for $G^-(u-\ve,(u-\ve)^2+\ve^2)$ and $G^-(u,u^2)$ and 
similarly for $\varphi_{x^+}:$
\eq[co38]{
\varphi_{x^-}(t)=
\begin{cases}
\psi_{u-\ve}\Bigl(\frac t{\alpha_-}\Bigr),&t\in(0,\alpha_-),\\
u,&t\in(\alpha_-,1),
\end{cases}
\qquad
\varphi_{x^+}(t)=
\begin{cases}
u,&t\in(0,\alpha_+),\\
\psi_{u+\ve}\Bigl(\frac{t-\alpha_+}{1-\alpha_+}\Bigr),&t\in(\alpha_+,1).
\end{cases}
}
where
$$
\alpha_-=\frac{u-x_1^-}\ve,\qquad \alpha_+=1-\frac{x_1^+-u}\ve.
$$
Observe that in the expression for $\varphi_{x^-}$ we put the constant 
value $u$ on the right part of $(0,1),$ while in the expression for 
$\varphi_{x^+}$ this constant value is on the left. This is done so 
as to minimize the $\BMO$ norm of the the resulting function 
$\varphi_x$ given by~\eqref{co36} (see Remark~\ref{care}). 
Using~\eqref{co36} in conjunction with~\eqref{co38}, we get
\eq[co41]{
\varphi_x(t)=
\begin{cases}
\psi_{u-\ve}\Bigl(\frac t{\beta\alpha_-}\Bigr),&t\in(0,\beta\alpha_-),
\\
u,&t\in(\beta\alpha_-,\beta+(1-\beta)\alpha_+),
\\
\psi_{u+\ve}\Bigl(\frac{t-\beta-(1-\beta)\alpha_+}{(1-\beta)(1-\alpha_+)}
\Bigr),&t\in(\beta+(1-\beta)\alpha_+,1).
\end{cases}
}
Since $x$ is a unique --- and independent of $x^-$ and $x^+$ --- 
convex combination of the points $(u-\ve,u^2-2\ve u+2\ve^2),$ 
$(u,u^2),$ and $(u+\ve,u^2+2\ve u+2\ve^2),$ we expect the weights 
$\beta\alpha_-$ and $(1-\beta)(1-\alpha_+)$ in~\eqref{co41} not to 
depend on how the points $x^\pm$ were chosen. Indeed, after a bit 
of algebra, we can rewrite~\eqref{co41} as
\eq[co42]{
\varphi_x(t)=
\begin{cases}
\psi_{u-\ve}\Bigl(\frac t{\mu_-}\Bigr),&t\in(0,\mu_-),
\\
u,&t\in(\mu_-,1-\mu_+),
\\
\psi_{u+\ve}\Bigl(\frac{t-1+\mu^+}{\mu_+}\Bigr),&t\in(1-\mu_+,1),
\end{cases}
}
where
\eq[co43]{
\mu_-=\frac{x_2-2ux_1+u^2}{4\ve^2}-\frac{x_1-u}{2\ve},\qquad 
\mu_+=\frac{x_2-2ux_1+u^2}{4\ve^2}+\frac{x_1-u}{2\ve}.
}

We now need to check that $\varphi_x$ constructed according 
to~\eqref{co42} will be in $\BMO_\ve$ for each global context of $T.$ 
Among our four canonical blocks, a $T$ block can only have $F^+$ or 
$L$ blocks glued to its left and only $F^-$ or, again, $L$ blocks to 
its right. Since we have already constructed optimizers for all $F^\pm$ 
and $L$ blocks, we could proceed in generality and prove that an 
optimizer of the form~\eqref{co42} could always be rearranged --- 
separately on $(0,\mu_-)$ and $(1-\mu_+,1)$ --- so that the resulting 
function is in $\BMO_\ve.$ However, we choose here to be more explicit 
and consider specific optimizers for the specific sequences in which 
$T$ appears. We have three such sequences an so split further 
presentation in three parts.
\subsubsection{Optimizers for $F^+(x;-\infty,0)\!\!\longrightarrow\! T_0(x)
\!\longleftarrow\!F^-(x;0,\infty)$}

This sequence appears in~\eqref{m2.9}. For this case~\eqref{co42} gives
\eq[co42.1]{
\varphi_x(t)=
\begin{cases}
\ve\log\Bigl(\frac t{\mu_-}\Bigr),&t\in(0,\mu_-),
\\
0,&t\in(\mu_-,1-\mu_+),
\\
-\ve\log\Bigl(\frac{t-1+\mu^+}{\mu_+}\Bigr),&t\in(1-\mu_+,1).
\end{cases}
}
To show that $\varphi_x\in\BMO_\ve,$ take an interval 
$(c,d)\subset(0,1).$ First, observe that if $c\in[\mu_-,1-\mu_+],$ the 
Bellman point $x^{(c,d)}$ is the same as the one for the cut-off at 
height $0$ of the function $\ve\log(t/\mu_-),$ which has been shown to 
be in $\BMO_\ve;$ therefore, $x^{(c,d)}$ is in $\Oe.$ The same reasoning 
applies when $d\in[\mu_-,1-\mu_+].$ Therefore, we will assume that 
$c\in(0,\mu_-)$ and $d\in(1-\mu_+,1).$

Recall representation~\eqref{co37}, which gives $x$ as a convex combination 
of $x^-$ and $x^+.$ In our standard notation, $x^-=x^{(0,\beta)}$ and 
$x^+=x^{(\beta,1)}.$ We first would like to determine the location of 
the point $x^{(c,\beta)}.$ We know that $x^{(0,c)}$ is the Bellman point 
for the interval $(0,c)$ and the logarithm $\ve\log(t/\mu_-)$ and we have 
already seen that every such point is on the parabola $x_2=x_1^2+\ve^2.$ 
Therefore, $x^{(0,c)}$ is above the line $m$ through $x^-$ and $x^+$ 
(see Figure~\ref{fig166}).
\begin{figure}[ht]
  \centering{\includegraphics[width=12cm]{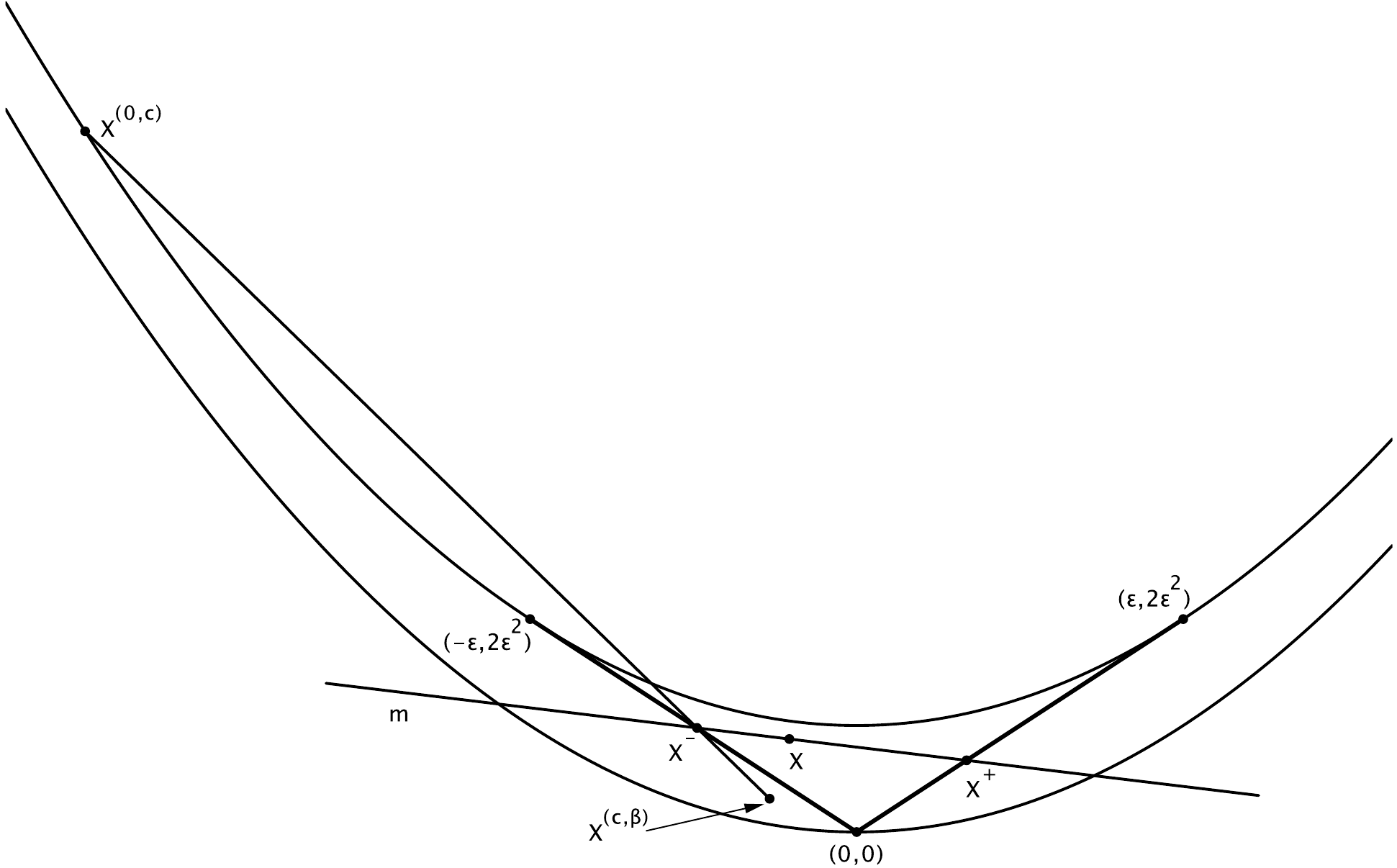}}
\caption{The location of the point $x^{(c,\beta)}$}
\label{fig166}
\end{figure}
Since $x^-$ is a convex combination of $x^{(0,c)}$ and $x^{(c,\beta)},$ 
we conclude that $x^{(c,\beta)}$ is below $m.$ Similarly, the point 
$x^{(\beta,d)}$ (not shown in the figure) and, therefore, the whole 
line segment $[ x^{(c,\beta)},x^{(\beta,d)}]$ is below $m$ and so in 
$\Oe.$ The point $x^{(c,d)}$ is a convex combination of $x^{(c,\beta)}$ 
and $x^{(\beta,d)},$ which means that it is on the line segment and, 
thus, in $\Oe.$
\subsubsection{Optimizers for $L_0(x)\longrightarrow F^+(x;\ve,\xi)
\longrightarrow T(x;\xi)\longleftarrow F^-(x;\xi,\infty)$}
This sequence appears in~\eqref{h11}. Note that we have to include in 
the sequence the block $L_0$ to the left of $F^+,$ since $F^+$ is 
left-incomplete. For this case~\eqref{co42} yields
\eq[co42.2]{
\varphi_x(t)=
\begin{cases}
-\ve,&t\in(0,\mu_-\nu/2),\\
\ve,&t\in(\mu_-\nu/2,\mu_-\nu),\rule{0pt}{12pt}\\
\xi+\ve\log\Bigl(\frac t{\mu_-}\Bigr),&t\in(\mu_-\nu,\mu_-),\\
\xi,&t\in(\mu_-,1-\mu_+),\\
\xi-\ve\log\Bigl(\frac{t-1+\mu^+}{\mu_+}\Bigr),&t\in(1-\mu_+,1),
\end{cases}
}
where $\nu=e^{1-\xi/\ve}$ and $\mu_\pm$ are given by~\eqref{co43}.

Instead of $\varphi_x,$ it is convenient to consider $\tilde{\varphi}_x
=\varphi_x-\xi:$
$$
\tilde{\varphi}_x(t)=
\begin{cases}
-\xi-\ve,&t\in(0,\mu_-\nu/2),\\
-\xi+\ve,&t\in(\mu_-\nu/2,\mu_-\nu),\rule{0pt}{12pt}\\
\ve\log\Bigl(\frac t{\mu_-}\Bigr),&t\in(\mu_-\nu,\mu_-),\\
0,&t\in(\mu_-,1-\mu_+),\\
-\ve\log\Bigl(\frac{t-1+\mu^+}{\mu_+}\Bigr),&t\in(1-\mu_+,1).
\end{cases}
$$
Clearly, $\varphi_x\in\BMO_\ve\Longleftrightarrow 
\tilde{\varphi}_x\in\BMO_\ve.$ Let us take a subinterval $(c,d)$ of 
$(0,1).$ Then $\tilde{\varphi}_x|_{(\mu_-\nu,1)}$ is just (the 
appropriate restriction of) the optimizer from the previous subsection, 
given by~\eqref{co42.1}, and so in $\BMO_\ve.$ Furthermore, 
$\tilde{\varphi}_x|_{(\mu_-\nu/2,1)}$ is the cut-off of that optimizer 
at height $\ve-\xi,$ restricted to $(\mu_-\nu/2,1),$ and. hence, it is also
$\BMO_\ve.$ Therefore, in proving that the Bellman point $x^{(c,d)}$ for 
$\tilde{\varphi}_x$ is in $\Oe$ we only need to consider 
$0<c<\mu_-\nu/2.$ Where should we place $d?$ If $d\le1-\mu_+,$ 
$x^{(c,d)}$ can be seen to be a Bellman point for the optimizer for 
the sequence $L\to F^+,$ given by \eqref{co10.5} with $u_1=-\xi+\ve$ 
and $u=0;$ therefore, it is in $\Oe.$ Thus, from now on we will assume 
that $c<\mu_-\nu/2<1-\mu_+<d.$

As before, we appeal to representation~\eqref{co36}--\eqref{co37}. The 
only difference now is that the point $x^{(0,c)}=(-\xi-\ve,(\xi+\ve)^2)$ 
will not be on the upper parabola $x_2=x_1^2+\ve^2,$ but instead on the 
lower parabola $x_2=x_1^2.$ Since $\xi>\ve,$ this point is above the 
line $x_2=-2\ve x_1.$ Therefore, it is above the line through $x^-$ and 
$x^+$ (let us again call it $m$). Since $x^-=x^{(0,\beta)}$ is a convex 
combination of $x^{(0,c)}$ and $x^{(c,\beta)},$ we conclude that 
$x^{(c,\beta)}$ is below $m.$ From here the consideration is identical 
to the one in the previous subsection and we conclude that 
$x^{(c,d)}\in\Oe.$
\subsubsection{Optimizers for $L^+(x;-\ve)\longrightarrow T_0(x)
\longleftarrow L^-(x;\ve)$}
This sequence appears in~\eqref{h27}. From~\eqref{co42} we obtain
\eq[co42.3]{
\varphi_x(t)=
\begin{cases}
-2\ve,&t\in(0,\mu_-/2),\\
0,&t\in(\mu_-/2,1-\mu_+/2),\\
2\ve,&t\in(1-\mu_+/2,1).
\end{cases}
}
To prove that $\varphi_x\in\BMO_\ve,$ take an interval 
$(c,d)\subset(0,1).$ If $c\ge\mu_-/2$ and/or $d\le 1-\mu_+/2,$ the 
Bellman point $x^{(c,d)}$ is a convex combination of one of 
$(\pm2\ve,4\ve^2)$ and $(0,0).$ The point $x^{(c,d)}$ then is on one 
of the tangent lines $x_2=\pm2\ve x_1$ and, thus, in $\Oe.$ Let us assume 
that $c<\mu_-/2$ and $1-\mu_+/2<d.$ More fully, let us write 
$c<\mu_-/2<\beta<1-\mu_+/2<d,$ where $\beta$ is given by~\eqref{co37}. 
We have $x^-=x^{(0,\beta)}$ as a convex combination of $x^{(0,c)}=
(-2\ve,4\ve^2)$ and $x^{(c,\beta)}.$ Therefore, $x^{(c,\beta)}$ is on 
the tangent $x_2=-2\ve x_1,$ below $x^-.$ Similarly, $x^{(\beta,d)}$ 
is below $x^+.$ We conclude that $x^{(c,d)}\in\Oe.$

We have completed the proofs of the following sequence of lemmas.
\begin{lemma}
\label{ot1.1}
The function~\eqref{co42.1}\textup, with $\mu_\pm$ given by~\eqref{co43} 
for $u=0,$ is an optimizer for $F^+(x;-\infty,0)\longrightarrow T_0(x)
\longleftarrow F^-(x;0,\infty).$
\end{lemma}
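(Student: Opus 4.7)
The plan is to verify the three conditions of \eqref{co1} for $\varphi_x$ given by \eqref{co42.1}. Properties (a) and (c) are essentially immediate from the generic calculation in Section~\ref{ot} preceding the statement: specializing \eqref{co42} to $u=0$, the appropriate one-sided top-boundary optimizers at $(-\ve,2\ve^2)$ and $(\ve,2\ve^2)$ are read off from \eqref{co10.51} and \eqref{co14} (with $u=0$, $\mu=1$), yielding $\psi_{-\ve}(t)=\ve\log t$ and $\psi_{\ve}(t)=-\ve\log t$. Substituting into \eqref{co42} reproduces \eqref{co42.1}, and the identities $\av{\varphi_x}{(0,1)}=x_1$, $\av{\varphi_x^2}{(0,1)}=x_2$, and $\av{f(\varphi_x)}{(0,1)}=T_0(x)$ are exactly the ones carried out in the general discussion of $T$-blocks.

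The substance of the argument is property (b): for every subinterval $(c,d)\subset(0,1)$ the Bellman point $x^{(c,d)}=(\av{\varphi_x}{(c,d)},\av{\varphi_x^2}{(c,d)})$ must lie in $\Oe$. I split on the position of $(c,d)$ relative to the central constant piece $(\mu_-,1-\mu_+)$. If $c\ge\mu_-$, then $\varphi_x|_{(c,d)}$ is the restriction of the upper cut-off, at height $0$, of the mirror logarithmic tail, while if $d\le 1-\mu_+$, it is the lower cut-off, at height $0$, of $\ve\log(t/\mu_-)$. Either way, the direct calculation \eqref{co10.6} shows that $\log\in\BMO_1((0,\infty))$ (so the relevant logarithmic tail is in $\BMO_\ve$), and Lemma~\ref{norm} guarantees that a one-sided cut-off does not increase the $\BMO$ norm; hence $x^{(c,d)}\in\Oe$ in this case.

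The remaining case $0<c<\mu_-<1-\mu_+<d<1$ is the crux, and I expect it to be the main obstacle. Here I invoke the decomposition \eqref{co36}--\eqref{co37} at the splitting point $\beta$: the point $x$ is a convex combination of $x^-=x^{(0,\beta)}$, lying on the left bounding tangent $x_2=-2\ve x_1$, and $x^+=x^{(\beta,1)}$, lying on $x_2=2\ve x_1$. The key geometric observation is that $x^{(0,c)}$ is the Bellman point of the restriction of the logarithmic tail $\ve\log(t/\mu_-)$ to an interval of the form $(0,c)$, which by the $c=0$ instance of \eqref{co10.6} lies on the upper parabola $x_2=x_1^2+\ve^2$; in particular it sits strictly above the chord $m$ through $x^-$ and $x^+$. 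Since $x^-$ is a convex combination of $x^{(0,c)}$ and $x^{(c,\beta)}$, the point $x^{(c,\beta)}$ must lie below $m$; the symmetric argument places $x^{(\beta,d)}$ below $m$ as well. Consequently the segment $[x^{(c,\beta)},x^{(\beta,d)}]$ lies below $m$ and inside $\Omega_T(0)\subset\Oe$, and since $x^{(c,d)}$ is a convex combination of its endpoints, $x^{(c,d)}\in\Oe$, as illustrated in Figure~\ref{fig166}. This completes the verification of (b) and proves the lemma.
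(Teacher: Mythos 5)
Your argument is essentially the paper's own proof: parts (a) and (c) come from the generic $T$-block computation preceding the lemma, the easy positions of $(c,d)$ are handled by viewing $\varphi_x$ there as a cut-off of a logarithmic tail (in $\BMO_\ve$ by \eqref{co10.6}) together with Lemma~\ref{norm}, and the remaining case $c<\mu_-<1-\mu_+<d$ is exactly the paper's chord argument, with $x^{(0,c)}$ (and its right-hand counterpart) on the upper parabola forcing $x^{(c,\beta)}$ and $x^{(\beta,d)}$ below the line $m$ through $x^-=x^{(0,\beta)}$ and $x^+=x^{(\beta,1)}$. The only slip is your claim that the segment $[x^{(c,\beta)},x^{(\beta,d)}]$ lies inside $\Omega_T(0)$: these points can fall below the bounding tangents $x_2=\pm2\ve x_1$, so membership in $\Omega_T(0)$ may fail; what the argument actually yields --- and all that is needed, exactly as the paper asserts --- is that the segment lies below $m$ and, consisting of convex combinations of Bellman points, above the lower parabola $x_2=x_1^2$, hence in $\Oe$.
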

\begin{lemma}
\label{ot1.2}
The function~\eqref{co42.2}\textup, with $\nu=e^{1-\xi/\ve}$ and 
$\mu_\pm$ given by~\eqref{co43} for $u=\xi,$ is an optimizer for 
$L_0(x)\longrightarrow F^+(x;\ve,\xi)\longrightarrow T(x;\xi)
\longleftarrow F^-(x;\xi,\infty).$
\end{lemma}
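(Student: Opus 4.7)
The plan is to verify the three conditions in \eqref{co1} for the function $\varphi_x$ defined by \eqref{co42.2}. Conditions (a) (correct averages) and (c) (optimality relation) I would inherit directly from the general construction of Section~\ref{ot}. The function $\varphi_x$ is built as the concatenation \eqref{co36}--\eqref{co37} of an optimizer for $F^+(x;\ve,\xi)$ preceded by $L_0$ (Lemma~\ref{LF+} with $u_1=\ve$ and the chosen $x^-$ on the left tangent) and an optimizer for $F^-(x;\xi,\infty)$ (Lemma~\ref{F-} with the chosen $x^+$ on the right tangent). Because $T(\,\cdot\,;\xi)$ is linear and matches each $F^\pm$ on the corresponding bounding tangent via \eqref{t2.015}, the averaging identity and the value $\av{f(\varphi_x)}{(0,1)}=T(x;\xi)$ follow from the calculation already carried out at the end of Section~\ref{ot}.

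The main obstacle is (b): showing $\|\varphi_x\|_{\BMO}\le\ve$. My first move is to pass to $\tilde{\varphi}_x\df\varphi_x-\xi$, which does not change the $\BMO$ norm. I would then identify $\tilde{\varphi}_x|_{(\mu_-\nu,1)}$, after the obvious reparametrization, with the optimizer \eqref{co42.1} from Lemma~\ref{ot1.1}, which is already in $\BMO_\ve$. Applying Lemma~\ref{norm}, the further restriction $\tilde{\varphi}_x|_{(\mu_-\nu/2,1)}$, being the cut-off of that optimizer at height $\ve-\xi$, is also in $\BMO_\ve$.

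Consequently, for a generic subinterval $(c,d)\subset(0,1)$, only the range $c\in(0,\mu_-\nu/2)$ remains to be examined. If moreover $d\le 1-\mu_+$, I would note that $x^{(c,d)}$ is a Bellman point of the optimizer \eqref{co10.5} for the sequence $L\to F^+$ (now with $u_1=-\xi+\ve$, $u=0$), which lies in $\Oe$ by Lemma~\ref{LF+}. The genuinely new case is $c<\mu_-\nu/2<1-\mu_+<d$, and this is where I expect the real work to sit. I would argue as in the preceding subsection: using the splitting \eqref{co36}--\eqref{co37}, observe that the endpoint contribution $x^{(0,c)}=(-\xi-\ve,(\xi+\ve)^2)$ lies on the lower parabola \emph{above} the tangent line $x_2=-2\ve x_1$, which crucially uses $\xi>\ve$, i.e.\ $\mu>1$, guaranteed by Corollary~\ref{mu}. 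Hence $x^{(0,c)}$ lies above the chord $m$ connecting $x^-=x^{(0,\beta)}$ and $x^+=x^{(\beta,1)}$. Since $x^-$ is a convex combination of $x^{(0,c)}$ and $x^{(c,\beta)}$, the point $x^{(c,\beta)}$ must lie below $m$; a symmetric argument places $x^{(\beta,d)}$ below $m$ as well. As $x^{(c,d)}$ is a convex combination of these two subpoints, it lies on the segment joining them, below $m$ and therefore inside $\Oe$, completing (b).
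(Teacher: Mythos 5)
Your proposal is correct and follows essentially the same route as the paper: parts (a) and (c) are inherited from the general concatenation argument for $T$-blocks, and for (b) you shift by $\xi$, reduce via the optimizer \eqref{co42.1} and Lemma~\ref{norm} to the case $c<\mu_-\nu/2<1-\mu_+<d$, and then run the same geometric argument, with the point $x^{(0,c)}=(-\xi-\ve,(\xi+\ve)^2)$ now on the lower parabola but still above the line $x_2=-2\ve x_1$ because $\xi>\ve$. This is precisely the paper's proof, including the intermediate reduction to the $L\to F^+$ optimizer with $u_1=-\xi+\ve,$ $u=0$ when $d\le 1-\mu_+$.
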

\begin{lemma}
\label{ot1.3}
The function~\eqref{co42.3}\textup, with $\mu_\pm$ given by~\eqref{co43} 
for $u=0,$ is an optimizer for $L^+(x;-\ve)\longrightarrow T_0(x)
\longleftarrow L^-(x;\ve).$
\end{lemma}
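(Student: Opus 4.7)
The plan is to verify the three defining conditions in~\eqref{co1} for the function $\varphi_x$ given by~\eqref{co42.3}. Conditions~(a) and~(c) follow at once from the generic $T$-block analysis at the start of Subsection~\ref{ot}: the weights $\mu_-/2$, $1-(\mu_-+\mu_+)/2$, $\mu_+/2$ attached to the values $-2\ve,0,2\ve$ are precisely those that make $\av{\varphi_x}{(0,1)}=x_1$ and $\av{\varphi_x^2}{(0,1)}=x_2$, and the optimality $\av{f(\varphi_x)}{(0,1)}=T_0(x)$ follows from the linearity of $T_0$ together with its continuous matching against $L^\pm(\cdot;\pm\ve)$ along the two bounding tangents $x_2=\pm 2\ve x_1$ of $\Omega_T(0)$.

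The substance of the proof is condition~(b): I must show that for every subinterval $(c,d)\subset(0,1)$ the Bellman point $x^{(c,d)}=\bigl(\av{\varphi_x}{(c,d)},\av{\varphi_x^2}{(c,d)}\bigr)$ lies in $\Oe$. I would split into cases. When $(c,d)\subset(0,1-\mu_+/2)$, the function $\varphi_x$ takes only the values $-2\ve$ and $0$ on $(c,d)$, so $x^{(c,d)}$ is a convex combination of $(-2\ve,4\ve^2)$ and $(0,0)$ and lies on the tangent segment $x_2=-2\ve x_1$ inside $\Oe$; the symmetric range $(c,d)\subset(\mu_-/2,1)$ is handled identically on the right tangent. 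The delicate case is $0<c<\mu_-/2<1-\mu_+/2<d<1$. Here I would use the splitting parameter $\beta$ of~\eqref{co37}, which must lie in $(\mu_-/2,1-\mu_+/2)$ since $\varphi_x\equiv 0$ there, and observe, by the same two-value argument, that $x^{(c,\beta)}$ lies on the left tangent between the origin and $x^-=x^{(0,\beta)}$, while $x^{(\beta,d)}$ lies on the right tangent between the origin and $x^+$. Since $x^{(c,d)}$ is itself a convex combination of these two points, the task reduces to showing the chord $[x^{(c,\beta)},x^{(\beta,d)}]$ stays in $\Oe$.

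The main obstacle is this last geometric claim, because $\Oe$ is not convex underneath the upper parabola. I would argue it as follows. The parent chord $[x^-,x^+]$ already lies in $\Oe$ by the construction in Subsection~\ref{ot}, where the line through $x$ defining $x^\pm$ was chosen to miss the upper boundary of $\Oe$; an elementary discriminant computation shows that a chord joining $(-s,2\ve s)$ to $(t,2\ve t)$ with $s,t\in[0,\ve]$ stays below the upper parabola $x_2=x_1^2+\ve^2$ if and only if $s+t\le\ve$; and the pair $(s,t)$ corresponding to $x^{(c,\beta)},x^{(\beta,d)}$ is componentwise dominated by the pair corresponding to $x^-,x^+$ (both endpoints have been pulled closer to the origin along their respective tangents), so the inequality is inherited. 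Hence $x^{(c,d)}\in\Oe$, which completes the verification of~(b) and the proof.
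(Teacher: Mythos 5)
Your proof is correct and takes essentially the same route as the paper's: the same easy two-value cases, and, in the remaining case $c<\mu_-/2<\beta<1-\mu_+/2<d$, the same localization of $x^{(c,\beta)}$ and $x^{(\beta,d)}$ on the two tangent segments below $x^-$ and $x^+$, followed by the convex-combination step. Your discriminant criterion $s+t\le\ve$ simply makes explicit the final geometric conclusion that the paper leaves implicit, and it is a valid (indeed welcome) justification of that step.
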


Putting together Lemmas~\ref{oL0}--\ref{ot1.3}, we obtain
\begin{theorem}
\label{imp2}
For any $x\in\Oe,$ we have
$$
\begin{array}{lll}
For~p\ge2:&N_{\ve,p}(x)\le\bel{B}_{\ve,p}(x),
&M_{\ve,p}(x)\ge\bel{b}_{\ve,p}(x),
\\
For~1\le p< 2:&M_{\ve,p}(x)\le\bel{B}_{\ve,p}(x), 
&N_{\ve,p}(x)\ge\bel{b}_{\ve,p}(x),
\\
For~0<p<1:&P_{\ve,p}(x)\le\bel{B}_{\ve,p}(x), 
&R_{\ve,p}(x)\ge\bel{b}_{\ve,p}(x).
\end{array}
$$
\end{theorem}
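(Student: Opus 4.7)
The plan is to combine all the optimizer constructions from Lemmas~\ref{oL0}--\ref{ot1.3} via the observation~\eqref{co2}: if, for each $x\in\Oe$, we can exhibit a function $\varphi_x\in\BMO_\ve((0,1))$ satisfying conditions (a), (b), (c) of~\eqref{co1} with $f(s)=|s|^p$ and with $G$ being the appropriate global candidate from Section~\ref{global}, then by definition of $\bel{B}_{\ve,p}$ as a supremum we get $G(x)\le\bel{B}_{\ve,p}(x)$, and analogously $G(x)\ge\bel{b}_{\ve,p}(x)$ when $G$ is a lower candidate. This yields all six inequalities of the theorem at once.

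First I would treat the six cases in parallel. Fix $x\in\Oe$ and one of the global candidates $N_{\ve,p},M_{\ve,p},P_{\ve,p},R_{\ve,p}$. By the piecewise definitions~\eqref{ff5.9}, \eqref{m2.9}, \eqref{h11}, \eqref{h27}, the point $x$ lies in exactly one canonical sub-domain (or on a shared boundary, in which case either choice works by continuity). Depending on that sub-domain, I would invoke precisely one of the already-proved lemmas: Lemma~\ref{oL0} on $\Omega_L(0)$; Lemma~\ref{LF+} on $\Omega_{F^+}(u_1,u_2)$ with a left $L$ neighbor; Lemma~\ref{F+} on $\Omega_{F^+}(-\infty,u_2)$; Lemmas~\ref{LF-} and~\ref{F-} for the symmetric $F^-$ situations; and Lemmas~\ref{ot1.1}, \ref{ot1.2}, \ref{ot1.3} on the three flavors of $\Omega_T$-neighborhoods that actually occur in our global foliations. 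In every case the lemma produces a $\varphi_x$ for which (a), (b), (c) of~\eqref{co1} hold with $G$ equal to the restriction of the global candidate to the corresponding block.

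The key point is that conditions (a) and (c) alone only tell us $\av{|\varphi_x|^p}{(0,1)}=G(x)$ for one particular interval; what we need for the Bellman-function definitions \eqref{b23}, \eqref{b24} is that $\varphi_x\in\BMO_\ve((0,1))$, i.e.\ that \emph{every} Bellman point of $\varphi_x$ over \emph{every} sub-interval of $(0,1)$ lies in $\Oe$. This is precisely condition (b), and it is already verified inside each lemma (the figures and the case analyses in \ref{ot1.1}--\ref{ot1.3} do exactly this work, often via Lemma~\ref{norm} to cut off and reduce to a logarithmic or constant model). So no additional estimation is needed here; the content has been discharged lemma by lemma.

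With $\varphi_x$ in hand, the conclusion is immediate. For upper candidates $G\in\{N_{\ve,p}\ (p\ge2),\ M_{\ve,p}\ (1\le p<2),\ P_{\ve,p}\ (0<p<1)\}$, since $\varphi_x$ is admissible in the supremum~\eqref{b23},
\[
G(x)=\av{|\varphi_x|^p}{(0,1)}\;\le\;\bel{B}_{\ve,p}(x).
\]
For lower candidates $G\in\{M_{\ve,p}\ (p\ge2),\ N_{\ve,p}\ (1\le p<2),\ R_{\ve,p}\ (0<p<1)\}$, $\varphi_x$ is admissible in the infimum~\eqref{b24}, giving
\[
G(x)=\av{|\varphi_x|^p}{(0,1)}\;\ge\;\bel{b}_{\ve,p}(x).
\]
Combining all six cases yields the statement of the theorem. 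The only step that required real work was condition (b)---that the constructed $\varphi_x$ stays in the $\ve$-ball of $\BMO$---and that is exactly what the geometric Bellman-point arguments in the previous subsections were designed to establish, so at this stage the proof is simply an assembly.
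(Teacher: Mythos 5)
Your proposal is correct and follows the paper's own route exactly: the paper proves Theorem~\ref{imp2} precisely by assembling the block-by-block optimizers of Lemmas~\ref{oL0}--\ref{ot1.3} (including the unlabeled lemma for $L^\pm(x;a)$, $a\ne0$, needed for the $\Omega_L(\pm\ve)$ pieces of $R_{\ve,p}$) and invoking the implication \eqref{co1}$\Rightarrow$\eqref{co2}, since an admissible $\varphi_x\in\BMO_\ve$ with the prescribed averages and $\av{|\varphi_x|^p}{(0,1)}=G(x)$ immediately bounds the supremum \eqref{b23} from below and the infimum \eqref{b24} from above. No further work is needed beyond what those lemmas already establish, which is exactly how the paper presents it.
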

\section{Proofs of the main inequalities}
\label{proofs}
Theorems~\ref{imp1} and~\ref{imp2} give us the explicit expressions 
for $\bel{B}_{\ve,p}(x)$ and $\bel{b}_{\ve,p}(x)$ for all $p>0:$
\begin{theorem}
\label{imp3}
For any $x\in\Oe,$ we have
$$
\begin{array}{lll}
For~p\ge2:&\bel{B}_{\ve,p}(x)=N_{\ve,p}(x),
&\bel{b}_{\ve,p}(x)=M_{\ve,p}(x),
\\
For~1\le p< 2:&\bel{B}_{\ve,p}(x)=M_{\ve,p}(x),
&\bel{b}_{\ve,p}(x)=N_{\ve,p}(x),
\\
For~0<p<1:&\bel{B}_{\ve,p}(x)=P_{\ve,p}(x),
&\bel{b}_{\ve,p}(x)=R_{\ve,p}(x).
\end{array}
$$
\end{theorem}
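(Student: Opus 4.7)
The plan is extremely short: the theorem is just the conjunction of Theorems~\ref{imp1} and~\ref{imp2}, which together give matching two-sided inequalities between each Bellman function and the corresponding candidate. So the proof is purely bookkeeping; there is no new content beyond combining what has already been established.

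More precisely, I would proceed as follows. Fix $p>0$ and $x\in\Oe$. Theorem~\ref{imp1}, proved via Bellman induction on pseudo-dyadic scales through Lemma~\ref{l2c}, asserts that each locally concave super-solution with the correct boundary data is a pointwise majorant of $\bel{B}_{\ve,p}$, and each locally convex sub-solution with the correct boundary data is a pointwise minorant of $\bel{b}_{\ve,p}$. Theorem~\ref{imp2}, proved by gluing the canonical optimizers constructed in Lemmas~\ref{oL0}--\ref{ot1.3}, gives the reverse inequalities, since each optimizer $\varphi_x$ satisfies $\varphi_x\in\BMO_\ve((0,1))$, $(\av{\varphi_x}{(0,1)},\av{\varphi_x^2}{(0,1)})=x$, and $\av{|\varphi_x|^p}{(0,1)}$ equals the value of the relevant candidate at $x$, which immediately yields the supremum/infimum bounds from definitions~\eqref{b23} and~\eqref{b24}.

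Combining the two, I would read off the three cases: for $p\ge 2$, $N_{\ve,p}(x)\le\bel{B}_{\ve,p}(x)\le N_{\ve,p}(x)$ and $M_{\ve,p}(x)\le\bel{b}_{\ve,p}(x)\le M_{\ve,p}(x)$; for $1\le p<2$, the same chain with $M$ and $N$ interchanged on the upper/lower sides; for $0<p<1$, the analogous chain with $P$ and $R$ in place of $M$ and $N$. Each chain collapses to the stated equality. There is no genuine obstacle left at this stage; the hard work was front-loaded into the concavity/convexity verifications of Lemma~\ref{conc}, the induction argument of Lemma~\ref{l2c}, and the explicit verification that each proposed optimizer in Section~\ref{converse} really lies in $\BMO_\ve$ and hits the candidate's value. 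The only subtlety worth flagging is that Theorem~\ref{imp1} was stated using the auxiliary parameter $\delta>\ve$ and then letting $\delta\to\ve$; since the candidates are continuous in $\ve$, this limiting step has already been absorbed into the statement of Theorem~\ref{imp1} and requires no repetition here.
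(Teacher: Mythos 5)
Your proposal is correct and is exactly how the paper obtains Theorem~\ref{imp3}: the paper states it as the immediate conjunction of Theorem~\ref{imp1} (Bellman induction giving one-sided bounds) and Theorem~\ref{imp2} (optimizers giving the reverse bounds), with the $\delta\to\ve$ limit already handled inside Theorem~\ref{imp1}. Nothing further is needed.
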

We are now in a position to prove all the theorems stated in 
Section~\ref{inequalities}.
\begin{proof}[Proof of Theorem~\ref{th1.0}]
It suffices to consider $Q=(0,1).$ Take any $\varphi\in\BMO(Q)$ and let $\ve=\|\varphi\|_{\BMO(Q)},$ $x_1=\av{\varphi}Q,$ $x_2=
\av{\varphi^2}Q.$ Then
$$
\bel{b}_{\ve,p}(x_1,x_2)\le\av{|\varphi|^p}Q
\le\bel{B}_{\ve,p}(x_1,x_2).
$$
Replacing $\varphi$ with $\varphi-\av{\varphi}Q$ gives
\eq[ee1]{
\bel{b}_{\ve,p}(0,x_2-x_1^2)\le\av{|\varphi-\av{\varphi}Q|^p}Q
\le\bel{B}_{\ve,p}(0,x_2-x_1^2).
}
We now invoke Theorem~\ref{imp3}, for which we need the exact expressions for candidates $M,N,P,$ and $R.$ They come, respectively, from~\eqref{ff6}, \eqref{m3}, \eqref{h12}, and~\eqref{h28}:
$$
M_{\ve,p}(0,x_2)=x_2^{p/2};~ 
N_{\ve,p}(0,x_2)=\frac p2\Gamma(p)\ve^{p-2}x_2;~ 
P_{\ve,p}(0,x_2)=x_2^{p/2};~
R_{\ve,p}(0,x_2)=2^{p-2}\ve^{p-2}x_2.
$$
Plugging these into~\eqref{ee1} yields the stated inequalities. Furthermore, these inequalities are sharp because each becomes an equality, 
if we take $\varphi$ to be the corresponding optimizer 
$\varphi_{(0,\ve^2)}$ from Section~\ref{converse}.  Specifically, let
\eq[v1]{
\varphi_1=
\begin{cases}
-\ve;&\!\!\!t\in\big(0,\frac12\big),\\
\ve;&\!\!\!t\in\big(\frac12,1\big),
\end{cases}
\quad
\varphi_2=
\begin{cases}
\ve\log(4t),&\!\!\!t\in\big(0,\frac14\big),\\
0,&\!\!\!t\in\big(\frac14,\frac34\big),\\
-\ve\log(4-4t),&\!\!\!t\in\big(\frac34,1\big),
\end{cases}
\quad
\varphi_3=
\begin{cases}
-2\ve,&\!\!\!t\in\big(0,\frac18\big),\\
0,&\!\!\!t\in\big(\frac18,\frac78\big),\\
2\ve,&\!\!\!t\in\big(\frac78,1\big).
\end{cases}
}
These are the optimizers given by~\eqref{ol01}, \eqref{co42.1}, 
and~\eqref{co42.3}, respectively, each constructed for the point 
$(0,\ve^2).$ As was shown in Section~\ref{converse},
$\|\varphi_k\|_{\BMO(Q)}=(\av{\varphi^2}Q-\av{\varphi}Q^2)^{1/2}=\ve,$ $k=1,2,3.$ On the other hand,
$$
\av{|\varphi_1-\av{\varphi_1}Q|^p}Q=\av{|\varphi_1|^p}Q=\ve^p,
$$
$$
\av{|\varphi_2-\av{\varphi_2}Q|^p}Q=\av{|\varphi_2|^p}Q=
2\ve^p\cdot\frac14\int_0^1\!\!|\log t|^p\,dt=\frac12\Gamma(p+1)\ve^p,
$$
and
$$
\av{|\varphi_3-\av{\varphi_3}Q|^p}Q=\av{|\varphi_3|^p}Q
=2\cdot\frac18\cdot(2\ve)^p=2^{p-2}\ve^p.
$$
\end{proof}
\begin{proof}[Proof of Theorem~\ref{th1.1}]
Again, set $Q=(0,1).$ Take any $\varphi\in\BMO(Q)$ and let $\ve=\|\varphi\|_{\BMO(Q)}.$ For 
any subinterval $J$ of $Q$ let $x^J_1=\av{\varphi}J,$ $x^J_2=
\av{\varphi^2}J.$ Arguing as in the previous proof, we have
$$
\bel{b}_{\ve,p}(0,x^J_2-(x^J_1)^2)\le\av{|\varphi-\av{\varphi}J|^p}J
\le\bel{B}_{\ve,p}(0,x^J_2-(x^J_1)^2).
$$

Let us consider these inequalities separately. For the one on the right, 
using that $\bel{B}_{\ve,p}(0,\cdot)$ is increasing, we have
$$
\av{|\varphi-\av{\varphi}J|^p}J\le\bel{B}_{\ve,p}(0,\ve^2).
$$
Taking the supremum over all $J,$ we get
$$
\|\varphi\|^p_{\BMO^p(Q)}\le\bel{B}_{\ve,p}(0,\ve^2).
$$
For each $p$ this inequality is sharp: for $p<2$ it is attained for $\varphi=\varphi_1$ from~\eqref{v1}, and for $p>2$ it is attained for $\varphi=\varphi_2.$ 

For the inequality on the left, take a sequence $\{J_n\}$ of 
subintervals of $Q$ such that
$$
\lim_{n\to\infty}\big(x^{J_n}_2-(x_1^{J_n})^2\big)=\ve^2.
$$
Using the continuity of $\bel{b}_{\ve,p}(0,\cdot),$ we have
$$
\bel{b}_{\ve,p}(0,\ve^2)
=\lim_{n\to\infty}\bel{b}_{\ve,p}\big(0,x^{J_n}_2-(x_1^{J_n})^2\big)
\le\limsup_{n\to\infty}\av{|\varphi-\av{\varphi}{J_n}|^p}{J_n}
\le\|\varphi\|^p_{\BMO^p(Q)}.
$$
This inequality is sharp for $p>2:$ it is attained, again, for $\varphi=\varphi_1$ from~\eqref{v1}.
\end{proof}
\begin{remark}
The last calculation in the proof shows why $\varphi_2$ and $\varphi_3$ cannot be used to show sharpness of the norm estimates in the cases $1\le p<2$ and $0<p\le 1,$ respectively. 
Consider $p=1.$ The issue is that, while each function attains its $BMO^2$ norm on $(0,1),$ neither attains its $\BMO^1$ norm on this interval. Indeed, one can easily calculate that both functions have $1$-oscillations equal to $\ve/2$ on $(0,1).$
However,
$$
\|\varphi_2\|_{\BMO^1}\ge\av{|\varphi_2-\av{\varphi_2}{(0,1/4)}|}{(0,1/4)}=\frac{2\ve}e,\quad \|\varphi_3\|_{\BMO^1}\ge\av{|\varphi_3-\av{\varphi_3}{(0,1/4)}|}{(0,1/4)}=\ve.
$$
\end{remark}
\begin{proof}[Proof of Theorem~\ref{th2.1}]
We can set $Q=(0,1).$ If $\|\varphi\|_{\BMO(Q)}=0,$ there is nothing to prove. Assuming this is not the case, we use
Theorem~\ref{th1.0} to get
$$
\frac{2\|\varphi\|^{2-p_2}_{\BMO(Q)}}{p_2\Gamma(p_2)}\av{|\varphi-\av{\varphi}Q|^{p_2}}Q\le
\av{\varphi^2}Q-\av{\varphi}Q^2\le
\frac{2\|\varphi\|^{2-p_1}_{\BMO(Q)}}{p_1\Gamma(p_1)}\av{|\varphi-\av{\varphi}Q|^{p_1}}Q.
$$
Each of these inequalities becomes an equality when $\varphi=\varphi_2$ 
from \eqref{v1}. The left-hand side inequality in the statement of the theorem is attained, for instance, for $\varphi=\varphi_1.$
\end{proof}
\begin{proof}[Proof of Theorem~\ref{th3}]
Set $Q=(0,1);$ all averages will be over $Q.$ Let 
$\ve=\|\varphi\|_{\BMO(Q)}$ and assume $\ve<1.$ We have
\begin{align*}
\ave{e^{|\varphi-\ave{\varphi}|}}
&=\sum_{k=0}^\infty\frac1{k!}\ave{|\varphi-\ave{\varphi}|^k}
\le1+\bel{B}_{\ve,1}(0,\ve^2)
+\sum_{k=2}^\infty\frac1{k!}\bel{B}_{\ve,k}(0,\ve^2)
\\
&=1+M_{\ve,1}(0,\ve^2)+\sum_{k=2}^\infty\frac1{k!}N_{\ve,k}(0,\ve^2)
=1+\ve+\frac12\sum_{k=2}^\infty\ve^k=\frac{1-\frac{\ve^2}2}{1-\ve}.
\end{align*}

On the other hand, taking $\varphi=\varphi_2$ from \eqref{v1}, we get
$$
\ave{e^{|\varphi_2-\ave{\varphi_2}|}}
=2\cdot\frac14\int_0^1\!\!e^{\ve|\log t|}\,dt+2\cdot\frac14
=\frac{1-\frac\ve2}{1-\ve}.
$$
This calculation also shows that the bound $\ve_0=1$ is sharp.
\end{proof}
\section{Other choices of $f$}
\label{other}
Throughout the paper, we have concentrated on one specific boundary 
function $f:$ $f(s)=|s|^p,$ $p>0.$ However, the machinery developed
in these pages works for many other choices of $f.$ Let us briefly 
describe several such choices and their \Bfs\ without going into details.
\subsection{$f(s)=\log|s|$}
As mentioned earlier, this function corresponds to the case $p=0,$ since
$$
\lim_{p\to0}\ave{|\varphi|^p}^{1/p}=e^{\ave{\log|\varphi|}}.
$$
It is easy to show that the corresponding \Bfs\ are
\eq[01]{
\bel{B}_{\ve,0}(x)=P_{\ve,0}(x)\quad\text{and}\quad \bel{b}_{\ve,0}(x)=-\infty.
}
Here $P_{\ve,0}$ is given by \eqref{h11}, with every block re-specified 
for $f(s)=\log|s|.$ Thus, $F^+$ and $F^-$ are given by~\eqref{gg4.0} 
and~\eqref{gg4.01}, respectively; $T$ is given by~\eqref{t4}; and $L_0,$ 
given by~\eqref{g1}, is simply $\frac12\log x_2.$ To show that this is 
a viable global candidate, one needs Lemma~\ref{lm}. To prove the 
statement for $\bel{B},$ use the local concavity of the candidate 
$P_{\ve,0}$ to run the induction of Section~\ref{induction} and then 
apply the optimizer for $P_{\ve,p}$ from Section~\ref{converse}. To 
prove the statement for $\bel{b},$ simply use the optimizer for 
$R_{\ve,p}$ from Section~\ref{converse}.

From \eqref{01}, we have sharp inequalities for $\varphi\in\BMO_\ve:$
\eq[01.5]{
-\infty\le\ave{\log|\varphi|}\le P_{\ve,0}(\ave{\varphi},\ave{\varphi}^2),
}
and so
\eq[01.6]{
0\le e^{\ave{\log|\varphi-\ave{\varphi}|}}\le 
e^{P_{\ve,0}(0,\ave{\varphi^2}-\ave{\varphi}^2)}
=e^{\frac12\log(\ave{\varphi^2}-\ave{\varphi}^2)}\le\ve.
}
While the second inequality in~\eqref{01.5} is non-trivial and gives 
a sharp estimate on $\ave{\log|\varphi|}$ for any pair of specified 
averages of $\varphi,$ its immediate consequence, the second-from-left 
inequality in~\eqref{01.6}, is not interesting, as it simply expresses 
the norm monotonicity~\eqref{d0.2}. However, the leftmost inequality 
in~\eqref{01.6} is important, as its sharpness means that 
$\BMO\subsetneq\BMO^0.$ It is also the same result as one gets from 
the top line in Theorem~\ref{th1.1}, by taking the limit as $p\to0^+.$
\subsection{$f(s)=|s|^p,$ $p<0$}
For this case, the situation reverses, compared to the previous one. 
We have
$$
\bel{B}_{\ve,p}(x)=\infty,\qquad \bel{b}_{\ve,p}(x)=P_{\ve,p}(x),
$$
with $P_{\ve,p}$ given by~\eqref{h11} and so~\eqref{h12}, and so
$$
P_{\ve,p}(\ave{\varphi},\ave{\varphi}^2)\le\ave{|\varphi|^p}\le\infty,
$$
which produces the sharp inequalities
$$
0\le\ave{|\varphi-\ave{\varphi}|^p}^{1/p}
\le\big[P_{\ve,p}(0,\ave{\varphi}^2-\ave{\varphi}^2)\big]^{1/p}
=(\ave{\varphi}^2-\ave{\varphi}^2)^{1/2}\le\ve.
$$
\subsection{$f(s)=e^{|s|}-|s|,$ $\ve<1$} This is the function that 
implicitly allowed us to prove the John--Nirenberg estimates of 
Theorem~\ref{th3}. As we saw in Section~\ref{proofs}, the key fact 
is that all upper \Bfs\ $\bel{B}_{\ve,p}$ and, separately, all 
lower \Bfs\ $\bel{b}_{\ve,p}$ have identical optimizers for $p\ge2.$ 
Since the Taylor expansion for $e^{|s|}-|s|$ is missing the term 
corresponding to $p=1,$ we expect the Bellman foliation of $\Oe$ to be 
the same as for $|s|^p,$ $p\ge2$. Indeed, we can easily show that
\eq[03]{
\bel{B}_{\ve,f}(x)=N_{\ve,f}(x),\qquad \bel{b}_{\ve,f}(x)=M_{\ve,f}(x),
}
where $M_{\ve,f}$ and $N_{\ve,f}$ are given by~\eqref{ff5.9} 
and~\eqref{m2.9}, with their blocks re-specified for this choice of 
$f.$ Therefore, after a small bit of calculation, we have the sharp 
inequalities
$$
e^{\sqrt{\ave{\varphi^2}-\ave{\varphi}^2}}
-\sqrt{\ave{\varphi^2}-\ave{\varphi}^2}
\le\ave{e^{|\varphi-\ave{\varphi}|}}-\ave{|\varphi-\ave{\varphi}|}
\le\frac{\ave{\varphi^2}-\ave{\varphi}^2}{2(1-\ve)}+1
\le\frac{1-\ve+\frac{\ve^2}2}{1-\ve}.
$$
In fact, if $f$ is any linear combination of powers greater than or 
equal to $2,$ with non-negative coefficients, the \Bfs\ will be given 
by~\eqref{03}. If, on the other hand, $f$ is a linear combination of 
powers between $1$ and $2$ with non-negative coefficients, the upper 
and lower \Bfs\ will switch, i.e. they will be given by $M_{\ve,f}$ 
and $N_{\ve,f},$ respectively. Further generalizations along these 
lines are possible.

\end{document}